\definecolor{Purple}{rgb}{.7,0.08,0.6} 
\theoremstyle{plain}
\newtheorem{Thm}{Theorem}
\newtheorem{Cor}{Corollary}
\newtheorem{Lem}{Lemma}
\newtheorem{Prop}{Proposition}
\theoremstyle{Definition}
\newtheorem{Def}{Definition}
\newtheorem{Ex}{Example}
\newtheorem{Rmrk}{Remark}
\newtheorem{BP}{Basic Property}
\newcommand{\dee}{\partial}
\newcommand{\deebar}{\overline\partial}
\renewcommand{\)}{]}
\newcommand{\B}{B}
\newcommand{\bndry}{b}
\newcommand{\D}{\mathcal D}
\newcommand{\Bop}{\mathbf B}
\newcommand{\g}{g}
\newcommand{\F}{F}
\newcommand{\lam}{\zeta}
\renewcommand{\H}{H}
\newcommand{\s}{\mathcal S}
\newcommand{\Sz}{C}
\newcommand{\V}{V}
\numberwithin{equation}{section}
\begin{document}
\title[Cauchy-type Integrals]{Cauchy-type integrals in several complex variables\footnote{This is an electronic reprint of the original article published in the Bulletin of the Mathematical Sciences, vol. 3 (2) (2013) 241-285. This reprint differs from the original in pagination and typographical detail.}}
\author[Lanzani and Stein]{Loredana Lanzani$^*$
and Elias M. Stein$^{**}$}
\thanks{$^*$ Supported in part
by the National Science Foundation, award DMS-1001304}
\thanks{$^{**}$ Supported in part by the National Science Foundation, award DMS-0901040, and by KAU of Saudi Arabia} 
\address{
Dept. of Mathematics,       
University of Arkansas 
Fayetteville, AR 72701}
\address{
Dept. of Mathematics\\Princeton University 
\\Princeton, NJ   08544-100 USA }
\smallskip

  \email{loredana.lanzani@gmail.com,\; stein@math.princeton.edu}
\thanks{2000 \em{Mathematics Subject Classification:} 32A36, 32A50, 
42B, 31B}
\begin{abstract} 
We present the theory of Cauchy-Fantappi\'e integral operators,
with emphasis on the situation when the
domain of integration, $D$, has minimal boundary regularity.
Among these operators we focus on those that  are more closely related to
 the classical Cauchy integral for a planar domain,
whose kernel is a holomorphic function of the parameter $z\in D$.
The goal is to prove 
$L^p$ estimates for these operators and, as a consequence, to obtain $L^p$ estimates for the canonical Cauchy-Szeg\"o and Bergman projection operators (which are not of Cauchy-Fantappi\'e type).
\end{abstract}
\maketitle
\section{Introduction}

The purpose of this survey is to study Cauchy-type integrals in several complex variables and to announce new results concerning these operators.
While this is a broad field with a very wide literature, our exposition will be focused more narrowly
on achieving the following goal: the construction of such operators and the establishment of their
$L^p$ mapping properties under ``minimal'' conditions of smoothness of the boundary of the domain $D$ in question.\\

The operators we study are of three interrelated kinds: Cauchy-Fantappi\' e integrals with holomorphic kernels, Cauchy-Szeg\"o projections and Bergman projections. In the case of one complex variable, what happens is by now well-understood. Here the minimal smoothness that can be achieved is ``near'' $C^1$ (e.g., the case of a Lipschitz domain). However when the complex dimension is greater than 1 the nature of the Cauchy-Fantappi\' e kernels brings in 
considerations of pseudo-convexity (in fact strong pseudo-convexity) and these in turn imply that the limit of smoothness
should be ``near'' $C^2$.\\

We establish $L^p$-regularity for one or more of these operators in the following contexts:

\begin{itemize}
\item When $D$ is strongly pseudo-convex and of class $C^2$\,;
\item When $D$ is strongly $\mathbb C$-linearly convex and of class $C^{1, 1}$
\end{itemize}

with $p$ in the range $1<p<\infty$. See theorems \ref{T:CL-bdd-smooth} - 
\ref{T:CL-bdd-C11} for the precise statements.\\

This survey is organized as follows. In Section \ref{S:planar} we briefly review the situation of one complex variable. Section \ref{S:CI-n} is devoted to a few generalities  about Cauchy-type integrals when $n$, the complex dimension of the ambient space, is greater than 1. The Cauchy-Fantappi\' e forms are taken up in Section \ref{S:CF-n} and the corresponding Cauchy-Fantappi\'e integral operators are set out in Section \ref{S:poten-th}. Here we adapt the standard treatment
in \cite[Chapt. IV]{Ra}, but our aim is to show that these methods apply when the so-called generating form is merely of class $C^1$ or even Lipschitz, as is needed in what follows.

The Cauchy-Fantappi\'e integrals constructed up to that point may however lack the basic requirement of producing holomorphic functions, whatever the given data is. In other words, the kernel of the operator may fail to be holomorphic in the free variable $z\in D$. To achieve 
the desired holomorphicity requires that the domain $D$ be pseudo-convex, and two specific forms of this property, {\em strong pseudo-convexity} and {\em strong $\mathbb C$-linear convexity} are discussed in Section \ref{S:convexity}.

There are several approaches to obtain the required holomorphicity when the domain is sufficiently smooth and strongly pseudo-convex.The initial methods are due to Henkin 
\cite{Hen-1}-\cite{Hen-2} and Ramirez \cite{R}; a later approach is in Kerzman-Stein \cite{KS}, which is the one we adopt here.
It requires to start with a ``locally'' holomorphic kernel, and then to add a correction term obtained by solving a $\deebar$-problem. These matters are discussed in Section \ref{S:local-holom}-\ref{S:global-hol}. One should note that in the case of strongly $\mathbb C$-linearly convex domains, the Cauchy-Leray integral given here requires no correction. So among all the integrals of Cauchy-Fantappi\' e type associated to such domains, the Cauchy-Leray integral is the unique and natural operator
 that most closely resembles the classical Cauchy integral
from one complex variable.

The main $L^p$ estimates
 for the Cauchy-Leray integral and the Szeg\"o and Bergman projections 
 (for $C^2$ boundaries) 
 are the subject of a series of forthcoming papers; in Section 
\ref{S:main} we limit
ourselves to 
 highlighting the main points of interest in the proofs. 
For the last two operators, the $L^p$ results are
 consequences of estimates that hold for the corrected 
Cauchy-Fantappi\'e kernels, denoted $\Sz_\epsilon$ and $\B_\epsilon$, that involve also their respective adjoints. Section \ref{S:last} highlights a further result concerning the Cauchy-Leray
integral, 
also to appear in a separate paper: the corresponding $L^p$ theorem under the 
weaker assumption that the boundary is merely of class $C^{1,1}$.\\

A survey of this kind must by the nature of the subject be far from complete. Among matters not 
covered here are $L^p$ results for the Szeg\"o and Bergman projection and for the Cauchy-Leray integral for other special domains
 (in particular, with more regularity). For these, see e.g. \cite{B}, \cite{B1}, \cite{BaLa}, \cite{BeLi},\cite{BoLo}, \cite{CD}, \cite{EL}, \cite{F}, \cite{H}, \cite{KP}, \cite{MS-1}, \cite{MS-2}, \cite{PS}, \cite{Z}. It is to be noted that several among these works depend in the main on good estimates or explicit formulas for the Szeg\"o or Bergman kernels. In our situation these are unavailable, and instead we have to proceed via the Cauchy-Fantappi\' e framework.\\

A few words about notation:
\begin{itemize}
\item[]  
 Euclidean volume measure for $\mathbb C^n \equiv \mathbb R^{2n}$ ($n\geq 1$) will be denoted $dV$.
\smallskip

\item[]  The notation $\bndry D$ will indicate the boundary of a domain $D\subset \mathbb C^n$ ($n\geq 1$) and, for $D$ sufficiently smooth, 
$d\sigma$ will denote arc-length ($n=1$) or Euclidean surface measure ($n\geq 2$).
\end{itemize}
\bigskip

\noindent {\em Aknowledgment.}\ We wish to express our gratitude to David Barrett, for helpful conversations, and to Annalisa Calini for producing the illustrations in sections \ref{S:correction} and \ref{S:global-hol}.
\bigskip

\section{The Case $n=1$}\label{S:planar}

In the case of one complex dimension the problem of $L^p$ estimates has a long and illustrious history.
Let us review it briefly. (Some details can be found in \cite{D}, \cite{He}, \cite{LS-1}, which contain further citations to the literature.) \\

Suppose $D$ is a bounded domain in $\mathbb{C}$ whose boundary $\bndry D$ is a rectifiable curve. Then the {\em Cauchy integral} is given by
\begin{equation*}
   \mathbf \Sz(f)(z) = \int_{\bndry D}\limits \!\! f(w)\, C(w, z) \ , \ \text{ for } \ z \in D 
\end{equation*}
where $C(w, z)$ is the {\em Cauchy kernel}
\begin{equation*}
C(w, z) = \frac{1}{2\pi i}\,\frac{dw}{w-z}
\end{equation*}
When $D$ is the unit disc, then a classical theorem of M. Riesz says that the mapping $f \mapsto \mathbf \Sz(f) |_{\bndry D}$, defined initially for $f$ that are (say) smooth, is extendable to a bounded operator on $L^p (\bndry D )$, for $1 < p < \infty$. Very much the same result holds when the boundary of $D$ is of class $C^{1 + \epsilon}$, with $\epsilon > 0$, (proved either by approximating to the result when $D$ is the unit disc, or adapting one of the several methods of proof used in the classical case). However in the limiting case when $\epsilon = 0$, these ideas break down and new methods are needed. The theorems proved by Calder\'{o}n, Coifman, McIntosh, Meyers and David (between 1977-1984) showed that the corresponding $L^p$ result held in the following list of increasing generality: the boundary is of class $C^1$; it is Lipschitz (the first derivatives are merely bounded and not necessarily continuous); it is an \textquotedblleft Ahlfors-regular\textquotedblright \ curve.\\

We pass next to the Cauchy-Szeg\"o projection $\mathbf S$, the corresponding orthogonal projection with respect to the Hilbert space structure of $L^2 (\bndry D )$. In fact when $D$ is the unit disc, the two operators $\mathbf \Sz$ and $\mathbf S$ are identical. Let us now restrict our attention to the case when $D$ is simply connected and when its boundary is Lipschitz. Here a key tool is the conformal map $\Phi : \mathbb D \rightarrow D$, where $\mathbb D$ is the unit disc. We consider the induced correspondence $\tau$ given by $\displaystyle \tau (f)(e^{i \theta}) = 
( \Phi^\prime (e^{i \theta }))^{\frac{1}{2}} f(\Phi (e^{i \theta}))$, and the fact that $S = \tau^{-1} S_0 \tau$, where $S_0$ is the Cauchy-Szeg\"o projection for the disc $\mathbb D$. Using ideas of Calder\'{o}n, Kenig, Pommerenke and others, one can show that $| \Phi^\prime |^r$ belongs to the Muckenhaupt class $A_p$, with $r=1 - p/2$, from which one gets the following. As a consequence, if we suppose that $\bndry D$ has a Lipschitz bound $M$, then $\mathbf S$ is bounded on $L^p$, whenever
\begin{itemize}
\item $1 < p < \infty$, if $\bndry D$ is in fact of class $C^1$.
   \item $p^\prime_M < p < p_M$. Here $p_M$ depends on $M$, but $p_M > 4$, and $p^\prime_M$ is the exponent dual to $p_M$.
\end{itemize}
\indent There is an alternative approach to the second result that relates the Cauchy-Szeg\"o projection $\mathbf S$ to the Cauchy integral $\mathbf \Sz$. It is based on the following identity, used in \cite{KS}
\begin{equation}\label{Eq:1}
   \mathbf S(I-\mathbb A) = \mathbf \Sz \ , \ \ \text{ where} \ \mathbb A = \mathbf \Sz^\ast - \mathbf \Sz .
\end{equation} 
\indent There are somewhat analogous results for the Bergman projection in the case of one complex dimension. We shall not discuss this further, but refer the reader to the papers cited above.

\bigskip

\section{Cauchy integral in $\mathbb C^n$, $n>1$; some generalities}\label{S:CI-n}
We shall see that a very different situation occurs when trying to extend the results of Section \ref{S:planar} to higher dimensions. Here are some new issues that arise when $n > 1$:
\begin{enumerate}
   \item[\tt{i.}] There is no \textquotedblleft universal\textquotedblright \ holomorphic Cauchy kernel associated to a domain $D$.
\item[\tt{ii.}] Pseudo-convexity of $D$, must, in one form or another, play a role.
\item[\tt{iii.}] Since this condition involves (implicitly) two derivatives, the \textquotedblleft best\textquotedblright \ results are to be expected \textquotedblleft near\textquotedblright \ $C^2$, (as opposed to near $C^1$ when $n=1$).
\end{enumerate}
\indent In view of the non-uniqueness of the Cauchy integral (and its problematic existence), it might be worthwhile to set down the minimum conditions that would be required of candidates for the Cauchy integral. We would want such an operator $\mathbf \Sz$ given in the form
\begin{equation*}\label{Eq:2}
   \mathbf \Sz(f)(z) = \int_{\bndry D}\limits\!\! f(w)\,C(w, z), \quad z \in D ,
\end{equation*} 
to satisfy the following conditions:\\
\begin{enumerate}
   \item[(a)] The kernel $C(w, z)$ should be given by a \textquotedblleft natural\textquotedblright \ or explicit formula (at least up to first approximation) that involves $D$. 
   \item[]
\item[(b)] The mapping $f \mapsto \mathbf \Sz(f)$ should reproduce holomorphic functions. In particular if $f$ is continuous in $\overline{D}$ and holomorphic in $D$ then $\mathbf \Sz(f)(z) = f(z)$, for $z \in D$.
\item[]
\item[(c)] $\mathbf \Sz(f)(z)$ should be holomorphic in $z \in D$, for any given $f$ that is continuous on $\bndry D$.
\item[]
\end{enumerate}
\indent Now there is a formalism (the Cauchy-Fantappi\'{e} formalism of Fantappi\'{e} (1943), Leray, and Koppleman (1967)), which provides Cauchy integrals satisfying the requirements (a) and (b) in a general setting. Condition (c) however, is more problematic, even when the domain is smooth. Constructing such Cauchy integrals has been carried out only in particular situations, (see below).

\bigskip

\section{Cauchy-Fantappi\'e Theory in higher dimension}\label{S:CF-n}

The Cauchy-Fantappi\'e formalism that realizes the kernel $C(w, z)$ revolves around the notion of {\em generating form}:
these are a class of differential forms of type $(1, 0)$ in the variable of integration whose coefficients may depend on two sets of variables ($w$ and $z$),
and we will accordingly write
\begin{equation*}
\eta (w, z) \, =\, \sum\limits_{j=1}^n\eta_j(w, z)\,dw_j\quad  \mbox{with}\ (w, z)\
\in\  U\times V
\end{equation*}
to designate such a form. The precise definition is given below, where the notation
\begin{equation*}\label{D:pairing}
\langle\eta (w, z), \xi\rangle = \sum\limits_{j=1}^{n} \eta_j(w, z)\,\xi_j.
\end{equation*}
is used to indicate the action of $\eta$ on the vector $\xi\in \mathbb C^n$.
\begin{Def}\label{D:gen-form}
The form
$\eta (w, z)$
is
{\em generating at $z$ relative to $V$}
 if there is an open set 
$$U_z\subseteq\mathbb C^n\setminus\{z\}$$
 such that 
\begin{equation}\label{D:generating-2}
\bndry V\subset U_z
\end{equation}
and, furthermore
\begin{equation}\label{D:generating-1}
\langle\eta (w, z), w-z\rangle = \sum\limits_{j=1}^{n} \eta_j(w, z)\,(w_j-z_j)\equiv 1\quad \text{for any}\ \ w\in 
 U_z.
\end{equation}
 We say that $\eta$ is a 
{\em generating form for $V$} (alternatively, that {\em $V$ supports a generating form $\eta$})
if for any $z\in V$ we have that $\eta$ is generating at $z$ relative to $V$.
\end{Def}
\begin{Ex}\label{Ex:BM-n} Set
\begin{equation*}
\beta (w, z) = |w-z|^2
\end{equation*}
We define the \emph{Bochner-Martinelli generating form}
 to be
\begin{equation}\label{D:BM}
\eta (w, z) \, = \,
\frac{\dee_w\beta}{\beta}(w, z)
\, =\, 
\sum\limits_{j=1}^n\,\frac{\overline{w}_j- \overline{z}_j}{|w-z|^2}\,dw_j\ 
\end{equation}
 It is clear that $\eta$ satisfies conditions \eqref{D:generating-2} and \eqref{D:generating-1}
for any domain $V$ and for any $z\in V$, with 
$U_z:=\mathbb C^n\setminus\{z\}$. 
\end{Ex}
The Bochner-Martinelli generating form 
 has several remarkable features. First, it is ``universal'' in the sense
that it is given by a formula \eqref{D:BM} that does not depend  on the choice of domain $V$; 
 secondly, in complex dimension $n=1$
 it  agrees (up to a scalar multiple) with the classical Cauchy kernel
$$
\frac{1}{2\pi i}\frac{dw}{w-z}, \quad w\in U_z:= \mathbb C\setminus \{z\}
$$
 and in particular its coefficient $(w-z)^{-1}$
  is holomorphic as a function of  $z\in V$ for any fixed $w\in\bndry V$.
 On the other hand, it is clear from \eqref{D:BM} that
 for
  $n\ge 2$ the coefficients of this
   form are nowhere holomorphic:
   this failure at holomorphicity 
    is an instance of a
 crucial,
dimension-induced 
phenomenon 
 which was alluded to in conditions {\tt{ii.}} and (c) in Section \ref{S:CI-n} and will be further discussed in Example \ref{Ex:BM-CF} below and in Section \ref{S:convexity}.
\subsection{Cauchy-Fantappi\'e forms.}
Suppose now that for each fixed $z$
$\eta (w, z)$ is a form of type $(1,0)$ in $w$ with coefficients of class $C^1$ in each variable.
(We are not assuming that $\eta$ is a generating form).
 Set

\begin{equation}\label{D:CF-0}
\Omega_0(\eta)(w, z) = \frac{1}{(2\pi i)^n}\,\eta\wedge(\deebar_w\eta)^{n-1}(w, z)
\end{equation}
where $(\deebar_w\eta)^{n-1}$ stands for the wedge product: $\deebar_w\eta\,\wedge\cdots\wedge\,\deebar_w\eta$ performed $(n-1)$-times.
We call $\Omega_0 (\eta)$
the {\it Cauchy-Fantappi\`e form
for $\eta$}.
Note that $\Omega_0(\eta)(w, z)$ is
 of type $(n, n-1)$ in the variable $w\in U$ while in the variable $z\in V$ it is just a function. 
\medskip

\begin{Ex}\label{Ex:BM-CF} The {\em Cauchy-Fantappi\'e form for the Bochner-Martinelli generating form} or, for short,  {\em Bochner-Martinelli CF form} is 
\begin{equation*}\label{E:BM-CF}
\Omega_0\!\left(\frac{\dee_w\beta}{\beta}\right)\!(w, z) =
\frac{(n-1)!}{(2\pi i\,|w-z|^2)^n}\sum\limits_{j=1}^n
(\overline w_j - \overline z_j)\, dw_j\wedge\left(\bigwedge\limits_{\nu\neq j}d\overline{w}_\nu\wedge dw_\nu\right).
\end{equation*}
Now the {\em Bochner-Martinelli integral} is the operator
\begin{equation*}
\mathbf{\Sz_{BM}}f(z)=\int\limits_{w\in\bndry D}\!\!\!\!\! f(w)\,\Sz_{BM}(w, z),\quad z\in D,\ \ f\in C(\bndry D)
\end{equation*}
where the kernel $\Sz_{BM}(w, z)$ is the Bochner- Martinelli CF form restricted to the boundary; more precisely
\begin{equation*}\label{D:BM-CF-form}
\Sz_{BM}(w, z) = j^*\Omega_0\!\left(\frac{\dee_w\beta}{\beta}\right)\!(w, z),\quad w\in\bndry D,\ \ z\in D
\end{equation*}
where $j^*$ denotes the pullback of forms under the inclusion
$$j: \bndry D\hookrightarrow\mathbb C^n$$
\end{Ex}
It is clear that such operator is ``natural'' in the sense discussed in condition (a) in Section \ref{S:CI-n}, and we will see in Section \ref{S:poten-th} that this operator also satisfies condition
(b), see proposition \ref{P:repr-BM-CF-1} in that section. On the other hand, the kernel $\Sz_{BM}(w, z)$ is nowhere holomorphic in $z$: as a result, when $n>1$ the Bochner-Martinelli integral does not satisfy condition (c).\\

We will now review the properties of Cauchy-Fantappi\`e forms that are most relevant to us.
\begin{BP}\label{BP-1} 
 For any function $g\in C^1(U)$ 
we have
\begin{equation*}\label{E:BP-1}
\Omega_0\!\left(g(w)\,\eta(w, z)\right) = g^n(w)\,\Omega_0(\eta)(w, z)\quad \mbox{for any}\ w\in U.
\end{equation*}
\end{BP}
\begin{proof} The proof is a computation: by the definition \eqref{D:CF-0}, we have
$$
\Omega_0\!\left(g\,\eta\right) = g\,\eta\wedge \left(\deebar(g\,\eta)\right)^{n-1}
$$
On the other hand, computing $\left(\deebar(g\, \eta)\right)^{n-1}$ produces two kinds
of terms:
\begin{itemize}
\item[(a.)] Terms that contain $\deebar g\wedge\eta$: but these do not contribute to $\Omega_0(g\,\eta)$
because $g\,\eta\wedge\deebar g\wedge\eta =0$ (which follows from $\eta\wedge\eta=0$ since $\eta$ has degree $1$);
\item[(b.)] The term $g^{n-1}\,\deebar\,\eta$, which gives the desired conclusion.
\end{itemize}
\end{proof}

Suppose, further, that $\eta(w, z)$ is generating at $z$ relative to $V$. Then the following
two properties also hold.
\begin{BP}\label{BP-2} We have that
\begin{equation}\label{E:BP-2}
(\deebar_w\eta)^{n}(w, z)
= 0\quad \text{for any}\  w\in U_z.
\end{equation}
\end{BP}
Note that if the coefficients of $\eta(\cdot, z)$ are in $C^2(U_z)$, then as a consequence of the fact that $\deebar\circ\deebar =0$, we have that $(\deebar_w\eta)^{n}(w, z) = d_w\Omega_0(\eta)(w, z)$ and 
\eqref{E:BP-2}
can be formulated equivalently as
\begin{equation*}\label{E:BP-2aux}
d_w\Omega_0(\eta)(w, z)\ =\ 0,\quad w\in U_z.
\end{equation*}
\begin{proof}
We prove \eqref{E:BP-2} in the case: $n=2$ and leave the proof for general
$n$ as an exercise for the reader. 
Thus, writing $$\eta = \eta_1dw_1+\eta_2dw_2$$ we obtain
\begin{equation}\label{E:deebar-eta-square}
(\deebar_w\eta)^2= -2\,\deebar_w\eta_1\wedge\deebar_w\eta_2\wedge dw_1\wedge dw_2
\end{equation}
Now
\begin{equation*}\label{E-gen-2-vble}
\eta_1(w, z)(w_1-z_1) + \eta_2(w,z)(w_2-z_2) = 1\quad \mbox{for any}\quad w\in U_z
\end{equation*}
because $\eta$ is generating at $z$, and applying $\deebar_w$ to each side of this identity we obtain
\begin{equation}\label{E:d-gen-2-vble}
(w_1-z_1)\,\deebar_w\eta_1(w, z) + (w_2-z_2)\,\deebar_w\eta_2(w, z) = 0\quad \mbox{for any}\quad w\in U_z
\end{equation}
Recall that $U_z\subset\mathbb C^2\setminus\{z\}$, see definition \ref{D:gen-form}, and so
\begin{equation*}
U_z\cap U = U_z^1 \cup U_z^2
\end{equation*}
where
\begin{equation}\label{E:set-1}
U_z^1 =\{w =(w_1, w_2)\in U_z\cap U,\ w_1\neq z_1\}
\end{equation}
\begin{equation}\label{E:set-2}
U_z^2 =\{w=(w_1, w_2)\in U_z\cap U,\ w_2\neq z_2\}
\end{equation}
But for any two sets $A$ and $B$ one has
$
A\cup B = (A\setminus\! B)\, \dot\cup\, (B\setminus\! A)\,\dot\cup\, (A\cap B)
$
where $\dot\cup$ denotes disjoint union. Now, if $w\in U^1_z\setminus\! U^2_z$ then
\eqref{E:d-gen-2-vble} reads
$$
(w_1-z_1)\,\deebar_w\eta_1(w, z) =0,\quad w_1\neq z_1
$$
but this implies 
$$
\deebar_w\eta_1(w, z) =0\quad \mbox{for any} \quad w\in U^1_z\setminus\! U^2_z.
$$
 One similarly obtains
$$\deebar_w\eta_2(w, z) =0\quad \mbox{ for any}\quad  w\in U^2_z\setminus\! U^1_z). $$
We are left
to consider the case when $w\in U^1_z\cap U^2_z$; note that since
$$
(w_1-z_1)(w_2-z_2)\neq 0\quad \mbox{for any}\quad w\in U^1_z\cap U^2_z
$$
showing that $(\deebar_w\eta)^2(w, z) =0$ for any $w\in U^1_z\cap U^2_z$ is now equivalent 
to showing that
$$
(w_1-z_1)(w_2-z_2)(\deebar_w\eta)^2(w, z) =0 \quad \mbox{for any}\quad w\in U^1_z\cap U^2_z
$$
To this end, combining \eqref{E:deebar-eta-square} with \eqref{E:d-gen-2-vble} we find
$$
(w_1-z_1)(w_2-z_2)(\deebar_w\eta)^2(w, z)=
$$
$$ = 2 (w_1-z_1)^2\,
\deebar_w\eta_1(w, z)\,\wedge \deebar_w\eta_1(w, z)\,\wedge dw_1\wedge dw_2
$$
and indeed $$\deebar_w \eta_1\,\wedge\deebar_w\eta_1 =0$$ because
$\deebar_w\eta_1$ is a form of degree 1.
\end{proof}

\bigskip

 \noindent Let $\eta (w, z)$ be a form of type $(1, 0)$ in the variable $w$  
 (not necessarily generating for $V$) and with coefficients in $C^1(U\times V)$; set

\begin{equation}\label{D:CF-1}
\Omega_1(\eta)(w, z) = \frac{(n-1)}{(2\pi i)^n}\bigg(\eta\wedge(\deebar_w\eta)^{n-2}\wedge\deebar_z\eta\bigg) (w, z)
\end{equation}
\smallskip

Note that $\Omega_1(\eta)(w, z)$ is
of type $(n, n-2)$ in the variable $w$ 
 and of type $(0, 1)$ in the variable $z$. 
We call $\Omega_1 (\eta)$
the {\it Cauchy-Fantappie' form of order 1 for $\eta$}, 
 and the previous
one, $\Omega_0(\eta)$, will now be called {\it Cauchy-Fantappie' form of order 0}.
\bigskip

In the previous properties $z$ was fixed; here it is allowed to vary.
\begin{BP}\label{BP-4} 
We have (again for $\eta$ generating at $z$)
\begin{equation}\label{E:BP-4}
(2\pi i)^{\!n}\,\deebar_z\Omega_0(\eta)(w, z) = -(\deebar_w\eta)^{n-1}\wedge \deebar_z\eta +
\eta\wedge(\deebar_w\eta)^{n-2}\wedge\deebar_z\deebar_w\eta\,,
\end{equation}
For any $w\in U_z\cap U$, where $U_z$ is as in \eqref{D:generating-1}. Note that if the coefficients are in fact of class $C^2$ in each variable, then 
\eqref{E:BP-4} has the equivalent formulation
\begin{equation}\label{E:BP-4-aux}
\deebar_z\Omega_0(\eta)(w, z) = - d_w\Omega_1(\eta)(w, z).
\end{equation}
 \end{BP}
 \begin{proof}
As before, we specialize to the case: $n=2$ and leave the proof of the general case as an exercise for the reader. For $n=2$ identity \eqref{E:BP-4} reads
\begin{equation}\label{E:BP-4-auxx}
\deebar_z\big(\eta\wedge \deebar_w\eta\big) = 
- \deebar_w\eta\wedge \deebar_z\eta + 
\eta\wedge\deebar_z\deebar_w\eta\,
\end{equation}
By the Leibniz rule we have
\begin{equation*}\label{E:2}
\deebar_z\big(\eta\wedge \deebar_w\eta\big) =
 \deebar_z\eta\wedge \deebar_w\eta + \eta\wedge \deebar_z\deebar_w\eta
\end{equation*}
and so it is clear  that 
\eqref{E:BP-4-auxx}
will follow if we can show that
\begin{equation*}\label{E:BP-4-auxxx}
 \deebar_w\eta\wedge \deebar_z\eta =0 ,\quad \mbox{for any}\quad w\in U_z
\end{equation*}
for any generating form $\eta$ with coefficients of class $C^1$. Proceeding as in the proof
of basic property \ref{BP-2}, we decompose 
$$U_z\cap U= U^1_z\cup U^2_z$$ where $U^1_z$ and 
$U^2_z$ are as in \eqref{E:set-1} and \eqref{E:set-2}, respectively. Again, we have
\begin{equation*}\label{E-gen-2-vble-again}
\eta_1(w, z)(w_1-z_1) + \eta_2(w,z)(w_2-z_2) = 1\quad \mbox{for any}\quad w\in U_z
\end{equation*}
because $\eta$ is generating, and applying $\deebar_w$
 to
each side of this identity we find
\begin{equation}\label{E:temp}
0=\left\{\!\!\!\begin{array}{rcl}
\big(\deebar_w\eta_1\big)\!\cdot\! (w_1-z_1)+ 
\big(\deebar_w\eta_2\big)\!\cdot\! (w_2-z_2),&\mbox{if}& w\in U^1_z\cap U^2_z\\
\\
\big(\deebar_w\eta_1\big)\!\cdot \!(w_1-z_1),&\mbox{if}& w\in U^1_z\setminus U^2_z\\
\\
\big(\deebar_w\eta_2\big)\!\cdot\! (w_2-z_2),&\mbox{if}& w\in U^2_z\setminus U^1_z\\
\end{array}
\right. 
\end{equation}
Similarly, applying $\deebar_z$, we have
\begin{equation}\label{E:temp'}
0=\left\{\!\!\!\begin{array}{rcl}
\big(\deebar_z\eta_1\big)\!\cdot\! (w_1-z_1)+
 \big(\deebar_z\eta_2\big)\!\cdot\! (w_2-z_2),&\mbox{if}& w\in U^1_z\cap U^2_z\\
\\
\!\big(\deebar_z\eta_1\big)\!\cdot\! (w_1-z_1),&\mbox{if}& w\in U^1_z\setminus U^2_z\\
\\
\big(\deebar_z\eta_2\big)\!\cdot\! (w_2-z_2),&\mbox{if}& w\in U^2_z\setminus U^1_z\\
\end{array}
\right. 
\end{equation}
Now
\begin{equation}\label{E:temp-1}
\deebar_w\eta\wedge \deebar_z\eta = 
\big(
\deebar_w\eta_1\wedge\deebar_z \eta_2 -
\deebar_w\eta_2\wedge\deebar_z \eta_1
\big)\wedge dw_1\wedge dw_2
\end{equation}
Note that if $w\in U^1_z\setminus U^2_z$ then $w_1\neq z_1$, and so showing
that 
$$
 \deebar_w\eta\wedge \deebar_z\eta =0\quad \mbox{for}\quad w\in U^1_z\setminus U^2_z
$$
is equivalent to showing that
$$
 (\deebar_w\eta\wedge \deebar_z\eta)\cdot(w_1-z_1) =0
$$
that is (using \eqref{E:temp-1})
$$
\big(\deebar_w\eta_1\cdot(w_1-z_1)\wedge\deebar_z \eta_2 -
\deebar_w\eta_2\wedge\deebar_z \eta_1\cdot(w_1-z_1)
\big)\wedge dw_1\wedge dw_2 =0
$$
but this is indeed true by the generating form hypothesis on $\eta$ as expressed in \eqref{E:temp} and \eqref{E:temp'}. This shows that
the desired conclusion is true when $w\in U^1_z\setminus U^2_z$; the case:
$w\in U^2_z\setminus U^1_z$ is dealt with in a similar fashion. Finally, if 
$w\in U^1_z\cap U^2_z$, then $(w_1-z_1)(w_2-z_2)\neq 0$ and
 $$(\deebar_w\eta\wedge \deebar_z\eta)\!\cdot\!(w_1-z_1)(w_2-z_2) =
 $$
 \begin{eqnarray*}
 =\bigg(
 \big(\deebar_w\eta_1\big)\!\cdot\!(w_1-z_1)\wedge
  \big(\deebar_z \eta_2\big)\!\cdot\!(w_2-z_2)+ \\
- \big(\deebar_w\eta_2\big)\!\cdot\!(w_2-z_2)\wedge
 \big(\deebar_z \eta_1\big)\!\cdot\!(w_1-z_1)
 \bigg)\wedge dw_1\wedge dw_2
 \end{eqnarray*}
but the two terms in the righthand side of this identity cancel out on account of \eqref{E:temp}
and  \eqref{E:temp'}.
\end{proof}

\bigskip

\section{reproducing formulas:  some general facts}
\label{S:poten-th}
In this section we highlight the
theory of reproducing formulas for holomorphic functions
 by means of integral operators that arise from the Cauchy-Fantappi\' e formalism. 
 One of our goals here is to show that the usual reproducing properties of such operators extend to the situation where the data and the generating form have lower regularity.
 We begin with a rather specific example: 
 the reproducing formula for 
 the Bochner-Martinelli integral, see 
 proposition \ref{P:repr-BM-CF-1}. The proof of this result is a consequence of a
 recasting of the classical mean value property 
for harmonic functions in terms of an identity \eqref{E:BP-5} that links the Bochner-Martinelli CF form on a sphere with the sphere's Euclidean surface measure.

Because the Bochner-Martinelli integral of a continuous function is, in general, not holomorphic
in $z$, in fact we
 need 
 a more general version of
proposition \ref{P:repr-BM-CF-1} that applies to integral operators whose kernel is allowed to be
 any Cauchy-Fantappi\'e form: this is done in proposition
  \ref{P:repr-hol-bndry}. 
  
  While the operators defined so far are given by surface integrals over the boundary of the ambient domain,  following an idea of Ligocka \cite{L} another family of integral operators can be defined (essentially by differentiating the kernels of the operators in the statement of proposition \ref{P:repr-hol-bndry})  which are realized as ``solid'' integrals over the ambient domain, and we show in
  proposition \ref{P:repr-hol-interior} that such operators, too, have the reproducing property.
 
\begin{Lem}\label{L:BP-3-5} Let $z\in\mathbb C^n$ be given and consider a ball centered at such $z$,\ 
$\mathbb B_r(z) =\{w\in\mathbb C^n,\ |w-z|<r\}$. 

Then, at the center $z$ and
for any $w\in \bndry \mathbb B_r(z)$ we have that the Bochner-Martinelli CF form 
for the ball $\mathbb B_r(z)$
has the following representation
\begin{equation}\label{E:BP-5}
\Sz_{BM}(w, z)=
\frac{d\sigma (w)}{\sigma(\bndry \mathbb B_r(z))}
\end{equation}
where $d\sigma (w)$ is the element of Euclidean surface measure for $\bndry\mathbb B_r(z)$, and
\begin{equation*}
\sigma(\bndry \mathbb B_r(z))=\frac{\,2\pi^nr^{2n-1}}{(n-1)!}
\end{equation*}
denotes surface measure of the sphere $\bndry \mathbb B_r(z)$.
\end{Lem}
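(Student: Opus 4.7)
The plan is to use $U(n)$-invariance of both sides to reduce \eqref{E:BP-5} to the determination of a single scalar constant, and then to pin that constant down by a computation at one convenient point on the sphere.

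First I would observe that $U(n)$ acts transitively on $\partial\mathbb B_r(z)$ by rotations centered at $z$, and that both sides of \eqref{E:BP-5} are $U(n)$-invariant top-degree forms on this sphere. Invariance of $d\sigma$ is clear, while invariance of $\Sz_{BM}(w,z)$ is immediate from the explicit formula in Example \ref{Ex:BM-CF}, since that formula is built entirely out of Hermitian objects in $w-z$ and its conjugate. Because top-degree forms on a $(2n-1)$-manifold are scalar valued and the sphere is a single $U(n)$-orbit, any two such invariant forms must be proportional. Hence
\[
\Sz_{BM}(\cdot,z)\big|_{\partial\mathbb B_r(z)} = c(r,n)\, d\sigma
\]
for some constant $c(r,n)$, and the problem reduces to computing $c(r,n)$.

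To pin down $c(r,n)$, I would translate so $z=0$ and evaluate at the point $w_0 := (r,0,\ldots,0)$. At $w_0$ only the $j=1$ term in the Bochner--Martinelli sum of Example \ref{Ex:BM-CF} survives, since $\overline{w}_\nu = 0$ for $\nu\ge 2$. Writing $w_1 = x_1 + i y_1$, the tangent space to the sphere at $w_0$ is cut out by $dx_1 = 0$, so $dw_1 = dx_1 + i\,dy_1$ pulls back to $i\,dy_1$, while $d\overline{w}_\nu\wedge dw_\nu = 2i\, dx_\nu \wedge dy_\nu$ for $\nu\ge 2$ is already tangential. The surviving $j=1$ term thus restricts to
\[
\frac{(n-1)!}{(2\pi i)^n r^{2n-1}}\cdot i\cdot(2i)^{n-1}\, dy_1 \wedge dx_2 \wedge dy_2 \wedge \cdots \wedge dx_n \wedge dy_n,
\]
and comparing with $d\sigma(w_0) = dy_1 \wedge dx_2 \wedge dy_2 \wedge \cdots \wedge dx_n \wedge dy_n$ (from the convention $d\sigma = \iota_{\vec n}\, dV$, with outward normal $\vec n = \dee/\dee x_1$ at $w_0$), the powers of $i$ and $2$ collapse to give $c(r,n) = (n-1)!/(2\pi^n r^{2n-1}) = 1/\sigma(\partial\mathbb B_r(z))$, exactly as claimed.

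The only real obstacle is bookkeeping: keeping track of the various factors of $i$ and $2$ and settling the orientation convention for $d\sigma$. A more coordinate-free variant would write $d\sigma = r^{-1}\iota_{\vec R_z}\, dV$ with $\vec R_z$ the radial vector field centered at $z$; then $\iota_{\vec R_z}\,dV$ splits into an $(n,n-1)$-type piece proportional to $\Omega_0(\dee_w\beta/\beta)$ together with a conjugate $(n-1,n)$-type piece, and a one-point check shows the two pieces restrict to $\partial\mathbb B_r(z)$ as equal and opposite multiples of $d\sigma$. This reorganizes the calculation but does not materially shorten it.
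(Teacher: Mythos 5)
Your proof is correct, but it follows a genuinely different route from the paper's. The paper proves the pointwise identity $\Omega_0(\dee_w\beta)=\frac{(n-1)!}{2\pi^n}\,*\dee_w\beta$ on all of $\mathbb C^n$ (carrying out the computation explicitly only for $n=2$ and leaving general $n$ as an exercise), reduces the $\beta^{-n}$ factor via Basic Property \ref{BP-1}, and then invokes the general formula $d\sigma = 2j^*(*\dee\rho)/\|d\rho\|$ from Range to convert the Hodge star of $\dee\rho$ into surface measure. You instead exploit the $U(n)$-symmetry of the sphere to conclude that $j^*\Sz_{BM}(\cdot,z)$ and $d\sigma$ are proportional, and then evaluate the constant at the single point $(r,0,\dots,0)$; your bookkeeping of the factors ($i\cdot(2i)^{n-1}/(2\pi i)^n = 1/(2\pi^n)$, and $\overline w_1/\beta^n = r^{1-2n}$ on the sphere) is right, and the orientation convention $d\sigma=\iota_{\vec n}\,dV$ matches the one implicit in the paper. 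What your approach buys is a clean argument valid for all $n$ at once, with no Hodge-star machinery and no external citation; what it gives up is the intermediate identity \eqref{E:BP-3}, which holds on all of $\mathbb C^n$ and is of some independent interest, and it leans on the special homogeneity of the sphere (harmless here, since the lemma concerns only the center of a ball). One small point worth making explicit: two invariant top-degree forms on the sphere are proportional at each point with an a priori point-dependent scalar; it is the combination of invariance with transitivity of the $U(n)$-action (the ratio being an invariant function, hence constant) that forces a single constant $c(r,n)$, and this is where your argument should be stated a touch more carefully.
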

\begin{proof} We claim that the desired conclusion is a consequence of the following identity
\begin{equation}\label{E:BP-3}
\Omega_0(\dee_w\beta)(w, z)=  \frac{(n-1)!}{2\pi^n}\,*\dee_w\beta(w, z)
\end{equation}
where, as usual, we have set  $\beta(w, z) = |w-z|^2$, and $*$ denotes the Hodge-star operator mapping forms of type $(p, q)$ to forms of type $(n-q, n-p)$. Let us first prove \eqref{E:BP-5} assuming the truth of
\eqref{E:BP-3}. To this end, we first note that
from \eqref{E:BP-3}
and basic property \ref{BP-1} we have 
\begin{equation*}\label{E:BP-3-aux}
\Omega_0\!\left(\frac{\dee_w\beta}{\beta}\right)\!(w, z) = 
\frac{(n-1)!}{2\pi^n\beta^n}*\dee_w\beta(w, z),\quad w\in \mathbb C^n\, .
\end{equation*}
But 
$ \dee_w\beta(w, z) = \dee\rho (w),$  $w\in\mathbb C^n$
with $\rho(w) := \beta(w, z) -r^2$, a defining function for $\mathbb B_r(z)$.
Now recall 
that $\Sz_{BM}(w, z) = j^*\Omega_0(\dee_w\beta/\beta)$ where
$j$ is the inclusion: $\bndry \mathbb B_r(z)\hookrightarrow\mathbb C^n$, see Example \ref{Ex:BM-CF}, so that
 $j^*\beta^n=r^{2n}$. Combining these facts we conclude that, for $\rho$ as above
\begin{equation*}\label{E:BP-3-aux-1}
\Sz_{BM}(w, z)=
\frac{(n-1)!}{2\pi^n r^{2n}}\,j^*\!(*\dee\rho)(w),\quad w\in \bndry \mathbb B_r(z)
\end{equation*}
and since
$\|d\rho (w)\| = 2r$ whenever $w\in\bndry \mathbb B_r(z)$,
we obtain
\begin{equation*}\label{E:BP-3-aux-2}
\Sz_{BM}(w, z)=
\frac{(n-1)!}{2\pi^n r^{2n-1}}\,\frac{2j^*(*\dee\rho)}{\|d\rho\|}(w),\quad w\in \bndry \mathbb B_r(z);
\end{equation*}
but
\begin{equation}\label{E:surface-meas}
d\sigma(w) = \frac{2j^*\!(*\dee\rho)}{\|d\rho\|}(w),\quad w\in \bndry \mathbb B_r(z)
\end{equation}
see \cite[corollary III.3.5]{Ra}, and this gives \eqref{E:BP-5}.

We are left to prove \eqref{E:BP-3}: to this end, we assume $n=2$ and leave the case of 
arbitrary complex dimension as an exercise to for the reader.
Since
$$
*dw_j =\frac{1}{2\,i^2}dw_j\wedge d\overline{w}_{j'}\wedge dw_{j'},\quad \mbox{where}\ \ j'=\{1, 2\}
\setminus\{j\}
$$
and
$$
\dee_w\beta = (\overline{w}_1-\overline{z}_1)dw_1+ (\overline{w}_2-\overline{z}_2)dw_2
$$
then
$$
*\dee_w\beta = \frac{1}{2i^2}(\overline{w}_1-\overline{z}_1) dw_1\wedge d\overline{w}_2\wedge dw_2 +
(\overline{w}_2-\overline{z}_2) dw_2\wedge d\overline{w}_1\wedge dw_1
$$
On the other hand
$$
\deebar_w\dee_w\beta = d\overline{w}_1\wedge dw_1 + d\overline{w}_2\wedge dw_2
$$
and so
$$
\Omega_0(\dee_w\beta) = \frac{1}{(2\pi i)^2}\,\dee_w\beta\wedge \deebar_w\dee_w\beta =
$$
$$
= \frac{1}{(2\pi i)^2}\bigg((\overline{w}_1-\overline{z}_1)dw_1\wedge d\overline{w}_2\wedge dw_2 +
(\overline{w}_2-\overline{z}_2) dw_2\wedge d\overline{w}_1\wedge dw_1\bigg)=
$$
$$
=\frac{1}{2\pi^2}*\dee_w\beta.
$$
This shows \eqref{E:BP-3} and concludes the proof of the lemma.

(We remark in passing that identity \eqref{E:BP-5}, while valid for the Bochner-Martinelli generating form, is not true for general $\eta$.)
\end{proof}

\begin{Def}\label{D:domain-C-k}
 Given an integer $1\leq k\leq \infty$ and a bounded domain 
$D\subset \mathbb C^n$, we say that {\em $D$ is of class $C^k$} (alternatively,
 \emph{$D$ is $C^k$-smooth})if there is an open neighborhood 
$U$ of the boundary of $D$, and a real-valued function $\rho \in C^k(U)$ such that
\begin{equation*}
U\cap D =\{w\in U\ |\ \rho(w)<0\}
\end{equation*}
and
\begin{equation*}
\nabla\rho (w)\neq 0\quad \mbox{for any}\ w\in U.
\end{equation*}
Any such function is called a {\em defining function for $D$}.
\end{Def}
From this definition it follows that
\begin{equation*}
\bndry D =\{w\in U\ |\  \rho(w)=0\}\quad \mbox{and}
\quad U\setminus \overline{D}=\{w\in U\ |\ \rho(w)>0\}.
\end{equation*}

\begin{Prop}\label{P:repr-BM-CF-1}
For any bounded domain $\V\subset\mathbb C^n$ with boundary of class $C^1$
and for any $f\in\vartheta (\V)\cap C(\overline{\V})$, we have
\begin{equation*}\label{E:repr-C-1-any}
f(z) = \mathbf{\Sz}_{BM}f(z),
\quad z\in \V\, .
\end{equation*}
\end{Prop}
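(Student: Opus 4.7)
The plan is to derive the reproducing formula by a classical Stokes' theorem argument that exploits the closedness of the Bochner-Martinelli form away from the pole $z$, combined with Lemma~\ref{L:BP-3-5} to evaluate the boundary integral over a small sphere as a spherical average.

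Fix $z\in V$ and choose $r>0$ small enough that $\overline{\mathbb B_r(z)}\subset V$. Set $V_r=V\setminus\overline{\mathbb B_r(z)}$. Because $\bndry V$ is of class $C^1$ and $\bndry \mathbb B_r(z)$ is smooth, $V_r$ is a domain to which the classical Stokes' theorem applies. The form
\begin{equation*}
\omega(w) \,=\, f(w)\,\Omega_0\!\left(\frac{\dee_w\beta}{\beta}\right)\!(w,z)
\end{equation*}
has coefficients that are continuous on $\overline{V_r}$ and $C^1$ in the interior (since $\beta(\cdot,z)$ does not vanish on $V_r$ and $f$ is holomorphic in $V$, hence smooth there). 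I would apply Stokes' theorem to $\omega$ on $V_r$, noting that $\bndry V_r = \bndry V \sqcup (-\bndry\mathbb B_r(z))$.

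Next I compute $d_w\omega$. Since $\Omega_0(\dee_w\beta/\beta)$ has type $(n,n-1)$ in $w$, wedging with a $(1,0)$-form kills it, so $\dee_w f\wedge \Omega_0 = 0$; hence $d_w\omega = \deebar_w f\wedge\Omega_0 + f\,d_w\Omega_0$. The first term vanishes because $f\in\vartheta(V)$, and the second vanishes because the Bochner-Martinelli form $\eta(w,z)=\dee_w\beta/\beta$ is generating on $U_z=\mathbb C^n\setminus\{z\}$, so Basic Property~\ref{BP-2} (in its $C^2$ reformulation, applicable since $\eta$ is smooth on $U_z$) gives $d_w\Omega_0(\eta)=0$ on $V_r$. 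Thus $d_w\omega\equiv 0$ on $V_r$, and Stokes yields
\begin{equation*}
\int_{\bndry V} f(w)\,\Sz_{BM}(w,z) \,=\, \int_{\bndry \mathbb B_r(z)} f(w)\,\Sz_{BM}(w,z).
\end{equation*}

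Finally, I would invoke Lemma~\ref{L:BP-3-5} to rewrite the right-hand side as
\begin{equation*}
\frac{1}{\sigma(\bndry\mathbb B_r(z))}\int_{\bndry\mathbb B_r(z)} f(w)\,d\sigma(w),
\end{equation*}
which is the spherical average of $f$ over $\bndry\mathbb B_r(z)$. Since $f$ is continuous at $z$, this average tends to $f(z)$ as $r\to 0^+$, while the left-hand side is independent of $r$. Passing to the limit completes the proof.

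The only mildly delicate point is the justification of Stokes' theorem: the outer boundary $\bndry V$ is merely $C^1$, but this is sufficient since $\omega$ extends continuously to $\overline{V_r}$ and is $C^1$ in the interior (the singularity of $\Sz_{BM}(\cdot,z)$ lies inside $\mathbb B_r(z)$, safely away from $\bndry V$). The substantive conceptual step is recognizing that Basic Property~\ref{BP-2} provides exactly the closedness needed to replace integration over $\bndry V$ by integration over the sphere, reducing the result to the elementary fact that spherical averages of a continuous function recover its value at the center.
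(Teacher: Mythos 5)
Your proof is correct and follows essentially the same route as the paper: Stokes' theorem on $V\setminus\overline{\mathbb B_r(z)}$ together with basic property \ref{BP-2} to transfer the boundary integral to the small sphere, then lemma \ref{L:BP-3-5} to identify the sphere integral as the spherical average of $f$. The only (harmless) difference is the final step: the paper invokes the mean value property of harmonic functions to obtain $f(z)$ exactly for each fixed $r$, whereas you let $r\to 0^+$ and use only the continuity of $f$ at $z$ together with the $r$-independence of the outer integral; both are valid.
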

\begin{proof}
Given $z\in V$, let $r>0$ be such that
$$
\overline{\mathbb B_r(z)}\subset V.
$$
Note that the mean value property for harmonic functions:
\begin{equation*}\label{E:MVP}
f(z)\ =\ \frac{1}{\sigma(\bndry\mathbb B_r(z))}\!\!\int\limits_{\bndry\mathbb B_r(z)}\!\!\!\!\! f(w)\,d\sigma(w),\quad f\in\mbox{Harm}(\mathbb B_r(z))\cap C(\overline{\mathbb B_r(z)})
\end{equation*}
and identity \eqref{E:BP-5} give
\begin{equation}\label{E:repr-C-1-ball}
f(z) = 
\int\limits_{w\in \bndry\mathbb B_r (z)}\!\!\!\!\!\!\!\!
f(w)\,\Sz_{BM}(w, z)
\end{equation}
To prove the conclusion, we apply
Stokes' theorem on the set
$$\V_r (z) := \V\setminus \overline{\mathbb B_r (z)}$$
and we obtain
$$
\int\limits_{w\in \V_r(z)}\!\!\!\!\!\!\!\!\!\ d_w\!\left(\!f(w)\,\Omega_0\!\left(\frac{\dee_w\beta}{\beta}(w, z)\!\right)\!\right) =\!\!\!\!
\int\limits_{w\in b\V_r (z)}\!\!\!\!\!\!\!\!f(w)\,\Sz_{BM}(w, z)
$$
But by basic property \ref{BP-2}, and since $f$ is holomorphic, we have
$$
d_w\!\left(\!f(w)\,\Omega_0\!\left(\frac{\dee_w\beta}{\beta}(w, z)\!\right)\!\right) =
f(w)\,\deebar_w\Omega_0\!\left(\frac{\dee_w\beta}{\beta}(w, z)\!\right) =0
$$
and so the previous identity becomes
$$
\int\limits_{w\in b\V}\!\!\!\!\!f(w)\,\Sz_{BM}(w, z)
\ =\!\! \int\limits_{w\in b\mathbb B_r (z)}\!\!\!\!\!\!\!\!f(w)\,\Sz_{BM}(w, z)
$$
but the lefthand side is
$\mathbf{\Sz}_{BM}f(z)$, while  \eqref{E:repr-C-1-ball} says that the righthand side equals $f(z)$.
\end{proof}
\begin{Prop}\label{P:repr-hol-bndry} 
Let $D\subset\mathbb C^n$ be a bounded domain of class $C^1$ and let $z\in D$ be given.
Suppose that $\eta(\cdot, z)$
is a generating form at $z$ relative to $D$.
Suppose, 
furthermore, that 
 the coefficients of $\eta (\cdot, z)$ are in $C^1(U_z)$, where $U_z$ is as in definition \ref{D:gen-form}.
 Then,
 we have
\begin{equation}\label{E:repr-hol-bndry}
f(z) =\!\!\!\!
\int\limits_{w\in \bndry D}\!\!\!\!\!
f(w)\, j^*\Omega_0\!\left(\eta\right)\!(w, z)\quad \mbox{for any}  \ f\in \vartheta (D)\cap C(\overline{D}).
\end{equation}
\end{Prop}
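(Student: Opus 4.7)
\ \ The strategy is to reduce Proposition \ref{P:repr-hol-bndry} to Proposition \ref{P:repr-BM-CF-1} by constructing a \emph{global} generating form $\tilde\eta$ on $\mathbb C^n\setminus\{z\}$ that agrees with $\eta$ on a neighborhood of $\bndry D$ and with the Bochner--Martinelli generating form $\dee_w\beta/\beta$ on a small ball around $z$, and then running the annular Stokes' argument exactly as in the proof of Proposition \ref{P:repr-BM-CF-1}.

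Since $\bndry D$ is compact and $U_z\subset\mathbb C^n\setminus\{z\}$ is open with $\bndry D\subset U_z$, one may pick open sets $W_1,W_2$ with $\bndry D\subset W_1\subset\overline{W_1}\subset W_2\subset\overline{W_2}\subset U_z$ and a cutoff $\chi\in C^\infty_c(\mathbb C^n)$ with $\chi\equiv 1$ on $W_1$ and $\operatorname{supp}\chi\subset W_2$. Set
\begin{equation*}
\tilde\eta(w,z)\,:=\,\chi(w)\,\eta(w,z)\,+\,\bigl(1-\chi(w)\bigr)\,\frac{\dee_w\beta}{\beta}(w,z),
\end{equation*}
a $(1,0)$-form in $w$ with $C^1$ coefficients on $\mathbb C^n\setminus\{z\}$. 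Linearity of the pairing together with \eqref{D:generating-1} applied to $\eta$ (on $\operatorname{supp}\chi\subset U_z$) and to $\dee_w\beta/\beta$ (on $\mathbb C^n\setminus\{z\}$) yields $\langle\tilde\eta(w,z),w-z\rangle=\chi\cdot 1+(1-\chi)\cdot 1=1$, so $\tilde\eta$ is generating at $z$ relative to $D$ with enlarged generating set $\tilde U_z=\mathbb C^n\setminus\{z\}$. Since $\tilde\eta=\eta$ on $W_1\supset\bndry D$, we have $j^*\Omega_0(\tilde\eta)=j^*\Omega_0(\eta)$ on $\bndry D$.

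Now pick $r>0$ small enough that $\overline{\mathbb B_r(z)}\subset D\setminus\operatorname{supp}\chi$; on a neighborhood of this ball $\tilde\eta=\dee_w\beta/\beta$, so $j^*\Omega_0(\tilde\eta)=\Sz_{BM}$ on $\bndry\mathbb B_r(z)$. Applying Stokes' theorem to $f(w)\,\Omega_0(\tilde\eta)(w,z)$ on $D_r(z):=D\setminus\overline{\mathbb B_r(z)}$ exactly as in the proof of Proposition \ref{P:repr-BM-CF-1}, and using that $\deebar_w f=0$ together with $d_w\Omega_0(\tilde\eta)=0$ (Basic Property \ref{BP-2} for $\tilde\eta$ on $\tilde U_z$), one gets $d_w\bigl(f\,\Omega_0(\tilde\eta)\bigr)=0$ on $D_r(z)$, so that
\begin{equation*}
\int_{\bndry D}\! f(w)\,j^*\Omega_0(\eta)(w,z)\,=\,\int_{\bndry\mathbb B_r(z)}\! f(w)\,\Sz_{BM}(w,z)\,=\,f(z),
\end{equation*}
the last equality being the ball identity \eqref{E:repr-C-1-ball} coming from Lemma \ref{L:BP-3-5} and the mean value property.

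The genuinely delicate point is the regularity: $\eta$, and hence $\tilde\eta$, is only $C^1$, so $\Omega_0(\tilde\eta)$ has merely continuous coefficients and $d_w\Omega_0(\tilde\eta)$ is literally defined only in the $C^2$ setting (as the remark following Basic Property \ref{BP-2} emphasizes). The standard way to push the above argument into the $C^1$ category is to mollify $\tilde\eta$ by convolution in $w$ to obtain $C^\infty$ forms $\tilde\eta_\delta$, rescale by $\langle\tilde\eta_\delta,w-z\rangle^{-1}=1+O(\delta)$ so that the rescaled forms remain generating, run the Stokes' step above for each $\delta>0$, and pass to the limit $\delta\to 0$ using uniform $C^1$ convergence of the coefficients on compact subsets of $\mathbb C^n\setminus\{z\}$. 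This regularization-and-limit step is where most of the technical work of the proof is concentrated.
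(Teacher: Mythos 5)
Your overall strategy is the same as the paper's: patch $\eta$ with the Bochner--Martinelli form via a cutoff supported in $U_z$, regularize, apply Stokes' theorem, invoke Basic Property \ref{BP-2} to kill the interior term, and reduce to the mean-value identity on a small sphere. Two technical differences are worth noting. First, you run Stokes on the full annulus $D\setminus\overline{\mathbb B_r(z)}$ and land directly on $\bndry\mathbb B_r(z)$, whereas the paper runs Stokes only on a collar $D^{\circ}=D\cap U_z(\bndry D)$ near the boundary and then invokes Proposition \ref{P:repr-BM-CF-1} on the inner domain bounded by $D\cap\bndry(U_z(\bndry D))$; these are equivalent, and yours is marginally more direct. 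Second, your regularization insists that the approximants be exactly generating (hence the rescaling by $\langle\tilde\eta_\delta,w-z\rangle^{-1}$) so that $d_w\Omega_0(\tilde\eta_\delta)=0$ on the nose. The paper's device is slightly slicker: it approximates the patched form by arbitrary $C^2$ forms $\eta^{\circ}_\ell$ in the $C^1(\overline{D^{\circ}})$ norm, keeps the resulting solid term $\int_{D^{\circ}}f\,(\deebar_w\eta^{\circ}_\ell)^n$ in the Stokes identity, and observes that this term converges to $\int_{D^{\circ}}f\,(\deebar_w\eta^{\circ})^n$, which vanishes by Basic Property \ref{BP-2} applied to the ($C^1$, generating) limit form. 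This avoids having to preserve the generating property under approximation; your rescaling trick also works but is an extra moving part.

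The one genuine omission is the passage from $f$ holomorphic in a neighborhood of $\overline{D}$ to $f\in\vartheta(D)\cap C(\overline{D})$. Your application of Stokes' theorem on $D_r(z)$ requires $f\,\Omega_0(\tilde\eta_\delta)$ to be of class $C^1$ up to $\bndry D$, and a function that is merely continuous on $\overline{D}$ and holomorphic in $D$ need not have that regularity at the boundary. The paper therefore first proves \eqref{E:repr-hol-bndry} for $f\in\vartheta\big(U(\overline{D})\big)$ and then exhausts $D$ by the subdomains $D_k=\{\rho<-1/k\}$, noting that $\bndry D_k\subset U_z$ for $k$ large and letting $k\to\infty$ using the continuity of $f$ on $\overline{D}$. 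You should append this exhaustion step (or an equivalent limiting argument) to your proof; as written, the Stokes step is not justified for the full class of $f$ in the statement.
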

\begin{proof} 
Consider
   a smooth open neighborhood
 of $\bndry D$, which we denote $U_z(\bndry D)$, such that
 \begin{equation}\label{E:fund-hyp-2}
 U_z(\bndry D) \subset U_z
   \end{equation}
  where $U_z$ is as in  \eqref{D:generating-2} and \eqref{D:generating-1}.
Now fix two neighborhoods $U'$ and $U''$  of the boundary of $D$ such that
$$
U''\Subset U'\subset U_z(\bndry D)
$$
and let $\chi_0 (w, z)$ be a smooth cutoff function such that
\begin{equation}\label{E:def-chi}
\chi_0(w, z) =
\left\{
\begin{array}{rcl}
1& \mbox{if}&w\in U''\\
\\
0&\mbox{if}& w\in\mathbb C^n\setminus\overline{ U'}
\end{array}
\right.
\end{equation}
Define
\begin{equation*}
\eta
\degree
 (w, z) = \chi_0(w, z)\eta (w, z) + (1-\chi_0(w, z))\frac{\dee_w\beta}{\beta}(w, z)
\end{equation*}
and 
$$
{D}\degree= D\cap U_z(\bndry D).
$$
Then 
$\eta\degree$ is generating at $z$ relative to $D\degree$
 (and the open set $U_z$ of definition
\ref{D:gen-form}
is the same for $\eta$ and for $\eta\degree$); furthermore, it
follows from \eqref{E:fund-hyp-2} that
$$
\overline{{D}\degree} \subset U_z\, .
$$
Now let $\{\eta_\ell\degree\}_{\ell\in\mathbb N}$
be a sequence of $(1, 0)$-forms with coefficients in $C^2(\overline {D\degree})$
with the property that
\begin{equation*}\label{E:conv-C-1}
\| \eta_\ell\degree - \eta\degree(\cdot, z)\|_{C^1(\overline{D\degree})}\to 0\quad \mbox{as}\ \ \ell\to\infty.
\end{equation*}
Suppose first that $f\in\vartheta\big(U(\overline{D})\big)$. 
Then by type considerations
(and since $f$ is holomorphic in a neighborhood of $\overline{D}$) 
for any
$w\in D\degree$ and for any $\ell$ we have
$$
d_w\big( f(w)\Omega_0(\eta_\ell\degree)(w, z)\big) = 
\deebar_w\big( f(w)\Omega_0(\eta_\ell\degree)(w, z)\big) =
$$
$$
= f(w)\deebar_w\Omega_0(\eta_\ell\degree)(w, z) =
f(w) (\deebar_w\eta_\ell\degree)^n(w, z)
$$
Thus, applying Stokes' theorem on ${D}\degree$ 
 we find
\begin{equation*}
\int\limits_{w\in D\degree}\!\!\!\!\! f(w) (\deebar_w\eta_\ell\degree)^n(w, z)\, +\!\!\!
\int\limits_{w\in \bndry D}\!\!\!\!\!
f(w)\, j^*\Omega_0\left(\eta\degree_\ell\right)(w, z) = \!\!\!\!\!\!\!\!\!\!
\int\limits_{w\in D\,\cap\ \!\bndry (U_z (\bndry D))}\!\!\!\!\!\!\!\!\!\!\!\!\!\!\!
f(w)\, j^*\Omega_0\left(\eta\degree_\ell\right)(w, z) 
\end{equation*}
Letting $\ell\to\infty$ in the identity above we obtain
\begin{equation*}
\int\limits_{w\in D\degree}\!\!\!\!\! f(w) (\deebar_w\eta\degree)^n(w, z)\, +\!\!\!
\int\limits_{w\in \bndry D}\!\!\!\!\!
f(w)\, j^*\Omega_0\!\left(\eta\degree\right)(w, z) = \!\!\!\!\!\!\!\!\!\!
\int\limits_{w\in D\,\cap\ \!\bndry (U_z (\bndry D))}\!\!\!\!\!\!\!\!\!\!\!\!\!\!\!
f(w)\, j^*\Omega_0\left(\eta\degree\right)(w, z) 
\end{equation*}
Since $\eta\degree$ is generating at $z$, by basic property \ref{BP-2} this expression is reduced to
\begin{equation}\label{E:aux-2}
\int\limits_{w\in \bndry D}\!\!\!\!\!
f(w)\, j^*\Omega_0\!\left(\eta\degree\right)(w, z) = \!\!\!\!\!\!\!\!\!\!
\int\limits_{w\in D\,\cap\ \!\bndry (U_z (\bndry D))}\!\!\!\!\!\!\!\!\!\!\!\!\!\!\!
f(w)\, j^*\Omega_0\!\left(\eta\degree\right)(w, z) 
\end{equation}
But
\begin{equation*}
\eta\degree (w, z) =\left\{
\begin{array}{ll}
\eta (w, z),& \mbox{for}\ w\ \mbox{in an open neighborhood of}\ \bndry D\\
\\
\displaystyle \frac{\dee_w\beta}{\beta} (w, z),& \mbox{for}\ w\ \mbox{in an open neighborhood of}\ \bndry (U_z(\bndry D))
\end{array}
\right.
\end{equation*}
as a result, \eqref{E:aux-2} reads
$$
\int\limits_{w\in \bndry D}\!\!\!\!\!
f(w)\, j^*\Omega_0\!\left(\eta\right)(w, z)\ \  =
\int\limits_{w\in D\,\cap\ \!\bndry (U_z (\bndry D))}\!\!\!\!\!\!\!\!\!\!\!\!\!\!\!\!
f(w)\, \Sz_{BM}(w, z)
$$
On the other hand, 
$D\,\cap\ \!\bndry (U_z (\bndry D)) = \bndry V$ for a (smooth) open set $V\subset D$,
and using proposition \ref{P:repr-BM-CF-1}
we conclude that \eqref{E:repr-hol-bndry} holds in the case when $f\in\vartheta\big( U(\overline{D})\big)$.
To prove the conclusion in the general case: $f\in\vartheta (D)\,\cap\, C(\overline{D})$, we write $D=\{\rho(w)<0\}$,
so that $\rho(z)<0$ (since $z\in D$) and furthermore
\begin{equation}\label{D:D-k}
z\in D_k:=\left\{w\ \bigg|\ \rho(w)<-\frac{1}{k}\right\}\ \  \mbox{for any}\ k\geq k(z).
\end{equation}
But $D_k\subset D$ and so $f\in\vartheta\big(U(\overline{D}_k)\big)$; moreover
\begin{equation*}\label{E:bndry-D-k}
\bndry D_k\subset U_z \ \  \mbox{for}\ k =k(z)\ \ \mbox{sufficiently large}.
\end{equation*}
 Thus, \eqref{E:repr-hol-bndry}
  grants
\begin{equation*}\label{E:repr-hol-bndry-k-aux}
\int\limits_{w\in \bndry D_k}\!\!\!\!\!
f(w)\, j_k^*\Omega_0\!\left(\eta\right)(w, z) = f(z)\quad \mbox{for any}\ k\geq k(z)
\end{equation*}
where $j^*_k$ denotes the pullback under the inclusion $j_k:\bndry D_k\hookrightarrow\mathbb C^n$.

The conclusion now follows by letting $k\to\infty$.
\end{proof}
We remark that in the case when $\eta$ is the
 Bochner-Martinelli generating
 form $\eta:=\dee_w\beta/\beta$, proposition \ref{P:repr-hol-bndry} is simply a restatement of 
 proposition
	 \ref{P:repr-BM-CF-1}. However, since the coefficients
  of the Bochner-Martinelli CF form
   are nowhere holomorphic in the variable $z$, proposition \ref{P:repr-BM-CF-1} is of limited use in the investigation of the Cauchy-Szeg\"o and Bergman projections, and
proposition \ref{P:repr-hol-bndry}  will afford the use of more specialized choices of $\eta$.\\

\noindent The following reproducing formula is inspired by an idea of Ligocka \cite{L}.
\begin{Prop}\label{P:repr-hol-interior} 
With same hypotheses as in proposition \ref{P:repr-hol-bndry},  
 we have
\begin{equation*}\label{E:repr-hol-interior-special}
f(z) =
\frac{1}{(2\pi i)^n}\!\!\!\int\limits_{w\in D}\!\!\!\!
f(w)\, (\deebar_w\widetilde\eta\,)^n(w, z),\quad f\in \vartheta (D)\cap L^1(D)
\end{equation*}
for any $(1, 0)$-form\
$\widetilde\eta(\cdot, z)$ with coefficients in 
 $C^1(\overline{D})$ such that
\begin{equation}\label{E:extension}
j^*\Omega_0(\widetilde\eta) (\cdot, z) = j^*\Omega_0(\eta) (\cdot, z)\quad
\end{equation}
where $j^*$ denotes the pullback under the inclusion $j:\bndry D\hookrightarrow \mathbb C^n$.
\end{Prop}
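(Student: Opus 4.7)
The plan is to derive the ``solid'' integral representation from the boundary representation (Proposition \ref{P:repr-hol-bndry}) by an application of Stokes' theorem, using the hypothesis \eqref{E:extension} as the crucial bridge between the two pictures. First I would assume additionally that $f\in\vartheta(U(\overline D))$ (holomorphic in a neighborhood of $\overline D$); the extension to $f\in\vartheta(D)\cap L^1(D)$ is dealt with at the end by the same exhaustion $D_k=\{\rho<-1/k\}$ used in the proof of Proposition \ref{P:repr-hol-bndry}.

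By Proposition \ref{P:repr-hol-bndry} applied to $\eta$, together with the hypothesis \eqref{E:extension},
\begin{equation*}
f(z)=\int_{w\in\bndry D}\!\!\! f(w)\, j^*\Omega_0(\eta)(w,z)=\int_{w\in\bndry D}\!\!\! f(w)\, j^*\Omega_0(\widetilde\eta)(w,z).
\end{equation*}
Since the coefficients of $\widetilde\eta(\cdot,z)$ are of class $C^1(\overline D)$ and $D$ has $C^1$ boundary, Stokes' theorem converts the right-hand side to the solid integral $\int_D d_w\bigl(f(w)\Omega_0(\widetilde\eta)(w,z)\bigr)$. I would then compute this integrand by type considerations: $\Omega_0(\widetilde\eta)$ is of bidegree $(n,n-1)$ in $w$, so the $(1,0)$-part of $d_w$ yields forms of bidegree $(n+1,n-1)=0$; combined with $\deebar_w f=0$ this gives $d_w f\wedge\Omega_0(\widetilde\eta)=0$, and similarly $d_w\Omega_0(\widetilde\eta)=\deebar_w\Omega_0(\widetilde\eta)$. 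If $\widetilde\eta$ were $C^2$, a direct Leibniz calculation using $\deebar_w\deebar_w=0$ yields
\begin{equation*}
\deebar_w\Omega_0(\widetilde\eta)=\frac{1}{(2\pi i)^n}(\deebar_w\widetilde\eta)^n,
\end{equation*}
and the proposition would follow.

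To handle the fact that $\widetilde\eta$ is only $C^1$, I would approximate: choose a sequence $\{\widetilde\eta_\ell\}$ of $(1,0)$-forms with $C^2(\overline D)$ coefficients such that $\|\widetilde\eta_\ell-\widetilde\eta(\cdot,z)\|_{C^1(\overline D)}\to 0$ (this can be arranged by a standard mollification after a $C^1$ extension across $\bndry D$). The identity just established applies to each $\widetilde\eta_\ell$, and then letting $\ell\to\infty$ the conclusion passes to the limit because both the kernel $j^*\Omega_0(\widetilde\eta_\ell)$ on $\bndry D$ and the top-degree form $(\deebar_w\widetilde\eta_\ell)^n$ on $D$ involve only the first derivatives of $\widetilde\eta_\ell$, so $C^1$-convergence forces uniform convergence of both integrands. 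This gives the proposition for $f\in\vartheta(U(\overline D))$.

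For the general case $f\in\vartheta(D)\cap L^1(D)$, I would apply what has just been proved on the subdomain $D_k$ (on which $f$ is holomorphic in a neighborhood of the closure), obtaining $f(z)=(2\pi i)^{-n}\int_{D_k}f(w)(\deebar_w\widetilde\eta)^n(w,z)$ for all $k$ large enough that $z\in D_k$, and then send $k\to\infty$; since $(\deebar_w\widetilde\eta)^n(\cdot,z)$ is bounded on $\overline D$ and $f\in L^1(D)$, dominated convergence finishes the proof. The main technical point I would expect to watch is the $C^1$-to-$C^2$ approximation step: one has to produce $C^2$ approximants defined on an open neighborhood of $\overline D$ (not only on $D$) so that the Stokes calculation applies to each $\widetilde\eta_\ell$ exactly; this is routine once $\widetilde\eta$ is extended as a $C^1$ form across $\bndry D$, but it is the only place where some care beyond purely formal manipulation is required.
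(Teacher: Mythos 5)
Your proposal is correct and follows essentially the same route as the paper's proof: reduce to $f$ holomorphic in a neighborhood of $\overline D$, approximate $\widetilde\eta(\cdot,z)$ in the $C^1(\overline D)$ norm by $C^2$ forms $\widetilde\eta_\ell$, apply Stokes' theorem to $f\,\Omega_0(\widetilde\eta_\ell)$ together with the identity $\deebar\Omega_0(\widetilde\eta_\ell)=(2\pi i)^{-n}(\deebar\widetilde\eta_\ell)^n$, pass to the limit, invoke \eqref{E:extension} and proposition \ref{P:repr-hol-bndry}, and finish the general case with the exhaustion $D_k$ and dominated convergence. The only cosmetic difference is the order in which the boundary reproducing formula is invoked; the substance is identical.
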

Note that if one further assumes that the coefficients of $\widetilde\eta(\cdot, z)$ are in 
$C^2(D)\cap C^1(\overline{D})$ then, as a consequence of the fact that $\deebar\circ\deebar =0$, we have
$$\frac{1}{(2\pi i)^n}(\deebar_w\widetilde\eta\,)^n = \deebar_w\Omega_0(\widetilde\eta).$$
\begin{proof}
Fix $z\in D$ arbitrarily and let $\{\widetilde\eta_\ell\}_{\ell\in\mathbb N}\subset C^2_{1, 0}(\overline D)$ be such that
\begin{equation}\label{E:conv-C-1}
\| \widetilde\eta_\ell - \widetilde\eta(\cdot, z)\|_{C^1(\overline{D})}\to 0\quad \mbox{as}\ \ \ell\to\infty.
\end{equation}
Suppose first that $f\in\vartheta\big(U(\overline{D})\big)$. 
Applying Stokes' theorem to the $(n, n-1)$-form with coefficients in $C^1(\overline D)$
$$f\cdot\Omega_0(\widetilde\eta_\ell)
$$
we find
\begin{equation*}
\int\limits_{w\in D}\!\!\!\!
f(w)\, \deebar\Omega_0(\widetilde\eta_\ell)(w) =
\displaystyle
\int\limits_{w\in \bndry D}\!\!\!\!\!
f(w)\, j^*\Omega_0(\widetilde\eta_\ell)(w)\quad \mbox{for any}\ \ \ell.
\end{equation*}
On the other hand, since the coefficients of $\widetilde\eta_\ell$ are in $C^2(D)$, we have
$$
\deebar\Omega_0(\widetilde\eta_\ell) = \frac{1}{(2\pi i)^n}(\deebar\,\widetilde\eta_\ell\,)^n
\quad \mbox{for any}\ \ \ell
$$
and so the previous identity can be written as
\begin{equation*}
\frac{1}{(2\pi i)^n}\!\!\!\int\limits_{w\in D}\!\!\!\!
f(w)(\deebar\,\widetilde\eta_\ell\,)^n(w) =
\displaystyle
\int\limits_{w\in \bndry D}\!\!\!\!\!
f(w)\, j^*\Omega_0(\widetilde\eta_\ell)(w)\quad \mbox{for any}\ \ \ell.
\end{equation*}
Letting $\ell\to\infty$ in the identity above and  using \eqref{E:conv-C-1} we obtain
\begin{equation*}
\frac{1}{(2\pi i)^n}\!\!\!\int\limits_{w\in D}\!\!\!\!
f(w)(\deebar\,\widetilde\eta\,)^n(w, z) =
\displaystyle
\int\limits_{w\in \bndry D}\!\!\!\!\!
f(w)\, j^*\Omega_0(\widetilde\eta)(w, z).
\end{equation*}
Combining the latter with the hypothesis \eqref{E:extension} we obtain
\begin{equation*}
\frac{1}{(2\pi i)^n}\!\!\!\int\limits_{w\in D}\!\!\!\!
f(w)(\deebar_w\widetilde\eta\,)^n(w, z) =
\displaystyle
\int\limits_{w\in \bndry D}\!\!\!\!\!
f(w)\, j^*\Omega_0(\eta)(w, z) \ =\ f(z)
\end{equation*}
where the last identity is due to proposition \ref{P:repr-hol-bndry}.

If $f\in\vartheta (D)\cap L^1(D)$ then
$f\in\vartheta\big(U(\overline{D}_k)\big)$, where $D_k$ is as in \eqref{D:D-k}; moreover, $\bndry D_k\subset U_z \ \  \mbox{for any}\ \  k\geq k(z)$, so by the previous case we have
$$
f(z) = \int\limits_{w\in D_k}\!\!\!\!\!
f(w)\, (\deebar_w\widetilde\eta\,)^n(w, z)\quad \mbox{for any}\ \ k\geq k(z).
$$
The conclusion now follows by observing that
$$
 \int\limits_{w\in D_k}\!\!\!\!\!
f(w)(\deebar_w\widetilde\eta\,)^n(w, z)\to 
 \int\limits_{w\in D}\!\!\!\!
f(w)(\deebar_w\widetilde\eta\,)^n(w, z)
$$
 as $k\to\infty$, by the Lebesgue dominated convergence theorem.
\end{proof}
Note that the extension
$  \widetilde\eta (w, z) := 
    \chi_0(w,z)\eta (w, z)$,
   with $\chi_0$ as in \eqref{E:def-chi},
   satisfies a stronger condition than \eqref{E:extension}, namely the identity
  \begin{equation}\label{E:extension-stronger}
  \widetilde\eta(\cdot, z)=
   \eta(\cdot, z)
  \quad \mbox{for any}\ \ w\in U'_z(\bndry D).
  \end{equation}
  On the other hand,
  it will become clear in the sequel that this simple-minded
     extension is not an adequate
   tool for
  the investigation of the Bergman projection, and more ad-hoc constructions 
  are presented in Sections \ref{S:local-holom} and \ref{S:global-hol}.
  \bigskip

 \section{the role of pseudo-convexity}\label{S:convexity}
In order to obtain operators that satisfy the crucial condition (c) in Section \ref{S:CI-n}
one would need generating forms whose coefficients are holomorphic.
However, in contrast with the 
situation for
the planar case (where the Cauchy kernel plays the role of a universal generating form with holomorphic coefficient)
in higher dimension
there 
is a large class of
domains
$V\subset\mathbb C^n$ 
that cannot support 
generating forms
with holomorphic coefficients\footnote{much less a ``universal'' such form.}.
This dichotomy is related to
    the notion of 
   {\em domain of holomorphy}, that is, the property that for any boundary point $w\in \bndry V$ 
   there is a holomorphic function $f_w\in\vartheta (D)$ that cannot be continued holomorphically in a neighborhood of $w$.
    (Such notion is in turn equivalent
    to the notion of pseudo-convexity.)
  It is clear that every planar domain
  $V\subset\mathbb C$ is a domain of holomorphy, because in this case one may
  take $f_w(z):= (w-z)^{-1}$ where $w\in\bndry V$ has been fixed.
  On the other hand the following
     $$
  V=\{
   z\in\mathbb C^2\,|\, 1/2<|z|<1\}
     $$
 is a simple example of a smooth domain
  in 
 $\mathbb C^2$ that is not a domain
 of holomorphy; other classical examples are discussed e.g., in \cite[theorem II.1.1.]{Ra}. 
A necessary condition for the existence of a generating form $\eta$
   whose coefficients
 are  holomorphic in the sense described above
 is then that $V$ be a  domain of holomorphy. To prove 
 the necessity of such condition,
it suffices to  observe that as a consequence of \eqref{D:generating-1}
 and \eqref{D:generating-2} one has
 \begin{equation}\label{E:dom-holom}
  \sum\limits_{j=1}^{n} \eta_j(w, z)(w_j-z_j)\ =\  1\quad \text{for any}\ \ w\in \bndry V,\ \ z\in V.
 \end{equation}
 It is now clear that for each fixed $w\in \bndry V$,  
 at least one of the $\eta_j(w, z)$'s blows up as $z\to w$ (and it is well known that this is strong enough to ensure that $V$ be a domain of holomorphy).\\
 
In its current stage of development, the Cauchy-Fantappi\'e framework is most
effective in the analysis of
two particular
categories of pseudo-convex domains: these are the 
\emph{strongly pseudo-convex} domains and the related 
category  of
\emph{strongly $\mathbb C$-linearly convex} domains.
\begin{Def}\label{D:strongly-L-pscvx} 
We say that a domain
$D\subset\mathbb C^n$ is {\em strongly pseudo-convex} if $D$ is of class $C^2$ and if
{\em any} defining function $\rho$ for $D$ satisfies 
the following inequality
\begin{equation}\label{E:strongly-L-pscvx-II}
L_w(\rho)( \xi):= \sum\limits_{j, k=1}^n\frac{\dee^2\rho (w)}{\dee\zeta_j\dee\overline{\zeta}_k}\, 
\xi_j\overline{\xi}_k>0\ \mbox{for any}\ w\in\bndry D,\ 
\xi\in T_w^{\mathbb C}(\bndry D)
\end{equation}
where $ T_w^{\mathbb C}$ denotes the {\em complex tangent space to $\bndry D$ at $w$}, 
namely
\begin{equation*}
T^{\mathbb C}_w(\bndry D) =\{\xi\in\mathbb C^n\ |\ \langle\dee\rho (w), \xi\rangle=0\},
\end{equation*}
 see \cite[proposition II.2.14]{Ra}. 
 \end{Def}
 
 If $D$ is of class $C^k$ with $k\geq 1$, and if $\rho_1$ and $\rho_2$ are two distinct defining functions for
$D$, it can be shown that there is a positive function $h$ of class $C^{k-1}$ in a neighborhood $U$ of the boundary of $D$, such that
\begin{equation*}\label{E:def-functs}
\rho_1(w) = h(w)\rho_2(w),\ \ w\in U,
\end{equation*}
and
\begin{equation}\label{E:def-functs}
\nabla\!\rho_1(w) = h(w)\,\nabla\!\rho_2(w)\quad \mbox{for any}\ w\in U\cap \bndry D,
\end{equation}
 see \cite[lemma II.2.5]{Ra}. 
 As a consequence of \eqref{E:def-functs}, if condition \eqref{E:strongly-L-pscvx-II}
 is satisfied by \emph{one} defining function then it will be satisfied by \emph{every} defining function.
  The hermitian
form $L_w(\rho)$ defined by \eqref{E:strongly-L-pscvx-II} is called
 {\em the Levi form, or complex Hessian, of $\rho$ at $w$}.
 We remark that in fact there is a ``special'' defining function $\rho$ for $D$ that is strictly plurisubharmonic on a neighborhood
$U$ of $\overline{D}$, that is 
\begin{equation*}\label{E:strongly-L-pscvx-I}
L_w(\rho)( \xi)
>0\quad \mbox{for any}\ w\in U\ \mbox{and any}\   \xi\in\mathbb C^n\setminus\{0\},
\end{equation*}
see \cite[proposition II.2.14]{Ra}, and we will assume throughout the sequel that $\rho$ satisfies this stronger condition.

We should point out that there is another 
 notion of strong pseudo-convexity that 
  includes the domains of definition \ref{D:strongly-L-pscvx} as a subclass
  (this notion does not require the gradient of $\rho$ to be non-vanishing on $\bndry D$);
  within this more general context, the domains of definition \ref{D:strongly-L-pscvx} are sometimes referred to as
``strongly Levi-pseudo-convex'', see \cite[\S II.2.6 and II.2.8]{Ra}.\\

\begin{Def}\label{D:strongly-C-lin-cvx} We say that
$D\subset \mathbb C^n$
is  {\em strongly $\mathbb C$-linearly convex}  if $D$ is of class $C^1$ and if any
defining function
 for $D$ satisfies this
  inequality:
\begin{equation*}\label{E:strongly-C-lin-cvx}
|\langle\dee\rho(w), w-z\rangle|\geq C|w-z|^2\quad\mbox{for any}\ w\in\bndry D,\ 
z\in \overline{D}.
\end{equation*}
\end{Def}

We we call those domains that satisfy the following, weaker condition
\begin{equation*}\label{E:strictly-C-lin-cvx}
|\langle\dee\rho(w), w-z\rangle|>0\quad\mbox{for any}\ w\in\bndry D\ 
\mbox{and any}\ z\in \overline{D}\setminus \{w\}
\end{equation*}
 {\em strictly $\mathbb C$-linearly convex}.  This condition is related to certain separation properties of the domain from its complement by (real or complex) hyperplanes, see \cite{APS}, \cite[IV.4.6]{Ho}: that this must be so is a consequence of
  the assertion that, for $w$ and $z$ as in \eqref{E:strongly-C-lin-cvx},
 the quantity $|\langle\dee\rho(w), w-z\rangle|$ is comparable to the Euclidean distance of $z$ to the complex tangent space $T^{\mathbb C}_w(\bndry D)$; we leave the verification of
this assertion as an exercise for the reader.

 It is not difficult to check that
$$
D:=\{z\in\mathbb C^n\ |\ \mbox{Im}\,z_n>(|z_1|^2+\cdots+ |z_{n-1}|^2)^2\,\}
$$
is strictly, but not strongly,
 $\mathbb C$-linearly convex.

\begin{Lem}\label{L:strictly-C-lin-cvx}
If $D$ is strictly $\mathbb C$-linearly convex then for any $z\in D$ there is an open set 
$U_z\subset \mathbb C^n\setminus\{z\}$ such that
$\bndry D\subset U_z$ and inequality \eqref{E:strictly-C-lin-cvx} holds for any $w$ in $U_z$.
Furthermore, if $D$ is strongly $\mathbb C$-linearly convex then the improved inequality \eqref{E:strongly-C-lin-cvx} 
will hold for any $w\in U_z$.
\end{Lem}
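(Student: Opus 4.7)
My plan is to obtain $U_z$ by a straightforward compactness-and-continuity argument, extending the inequality from $\bndry D$ to a neighborhood.

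First I would fix $z \in D$ and a defining function $\rho$ for $D$, defined and of class $C^1$ on some open set $U \supset \bndry D$. Since $z$ is an interior point of $D$ and $\bndry D$ is closed, the Euclidean distance $d := \operatorname{dist}(z, \bndry D)$ is strictly positive. The quantity that controls everything is the continuous function
\begin{equation*}
\Phi(w) := \langle \dee \rho(w), w - z \rangle, \quad w \in U,
\end{equation*}
whose behavior on $\bndry D$ is exactly what the hypotheses \eqref{E:strictly-C-lin-cvx} or \eqref{E:strongly-C-lin-cvx} regulate.

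For the strictly convex case, I would argue as follows. The hypothesis says $|\Phi(w)| > 0$ for every $w \in \bndry D$. Since $\bndry D$ is compact (recall $D$ is bounded) and $\Phi$ is continuous, the minimum $m := \min_{w \in \bndry D} |\Phi(w)|$ is strictly positive. The set $V := \{ w \in U : |\Phi(w)| > m/2 \}$ is then an open neighborhood of $\bndry D$, and it suffices to set
\begin{equation*}
U_z := V \cap \{ w \in \mathbb C^n : |w - z| > d/2 \}.
\end{equation*}
This is open, contains $\bndry D$ (because every boundary point is at distance at least $d > d/2$ from $z$), avoids $z$, and satisfies $|\Phi(w)| > 0$ throughout.

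For the strongly convex case, I would refine the argument by passing to the quotient
\begin{equation*}
\psi(w) := \frac{|\Phi(w)|}{|w - z|^2},
\end{equation*}
which is well-defined and continuous on $U \setminus \{z\}$. By hypothesis, $\psi(w) \geq C$ for every $w \in \bndry D$. Exactly as before, continuity of $\psi$ together with compactness of $\bndry D$ (which is disjoint from $\{z\}$) produces an open neighborhood $W$ of $\bndry D$ in $U \setminus \{z\}$ on which $\psi(w) \geq C/2$. Intersecting $W$ with $\{|w - z| > d/2\}$ gives the desired $U_z$, and the inequality $|\Phi(w)| \geq (C/2)|w-z|^2$ holds throughout. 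The proof has no real obstacle beyond carefully choosing $U_z$ to simultaneously contain $\bndry D$ and exclude $z$; continuity and compactness do all the work.
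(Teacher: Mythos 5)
Your proof is correct and follows essentially the same continuity argument as the paper; the only cosmetic differences are that the paper excludes $z$ by observing that $h(\zeta):=|\langle\dee\rho(\zeta),\zeta-z\rangle|$ vanishes at $\zeta=z$ (so no neighborhood on which $h>0$ can contain $z$) rather than by intersecting with $\{|w-z|>d/2\}$, and that it takes a union of local neighborhoods instead of invoking compactness for a uniform lower bound. Your handling of the strong case, where the constant degrades harmlessly from $C$ to $C/2$, is if anything slightly more explicit than the paper's one-line remark.
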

\begin{proof}
Suppose that $D$ is strictly $\mathbb C$-linearly convex and fix $z\in D$. By the continuity of the function $h(\zeta):= |\langle\dee\rho(\zeta), \zeta- z\rangle|$,
if \eqref{E:strictly-C-lin-cvx} holds at $w\in \bndry D$ then there is an open neighborhood 
$U_z(w)$ such that $h(\zeta)>0$ for any $\zeta\in U_z(w)$
and so
 we have that $h(\zeta)>0$ whenever 
$$
\zeta\in U_z:= \bigcup\limits_{w\in\bndry D}U_z(w).
$$
It is clear that $\bndry D\subset U_z$; furthermore, since $h(z)=0$ then 
$U_z(w)\subset\mathbb C^n\setminus\{z\}$ for any $w\in\bndry D$ and so $U_z\subset\mathbb C^n\setminus\{z\}$.

If $D$ is strongly $\mathbb C$-linearly convex then the conclusion will follow by considering the function $h(\zeta) :=
|\langle\dee\rho(\zeta), \zeta- z\rangle|- C|\zeta- z|^2$.
\end{proof}

\begin{Rmrk}\label{R:classical-cvx}
We recall that  in the classical definition of {\em strong} (resp.  {\em strict}) {\em convexity}, 
  the quantity $|\langle\dee\rho(w), w-z\rangle|$ in the left-hand side of \eqref{E:strongly-C-lin-cvx} 
(resp. \eqref{E:strictly-C-lin-cvx}) is replaced by $\mbox{Re}\langle\dee\rho(w), w-z\rangle$:
it follows that any strongly (resp. strictly) convex domain is  indeed strongly (resp. strictly) $\mathbb C$-linearly convex, but the converse is in general not true. It is clear that strongly
(resp. strictly) convex domains satisfy a version of lemma \ref{L:strictly-C-lin-cvx}.
\end{Rmrk}

\begin{Lem}\label{L:strongly-L-pscvx}
 Any strongly $\mathbb C$-linearly convex domain of class $C^2$ is
strongly pseudo-convex.
\end{Lem}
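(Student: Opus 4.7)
The plan is to fix $w_0 \in \bndry D$ and a nonzero complex tangent vector $\xi \in T^{\mathbb{C}}_{w_0}(\bndry D)$, and to prove $L_{w_0}(\rho)(\xi) > 0$. The strategy is to construct a curve $z(t) \in \bndry D$ approaching $w_0$ tangentially in the direction $\xi$, apply the quantitative bound of Definition~\ref{D:strongly-C-lin-cvx} at the base point $w_0$ to each $z(t) \in \overline{D}$, and extract a strict lower bound on the Levi form from the resulting inequality.

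After a unitary change of coordinates I would place $w_0 = 0$ and $\dee\rho(0) = c\,dw_n$ with $c = |\dee\rho(0)| > 0$, so that $T^{\mathbb{C}}_0(\bndry D) = \{\zeta : \zeta_n = 0\}$. For $\xi = (\xi',0)$ of unit length and $t \in \mathbb{C}$ small, the implicit function theorem applied to the real-valued equation $\rho(t\xi + \tau_1 e_n) = 0$ in the real unknown $\tau_1$ (using $\dee\rho/\dee(\Re \zeta_n)(0) = 2c \neq 0$) produces a smooth real-valued $\tau(t)$ with $\tau(0) = 0$ such that $z(t) := t\xi + \tau(t)\,e_n \in \bndry D$. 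A second-order Taylor expansion of $\rho(z(t)) = 0$ then yields
\[
\tau(t) = -\frac{1}{2c}\bigl(\Re[t^2 Q(\xi)] + |t|^2\, L_0(\rho)(\xi)\bigr) + o(|t|^2),
\]
where $Q(\xi) := \sum_{j,k}(\dee^2\rho/\dee\zeta_j \dee\zeta_k)(0)\,\xi_j \xi_k$; moreover $|z(t)|^2 = |t|^2 + \tau(t)^2 \geq |t|^2$.

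Next, I would apply Definition~\ref{D:strongly-C-lin-cvx} at $w_0 = 0$ to $z(t) \in \overline{D}$, which yields $c|\tau(t)| = |\langle \dee\rho(0), z(t)\rangle| \geq C|z(t)|^2 \geq C|t|^2$. Substituting the expansion above, dividing by $|t|^2$, writing $t = |t|e^{i\theta}$, and letting $|t| \to 0^+$ produces
\[
\bigl|\,L_0(\rho)(\xi) + \Re[e^{2i\theta} Q(\xi)]\,\bigr| \geq 2C \quad \text{for every } \theta \in [0, 2\pi).
\]
In parallel, the same Definition~\ref{D:strongly-C-lin-cvx} applied to the complex tangent ray $t \mapsto w_0 + t\xi$ forces $w_0 + t\xi \notin \overline{D}$ for all small $t \neq 0$ (since $\langle \dee\rho(0), \xi\rangle = 0$), and the Taylor expansion of $\rho(w_0 + t\xi)$ then delivers the standard weak Levi bound $L_0(\rho)(\xi) \geq |Q(\xi)|$. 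Choosing $\theta$ so that $\Re[e^{2i\theta} Q(\xi)] = -|Q(\xi)|$ in the displayed estimate and using $L_0(\rho)(\xi) - |Q(\xi)| \geq 0$ to drop the absolute value, I conclude
\[
L_0(\rho)(\xi) \geq |Q(\xi)| + 2C \geq 2C > 0,
\]
which is the required strict positivity of the Levi form on the unit sphere of $T^{\mathbb{C}}_0(\bndry D)$, and hence strong pseudo-convexity after unwinding the change of coordinates.

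The main obstacle will be identifying the right test curve $z(t)$: Definition~\ref{D:strongly-C-lin-cvx} can only be applied to points in $\overline{D}$, so the naive choice $z(t) = w_0 + t\xi$ is forbidden (precisely because $\xi$ is complex tangent) and yields only the weak bound $L_0(\rho)(\xi) \geq |Q(\xi)|$ with no quantitative gap. The second-order normal correction $\tau(t)\,e_n$ drops $z(t)$ back onto $\bndry D$, and the resulting lower estimate $|\tau(t)| \gtrsim |t|^2$ from strong $\mathbb{C}$-linear convexity is what encodes the strict positivity of the Levi form.
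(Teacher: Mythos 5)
Your argument is correct and is a complete implementation of the approach the paper only sketches: the inequality $L_{w_0}(\rho)(\xi)+\Re[e^{2i\theta}Q(\xi)]\geq 2C$ that you extract from the boundary curve $z(t)$ is precisely the statement that the real tangential Hessian of $\rho$ is (uniformly) positive definite on the complex tangent space $T^{\mathbb C}_{w_0}(\bndry D)$, which is exactly the ``key point'' the paper identifies. The remaining passage to strict positivity of the Levi form, via the weak bound $L_{w_0}(\rho)(\xi)\geq |Q(\xi)|$ obtained from $\rho\geq 0$ along the complex tangent ray, is standard and you carry it out correctly.
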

The key point in the proof of this lemma is the 
observation that, as a consequence of \eqref{E:strongly-C-lin-cvx}, the real tangential Hessian of any defining function for a domain as in lemma \ref{L:strongly-L-pscvx}
is positive definite when restricted to the complex tangent space $T^{\mathbb C}_w(\bndry D)$
(viewed as a vector space over the \emph{real} numbers).
The converse of lemma \ref{L:strongly-L-pscvx} is not true: we leave as an exercise for the reader to verify that the following (smooth) domain
$$
D:=\{z=(z_1, z_2)\in\mathbb C^2\ |\ \mbox{Im}z_2>2(\mbox{Re}\,z_1)^2-(\mbox{Im}\,z_1)^2\}
$$
 is strongly pseudo-convex but not strongly $\mathbb C$-linearly convex.\\

In closing this section we remark that while the designations ``strongly'' and  ``strictly'' indicate
distinct families of $\mathbb C$-linearly convex domains (and of convex domains), for
pseudo-convex domains there is no 
 such distinction, and in fact in the literature the terms 
``\emph{strictly} pseudo-convex'' and ``\emph{strongly} pseudo-convex'' are often interchanged: this is because the positivity condition \eqref{E:strongly-L-pscvx-I} implies the seemingly stronger inequality
\begin{equation}\label{E:strongly-L-pscvx-III}
L_w(\rho, \xi) \geq c_0|\xi|^2 \ \ \mbox{for any} \ w\in U'\ \mbox{and for any}\ \xi\in\mathbb C^n.
\end{equation}
 Indeed, if \eqref{E:strongly-L-pscvx-I} holds
 then the function $\gamma (w) :=\min\{L_w(\rho, \xi)\ |\ |\xi|=1\}$ is positive, and by bilinearity it follows that $L_w(\rho, \xi) \geq \gamma (w) |\xi|^2$ for any $\xi\in\mathbb C^n$; since $\rho$ is of class $C^2$ (and $D$ is bounded) we may further take 
the minimum of  $\gamma (w)$ over, say, $w\in U'\subset U$ and
thus obtain  \eqref{E:strongly-L-pscvx-III}, 
see \cite[II.(2.26)]{Ra}.
\bigskip

  \section{locally holomorphic kernels}
    \label{S:local-holom}
  
  A first step in the study of the Bergman and Cauchy-Szeg\"o projections 
  is the construction of integral operators with kernels given by Cauchy-Fantappi\'e forms that are
 (at least)
{\em locally} holomorphic in $z$, that is for $z$ in a neighborhood of each (fixed) $w$:
 it is at this juncture that the 
 notion of strong pseudo-convexity
 takes center stage. 
In this section we show how to construct such operators in the case when
$D$ is a bounded, strongly pseudo-convex domain, and we then proceed to
prove the reproducing property.

To this end, we fix a
strictly plurisubharmonic defining function for $D$; that is, we fix
$$
\rho:\mathbb C^n\to\mathbb R,\quad \rho\in C^2(\mathbb C^n)
$$
such that $D=\{\rho<0\}$;\ $\nabla\rho(w)\neq 0$ for any $w\in\bndry D$ and 
\begin{equation*}\label{E:strictly-plush}
L_w(\rho, w-z) 
\,\geq\, 2c_0\,|w-z|^2,\quad w, z\in\mathbb C^n
\end{equation*}
where $L_w$ denotes the Levi form for $\rho$, see \eqref{E:strongly-L-pscvx-II} and
 \eqref{E:strongly-L-pscvx-III}.
Consider the {\em Levi polynomial of $\rho$  at $w$}:
\begin{equation*}\label{D:levi-pol}
\Delta (w, z) := \langle\dee\rho (w), w-z\rangle -
\frac{1}{2}\sum\limits_{j, k=1}^n
\frac{\dee^2\rho (w)}{\dee\zeta_j\dee\zeta_k}(w_j-z_j)(w_k-z_k)
\end{equation*}
\begin{Lem}\label{L:levi-pol}
Suppose $D=\{\rho(w)<0\}$ is bounded and strongly pseudo-convex. Then, 
 there is
 $\widetilde\epsilon_0 = \widetilde\epsilon_0(c_0)>0$ such that
\begin{equation*}\label{E:levi-pol}
2\mathrm{Re}\,\Delta (w, z) \geq \rho(w)-\rho(z) + c_0|w-z|^2
\end{equation*}
whenever $w\in D_{\!c_0}=\{w\,|\,\rho(w)<c_0\},$ and $z\in \overline{\mathbb B_{\,\widetilde\epsilon_0}(w)}$.
\end{Lem}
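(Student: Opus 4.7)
The plan is to derive the inequality from the real Taylor expansion of $\rho$ about $w$ and then compare the resulting expression to the Levi polynomial $\Delta(w,z)$. Since $\rho\in C^2$ is real valued, writing out the second-order Taylor expansion in the complex variables and collecting the holomorphic, antiholomorphic and mixed second-order terms yields
\begin{equation*}
\rho(z)-\rho(w)= 2\,\mathrm{Re}\!\left[\langle\dee\rho(w),z-w\rangle+\tfrac{1}{2}\sum_{j,k}\tfrac{\dee^{2}\rho(w)}{\dee\zeta_{j}\dee\zeta_{k}}(z_{j}-w_{j})(z_{k}-w_{k})\right]+L_{w}(\rho,z-w)+R(w,z),
\end{equation*}
where $R(w,z)=o(|w-z|^{2})$ as $z\to w$. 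The bracketed quantity is precisely $-\Delta(w,z)$ (after switching the sign of $z-w$ in the holomorphic second-order sum, which is unchanged), and $L_w(\rho,z-w)=L_w(\rho,w-z)$ since the Levi form is hermitian. Rearranging therefore gives the identity
\begin{equation*}
2\,\mathrm{Re}\,\Delta(w,z)=\rho(w)-\rho(z)+L_{w}(\rho,w-z)-R(w,z).
\end{equation*}

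The first main step is to invoke the strong pseudoconvexity hypothesis in the strengthened form already fixed at the start of the section: we chose $\rho$ to be strictly plurisubharmonic on a full neighborhood of $\overline D$, so by the argument recalled in \eqref{E:strongly-L-pscvx-III} we have $L_w(\rho,\xi)\ge 2c_0|\xi|^2$ uniformly for $w$ in a neighborhood of $\overline D$ and \emph{all} $\xi\in\mathbb{C}^n$. In particular, substituting $\xi=w-z$ gives $L_w(\rho,w-z)\ge 2c_0|w-z|^2$ for every $w\in D_{c_0}$ (after possibly shrinking $c_0$ so that $D_{c_0}$ lies inside the fixed neighborhood where \eqref{E:strongly-L-pscvx-III} holds).

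The second main step is to control the remainder. Because $\rho\in C^{2}(\mathbb{C}^{n})$ and $D_{c_{0}}$ is bounded (since $D$ is bounded), its second partial derivatives are uniformly continuous on a bounded neighborhood of $\overline{D_{c_{0}}}$, so the standard integral form of the Taylor remainder yields a uniform modulus of continuity $\omega$ with
\begin{equation*}
|R(w,z)|\ \le\ \omega(|w-z|)\,|w-z|^{2},\qquad \omega(t)\to 0\ \text{as}\ t\to 0,
\end{equation*}
for all $w\in D_{c_{0}}$ and all $z$ in a fixed neighborhood. Choose $\widetilde\epsilon_{0}=\widetilde\epsilon_{0}(c_{0})>0$ so that $\omega(t)\le c_{0}$ for $t\le\widetilde\epsilon_{0}$; then $|R(w,z)|\le c_{0}|w-z|^{2}$ for $w\in D_{c_{0}}$ and $z\in\overline{\mathbb B_{\widetilde\epsilon_{0}}(w)}$. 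Combining with the identity above and the Levi lower bound gives
\begin{equation*}
2\,\mathrm{Re}\,\Delta(w,z)\ \ge\ \rho(w)-\rho(z)+2c_{0}|w-z|^{2}-c_{0}|w-z|^{2}\ =\ \rho(w)-\rho(z)+c_{0}|w-z|^{2},
\end{equation*}
which is the desired inequality. The main obstacle, though modest, is the bookkeeping that turns the Taylor expansion into an identity for $\Delta$ and then ensures the $o(|w-z|^{2})$ remainder is \emph{uniform} in $w$; both issues are handled by the $C^{2}$ regularity of $\rho$ and the boundedness of $D_{c_{0}}$.
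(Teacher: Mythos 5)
Your proof is correct, and since the paper leaves this lemma as an exercise, your argument is exactly the intended one: the complex Taylor expansion identity $2\,\mathrm{Re}\,\Delta(w,z)=\rho(w)-\rho(z)+L_w(\rho,w-z)+R(w,z)$, the uniform Levi-form lower bound $L_w(\rho,\xi)\geq 2c_0|\xi|^2$ fixed at the start of Section \ref{S:local-holom}, and a uniform $o(|w-z|^2)$ remainder estimate (which absorbs one factor of $c_0|w-z|^2$) are precisely the three ingredients needed. One cosmetic remark: the identity $L_w(\rho,z-w)=L_w(\rho,w-z)$ holds because the Levi form is homogeneous of degree two in $\xi$ (invariant under $\xi\mapsto-\xi$), not because it is hermitian --- hermitian symmetry is what makes it real-valued.
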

Here $c_0$ is as in \eqref{E:strictly-plush}. We leave the proof of this lemma, along with the 
corollary below, as an exercise for the reader.
Now let $\chi_1(w, z)$ be a smooth cutoff function such that
\begin{equation}\label{D:chi-0}
\chi_1(w, z) =
\left\{\begin{array}{rcl}
1,&\mbox{if}&|w-z|<\widetilde\epsilon_0/2\\
0,&\mbox{if}&|w-z|>\widetilde\epsilon_0
\end{array}
\right.
\end{equation}
where $\widetilde\epsilon_0$ is as in lemma \ref{L:levi-pol} and set
\begin{equation}\label{D:g-tilde}
\g (w, z) = \chi_1(w, z)\Delta (w,z) +(1-\chi_1(w, z))|w-z|^2,\ \ w, z\in \mathbb C^n.
\end{equation}
\smallskip

\begin{Lem}\label{L:denom-aux}
Suppose $D=\{\rho(w)<0\}$ is strongly pseudo-convex and of class $C^2$. Then, there is 
$\widetilde\delta_0=\widetilde\delta_0(\widetilde\epsilon_0, c_0)>0$ such that
\begin{equation*}
2\mathrm{Re}\,\g (w, z) \geq 
\left\{\begin{array}{rcl}
\rho (w) - \rho (z) + c_0|w-z|^2,&\mathrm{if}& |w-z|\leq \widetilde\epsilon_0/2\\
\\
\rho(w) + 2\widetilde\delta_0,&\mathrm{if}&\widetilde\epsilon_0/2\leq |w-z|<\widetilde\epsilon_0\\
\\
\widetilde\epsilon_0^2,&\mathrm{if}&|w-z|>\widetilde\epsilon_0
\end{array}
\right.
\end{equation*}
whenever 
\begin{equation}\label{E:D-c-0}
w\in D_{\!c_0}=\{w\ |\ \rho(w)<c_0\}
\end{equation}
 and 
$$z\in D_{2\widetilde\delta_0}=\{w\ |\ \rho(w)<2\widetilde\delta_0\}.$$
\end{Lem}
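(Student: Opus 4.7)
\emph{Approach.} The plan is to analyze $2\mathrm{Re}\,\g(w,z)$ case by case, according to the three regimes of $|w-z|$ that correspond to the support structure of the cutoff $\chi_1$ from \eqref{D:chi-0}. The two extreme regimes are immediate. When $|w-z|\leq\widetilde\epsilon_0/2$, we have $\chi_1\equiv 1$, so $\g=\Delta$; the hypotheses of Lemma \ref{L:levi-pol} hold ($|w-z|\leq\widetilde\epsilon_0$ and $w\in D_{c_0}$), and its conclusion is exactly the first inequality of the three-way bound. When $|w-z|>\widetilde\epsilon_0$, we have $\chi_1\equiv 0$, so $\g=|w-z|^2$ is a positive real number, giving $2\mathrm{Re}\,\g(w,z)=2|w-z|^2>2\widetilde\epsilon_0^2>\widetilde\epsilon_0^2$, which is the third inequality.

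\emph{Main case.} The substantive work is the intermediate regime $\widetilde\epsilon_0/2\leq|w-z|\leq\widetilde\epsilon_0$, where $\chi_1\in[0,1]$ and \eqref{D:g-tilde} yields
$$2\mathrm{Re}\,\g(w,z)=\chi_1\cdot 2\mathrm{Re}\,\Delta(w,z)+(1-\chi_1)\cdot 2|w-z|^2.$$
I would show that both summands, without their weights, are individually at least $\rho(w)+2\widetilde\delta_0$; the convex combination then inherits the bound. For the $\Delta$-summand, Lemma \ref{L:levi-pol} combined with $\rho(z)<2\widetilde\delta_0$ (from $z\in D_{2\widetilde\delta_0}$) and $|w-z|^2\geq\widetilde\epsilon_0^2/4$ gives
$$2\mathrm{Re}\,\Delta(w,z)\geq\rho(w)-2\widetilde\delta_0+c_0\widetilde\epsilon_0^2/4,$$
so the condition $\widetilde\delta_0\leq c_0\widetilde\epsilon_0^2/16$ suffices. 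For the $|w-z|^2$-summand, the trivial lower bound $2|w-z|^2\geq\widetilde\epsilon_0^2/2$ must in turn dominate $\rho(w)+2\widetilde\delta_0$.

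\emph{Main obstacle.} The latter requirement is the main obstacle: since $\rho(w)$ can be as large as $c_0$, it forces the compatibility $\widetilde\epsilon_0^2/2\geq c_0+2\widetilde\delta_0$, which is not automatic from Lemma \ref{L:levi-pol} alone (indeed, $\widetilde\epsilon_0$ is itself constrained in terms of $c_0$ by the Taylor-remainder estimate used in proving that lemma). The natural fix is to shrink the collar at the outset, replacing $c_0$ by any $c_0'\leq\widetilde\epsilon_0^2/4$ in the standing hypothesis $w\in D_{c_0}$; this only strengthens the constraint on $w$, leaves the conclusion of Lemma \ref{L:levi-pol} applicable on the smaller collar, and makes the explicit choice
$$\widetilde\delta_0\,:=\,\min\!\left\{\frac{c_0'\widetilde\epsilon_0^2}{16},\;\frac{\widetilde\epsilon_0^2}{8}\right\}$$
meet both bounds simultaneously, thereby completing the intermediate case and the proof.
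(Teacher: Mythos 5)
Your case analysis is exactly the paper's intended argument: the published proof of this lemma consists of the single sentence ``it suffices to choose $0<\widetilde\delta_0<c_0\widetilde\epsilon_0^{\,2}/16$; the desired inequalities then follow from lemma \ref{L:levi-pol}'', which covers your first and third regimes and the $\Delta$-summand of the intermediate one, and nothing else. The obstacle you isolate is therefore not a defect of your write-up but a genuine gap in the lemma as printed. On the set where $0\leq\chi_1<1$ the convex-combination argument needs the unweighted bound $2|w-z|^2\geq\widetilde\epsilon_0^{\,2}/2$ to dominate $\rho(w)+2\widetilde\delta_0$, and since $w$ is only constrained to lie in $D_{c_0}$ this forces $c_0+2\widetilde\delta_0\leq\widetilde\epsilon_0^{\,2}/2$, a relation guaranteed nowhere (as you note, $\widetilde\epsilon_0$ shrinks with the modulus of continuity of $\nabla^2\rho$, so it cannot be arranged by adjusting $c_0$). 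The failure is not merely an artifact of a lossy estimate: if $c_0>2\widetilde\epsilon_0^{\,2}$, take $w$ with $\rho(w)$ slightly above $2\widetilde\epsilon_0^{\,2}$, set $z=w-0.9\,\widetilde\epsilon_0\,\nabla\rho(w)/|\nabla\rho(w)|$ (so $\rho(z)<0$ and $z\in D_{2\widetilde\delta_0}$, while $\widetilde\epsilon_0/2\leq|w-z|<\widetilde\epsilon_0$), and choose an admissible cutoff \eqref{D:chi-0} with $\chi_1=0$ at radius $0.9\,\widetilde\epsilon_0$; then \eqref{D:g-tilde} gives $2\mathrm{Re}\,\g=1.62\,\widetilde\epsilon_0^{\,2}<\rho(w)+2\widetilde\delta_0$.

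Your repair --- shrinking the collar to $D_{c_0'}$ with $c_0'\leq\min\{c_0,\widetilde\epsilon_0^{\,2}/4\}$ and taking $\widetilde\delta_0=\min\{c_0'\widetilde\epsilon_0^{\,2}/16,\ \widetilde\epsilon_0^{\,2}/8\}$ --- is correct and harmless for the sequel: the only role of $D_{c_0}$ in corollary \ref{C:gen-form-pscvx} and lemma \ref{L:gen-form-pscvx-eps} is to provide \emph{some} one-sided collar of $\overline D$ on which $\eta$ is defined, and the constants $\mu_0$ and the set $U_z$ in \eqref{D:U-z} need only be reduced accordingly. An alternative repair that leaves the hypotheses untouched is to weaken the intermediate bound to $\min\{\rho(w)+2\widetilde\delta_0,\ \widetilde\epsilon_0^{\,2}/2\}$; this is what the convex combination actually yields, and it is all that is used downstream, since every later application only requires $\mathrm{Re}\,\g>0$ on sets where $\rho(w)>-\widetilde\delta_0$.
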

\begin{proof}
It suffices to choose $0<\widetilde\delta_0<c_0\widetilde\epsilon_0^{\, 2}/16$: the desired inequalities then follow from
lemma \ref{L:levi-pol}.
\end{proof}
\begin{Cor}\label{C:gen-form-pscvx} Let $D=\{\rho(w)<0\}$ be a bounded, strongly pseudo-convex domain.
 Let
\begin{equation*}
\Delta_j(w, z)\ :=\ \frac{\dee\rho}{\dee\zeta_j}(w) -\frac{1}{2}
\sum\limits_{k=1}^n\frac{\dee^2\rho (w)}{\dee\zeta_j\dee\zeta_k}(w_k-z_k),\quad j=1,\ldots, n,
\end{equation*}
Define
\begin{equation*}\label{D:gen-form-pscvx}
\!\, \eta_j(w, z)\ := \ 
\frac{1}{\g(w, z)}\bigg(\chi_1(w, z)\Delta_j(w, z) + (1-\chi_1(w, z))(\overline{w}_j-\overline{z}_j)\!\bigg)
\end{equation*}
where $\chi_1$ and $\g$ are as in \eqref{D:chi-0} and \eqref{D:g-tilde}, and set
$$
\!\,\eta (w, z) :=\sum\limits_{j=1}^n\!\,\eta_j(w, z)\,dw_j \quad \mbox{for}\ \ 
(w, z)\, \in\ D_{\!c_0}\times D
$$
with $D_{\!c_0}$ as in  \eqref{E:D-c-0}.
Then
we have that $\!\,\eta(w, z)$ is a generating form for $D$, and one 
 may take for $U_z$ in definition \ref{D:gen-form} the set
\begin{equation}\label{D:U-z}
U_z:=\bigg\{w\ |\ \max\{\rho (z), -\widetilde\delta_0\}<\rho(w)<\min\{|\rho (z)|, c_0\}\,\bigg\}.
\end{equation}
\end{Cor}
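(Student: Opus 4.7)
The plan is to verify the two conditions in Definition \ref{D:gen-form} — namely $\bndry D \subset U_z$ and the generating identity $\langle \eta(w,z), w-z\rangle = 1$ for $w \in U_z$ — together with the ambient requirement $U_z \subset \mathbb C^n \setminus \{z\}$ and the non-vanishing of the denominator $\g(w,z)$ on $U_z$ which is what makes $\eta$ well defined in the first place.

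First I would examine the set $U_z$ of \eqref{D:U-z} at the level of $\rho$-sublevels. Because $z \in D$ means $\rho(z)<0$, one has $\max\{\rho(z),-\widetilde\delta_0\} < 0 < \min\{|\rho(z)|,c_0\}$, so the defining interval contains $0$ and hence $\bndry D = \{\rho=0\}\subset U_z$. Also $z\notin U_z$, since $\max\{\rho(z),-\widetilde\delta_0\}\geq \rho(z)$ makes the required strict lower inequality fail. Every $w\in U_z$ satisfies $\rho(w)<c_0$, hence $w\in D_{\!c_0}$, placing us in the region where $\Delta_j$ and $\g$ are defined; similarly $\rho(z)<0<2\widetilde\delta_0$ places $z\in D_{2\widetilde\delta_0}$. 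These two inclusions are exactly the hypotheses of Lemma \ref{L:denom-aux}.

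Second, I would perform the generating-identity calculation. Directly from the definition of $\Delta_j$ one verifies $\sum_{j=1}^n \Delta_j(w,z)(w_j-z_j) = \Delta(w,z)$, while trivially $\sum_{j=1}^n (\overline w_j-\overline z_j)(w_j-z_j) = |w-z|^2$. Combining these with the definition of $\eta_j$ gives
\[
\langle \eta(w,z), w-z\rangle \,=\, \frac{1}{\g(w,z)}\Bigl[\chi_1(w,z)\,\Delta(w,z) + (1-\chi_1(w,z))\,|w-z|^2\Bigr] \,=\, \frac{\g(w,z)}{\g(w,z)} \,=\, 1
\]
by the very definition of $\g$ in \eqref{D:g-tilde}.

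The main (and essentially only nontrivial) obstacle is verifying that $\g(w,z)\neq 0$ throughout $U_z$, so that the division above is legitimate. For this I would exhaust the three regimes of Lemma \ref{L:denom-aux} and check that the constraints built into $U_z$ force each lower bound to be strictly positive. If $|w-z|\leq \widetilde\epsilon_0/2$, the lemma gives $2\,\mathrm{Re}\,\g(w,z)\geq \rho(w)-\rho(z)+c_0|w-z|^2$, which is strictly positive because the lower bound in $U_z$ forces $\rho(w)>\rho(z)$. If $\widetilde\epsilon_0/2 \leq |w-z|<\widetilde\epsilon_0$, the lemma yields $2\,\mathrm{Re}\,\g(w,z)\geq \rho(w)+2\widetilde\delta_0 > \widetilde\delta_0 > 0$ because $\rho(w)>-\widetilde\delta_0$ on $U_z$. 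If $|w-z|>\widetilde\epsilon_0$, then $2\,\mathrm{Re}\,\g(w,z)\geq \widetilde\epsilon_0^{\,2}>0$ unconditionally. Thus $\mathrm{Re}\,\g(w,z)>0$ throughout $U_z$; the careful calibration of the thresholds $c_0,\widetilde\delta_0,\widetilde\epsilon_0$ is exactly what is needed to make this work simultaneously in all three regimes, and with it the verification that $\eta$ is a generating form for $D$ with the asserted $U_z$ is complete.
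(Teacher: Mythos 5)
Your proof is correct, and it follows exactly the route the paper intends (the paper leaves this corollary as an exercise, but its proof of Lemma \ref{L:gen-form-pscvx-eps} explicitly says to proceed ``as in the proof of corollary \ref{C:gen-form-pscvx}'' by showing $\mathrm{Re}\,\g>0$ on $U_z$ via the three-regime bound of Lemma \ref{L:denom-aux}): the algebraic identity $\sum_j\Delta_j(w,z)(w_j-z_j)=\Delta(w,z)$ reduces everything to the non-vanishing of $\g$ on $U_z$, which you verify correctly in each regime. The only cosmetic point is the boundary case $|w-z|=\widetilde\epsilon_0$, omitted in the statement of Lemma \ref{L:denom-aux} itself, where $\chi_1=0$ and $\g=|w-z|^2=\widetilde\epsilon_0^{\,2}>0$ directly.
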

Note, however, that the coefficients of $\!\,\eta$ in this construction are only continuous in the variable $w$ 
 and so the Cauchy-Fantappi\'e form
$\Omega_0(\!\,\eta)$ cannot be defined for such 
$\!\,\eta$ because doing so would require differentiating the coefficients of $\!\,\eta$ with respect to $w$, see \eqref{D:CF-0}. For this reason,
proceeding as in  \cite{Ra}, we refine
the previous construction as follows. For $\widetilde\epsilon_0$ as in
 lemma \ref{L:levi-pol} and for any $0<\epsilon<\widetilde\epsilon_0$,  we let $\tau_{j, k}^\epsilon\in C^\infty(\mathbb C^n)$ be such that
\begin{equation*}
\max\limits_{w\in\overline{D}}
\left|\frac{\dee^2\rho(w)}{\dee\zeta_j\dee\zeta_k} - \tau_{j, k}^\epsilon(w)\right| < \epsilon,\qquad j, k=1,\ldots, n
\end{equation*}
 We now define the following quantities:
 \begin{equation}\label{E:smooth}
\Delta_j^\epsilon(w, z)\ :=\ \frac{\dee\rho}{\dee\zeta_j}(w) -\frac{1}{2}
\sum\limits_{k=1}^n\tau_{j, k}^\epsilon(w)(w_k-z_k),\quad j=1,\ldots, n;
\end{equation}
  \begin{equation*}
 \Delta^{\!\epsilon}(w, z) := \sum\limits_{j=1}^n \Delta_j^\epsilon(w, z)\,(w_j-z_j);
\end{equation*}
 and, for $\chi_1$ as in \eqref{D:chi-0}:
\begin{equation}\label{D:g-eps}
g^{\epsilon}(w, z):= \chi_1(w, z)\Delta^{\!\epsilon} (w,z) +(1-\chi_1(w, z))|w-z|^2;
\end{equation}
\begin{equation*}
\eta_j^{\epsilon}(w, z)\ := \ 
\frac{1}{g^\epsilon(w, z)}\bigg(\chi_1(w, z)\Delta_j^{\!\epsilon}(w, z) + (1-\chi_1(w, z))(\overline{w}_j-\overline{z}_j)\!\bigg)
\end{equation*}
and finally
\begin{equation*}
\eta^\epsilon (w, z) :=\sum\limits_{j=1}^n\eta_j^\epsilon(w, z)\,dw_j\, .
\end{equation*}
\begin{Lem}\label{L:gen-form-pscvx-eps} Let $D=\{\rho(w)<0\}$ be a bounded strongly pseudo-convex domain.
Then, there is $\epsilon_0 = \epsilon_0(c_0)>0$
  such that for any $0<\epsilon<\epsilon_0$ and for any $z\in D$, we have that
    $\eta^{\epsilon}(w, z)$ defined as above is generating at $z$ relative to $D$ with an open set $U_z$ (see definition \ref{D:gen-form}) that does not depend on $\epsilon$. Furthermore, we have that  for each (fixed) $z\in D$
   the coefficients of $\eta^{\epsilon}(\cdot, z)$ are in $C^1(U_z)$.
\end{Lem}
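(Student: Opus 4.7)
\medskip

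\noindent\textbf{Proof plan.}\ The plan is to carry out three steps. The first is essentially an algebraic verification that, wherever $g^\epsilon(w,z)\neq 0$, the identity
\begin{equation*}
\sum_{j=1}^n \eta_j^\epsilon(w,z)(w_j-z_j)=1
\end{equation*}
holds automatically. Indeed, the definition of $\eta_j^\epsilon$ gives
\begin{equation*}
\sum_{j=1}^n \eta_j^\epsilon(w,z)(w_j-z_j)=\frac{1}{g^\epsilon(w,z)}\Bigl(\chi_1(w,z)\,\Delta^\epsilon(w,z)+\bigl(1-\chi_1(w,z)\bigr)|w-z|^2\Bigr),
\end{equation*}
and the right-hand side equals $g^\epsilon(w,z)/g^\epsilon(w,z)=1$ by the very definition of $g^\epsilon$ in \eqref{D:g-eps}, together with the identity $\Delta^\epsilon(w,z)=\sum_j \Delta_j^\epsilon(w,z)(w_j-z_j)$ which is immediate from \eqref{E:smooth}.

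The second and crucial step is to exhibit a single open set $U_z$, independent of $\epsilon$, with $\bndry D\subset U_z$, on which $g^\epsilon(w,z)\neq 0$. The strategy is to perturb the estimates of Lemma \ref{L:denom-aux} (which apply to $g=g^0$) by the error incurred in replacing $\dee^2\rho/\dee\zeta_j\dee\zeta_k$ with $\tau_{j,k}^\epsilon$. Specifically, from the definitions
\begin{equation*}
g^\epsilon(w,z)-g(w,z)=-\frac{\chi_1(w,z)}{2}\sum_{j,k=1}^n \Bigl(\tau_{j,k}^\epsilon(w)-\frac{\dee^2\rho(w)}{\dee\zeta_j\dee\zeta_k}\Bigr)(w_j-z_j)(w_k-z_k),
\end{equation*}
so that $|g^\epsilon-g|\leq \tfrac12 n^2\epsilon\,|w-z|^2$ on the support of $\chi_1$, and $g^\epsilon=g$ elsewhere. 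Inserting this bound into the three cases of Lemma \ref{L:denom-aux}, choosing $\epsilon_0=\epsilon_0(c_0)$ small enough that $n^2\epsilon_0<c_0/2$ and $n^2\epsilon_0\widetilde\epsilon_0^{\,2}<\widetilde\delta_0$, one obtains
\begin{equation*}
2\,\mathrm{Re}\,g^\epsilon(w,z)\geq
\begin{cases}
\rho(w)-\rho(z)+\tfrac{c_0}{2}|w-z|^2,& |w-z|\leq \widetilde\epsilon_0/2,\\[2pt]
\rho(w)+\widetilde\delta_0,& \widetilde\epsilon_0/2\leq |w-z|<\widetilde\epsilon_0,\\[2pt]
\widetilde\epsilon_0^{\,2},& |w-z|>\widetilde\epsilon_0,
\end{cases}
\end{equation*}
for all $0<\epsilon<\epsilon_0$, all $w\in D_{c_0}$ and all $z\in D_{2\widetilde\delta_0}$ (possibly with slightly shrunk constants). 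Repeating the analysis of Corollary \ref{C:gen-form-pscvx} verbatim, we deduce that $\mathrm{Re}\,g^\epsilon(w,z)>0$ for every $w$ in the set
\begin{equation*}
U_z=\bigl\{w\,:\,\max\{\rho(z),-\widetilde\delta_0\}<\rho(w)<\min\{|\rho(z)|,c_0\}\bigr\},
\end{equation*}
which contains $\bndry D$ and is manifestly independent of $\epsilon$. In particular $g^\epsilon(w,z)\neq 0$ on $U_z$, which together with step one establishes the generating property.

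The third step is the $C^1$ regularity claim, which is comparatively soft. On $U_z$ we have $g^\epsilon(\cdot,z)\neq 0$, and the expression \eqref{D:g-eps} shows that $g^\epsilon(\cdot,z)$ is a sum of products of the $C^\infty$ functions $\chi_1(\cdot,z)$ and $\tau_{j,k}^\epsilon$, the $C^1$ function $\dee\rho/\dee\zeta_j$ (because $\rho\in C^2$), and polynomials in $w-z$; hence $g^\epsilon(\cdot,z)\in C^1(U_z)$, and the same reasoning applied to the numerator of $\eta_j^\epsilon$ gives $\eta_j^\epsilon(\cdot,z)\in C^1(U_z)$ as claimed.

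The main obstacle is the uniformity-in-$\epsilon$ aspect of step two: one must verify that the thresholds $\widetilde\epsilon_0,\widetilde\delta_0,c_0$ inherited from Lemma \ref{L:denom-aux} survive the perturbation with only a quantitative degradation that can be absorbed by taking $\epsilon$ small, so that a single $U_z$ works for every $0<\epsilon<\epsilon_0$. Once this uniform lower bound on $\mathrm{Re}\,g^\epsilon$ is in hand, the lemma follows from essentially the same bookkeeping that produced Corollary \ref{C:gen-form-pscvx}.
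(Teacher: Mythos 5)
Your proposal is correct and follows essentially the same route as the paper: the paper bounds $\Delta^\epsilon-\Delta$ and re-runs Lemma \ref{L:denom-aux}, which is the same computation as your direct bound $|g^\epsilon-\g|\le \tfrac12 n^2\epsilon|w-z|^2$ on the support of $\chi_1$, followed by the same $\epsilon$-independent choice of $U_z$ and the argument of Corollary \ref{C:gen-form-pscvx}. The honest degradation of the constant to $c_0/2$ and the explicit smallness conditions on $\epsilon_0$ are fine (the paper glosses over this bookkeeping).
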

\begin{proof} We first observe that $\Delta^{\!\epsilon}$ can be expressed in terms of the Levi polynomial $\Delta$, as follows
\begin{equation*}
\Delta^{\!\epsilon}(w, z) := \Delta (w,z) +
\frac{1}{2}\sum\limits_{j, k=1}^n\!\left(\frac{\dee^2\rho(w)}{\dee\zeta_j\dee\zeta_k} - \tau_{j, k}^\epsilon(w)\!\right)\!\!(w_j-z_j)(w_k-z_k)
\end{equation*}
and so by lemma \ref{L:levi-pol} we have
\begin{equation*}\label{E:levi-pol}
2\mathrm{Re}\,\Delta^{\!\epsilon} (w, z) \geq \rho(w)-\rho(z) + c_0|w-z|^2
\end{equation*}
for any
$$
0<\epsilon<\epsilon_0:=\min\{\widetilde\epsilon_0,\, 2c_0/n^2\}
$$
whenever $w\in D_{\!c_0} =\{\rho(w)<c_0\}$ and $z\in\overline{\mathbb B_{\epsilon_0}(w)}$. Proceeding as in the proof of lemma \ref{L:denom-aux} we then find that
\begin{equation*}
2\mathrm{Re}\,g^{\epsilon} (w, z) \geq 
\left\{\begin{array}{rcl}
\rho (w) - \rho (z) + c_0|w-z|^2,&\mathrm{if}& |w-z|\leq \epsilon_0/2\\
\\
\rho(w) + \mu_0,&\mathrm{if}&\epsilon_0/2\leq |w-z|<\widetilde\epsilon_0\\
\\
\widetilde\epsilon_0^{\ \!2},&\mathrm{if}&|w-z|\geq\widetilde\epsilon_0
\end{array}
\right.
\end{equation*}
for any $0<\epsilon<\epsilon_0$ whenever 
$$w\in D_{\!c_0}=\{w\ |\ \rho(w)<c_0\}$$ and 
$$z\in D_{\mu_0}=\{w\ |\ \rho(w)<\mu_0\}$$ as soon as we
choose 
\begin{equation}\label{D:mu_0}
0<\mu_0<c_0\epsilon_0^{\, 2}/8.
\end{equation}
 We then define the open set $U_z\subset\mathbb C^n\setminus\{z\}$ as in \eqref{D:U-z} but now with  $\delta_0$ in place of $\widetilde\delta_0$ (note that  $U_z$ does not depend on $\epsilon$). Then, proceeding  as in the proof of
corollary \ref{C:gen-form-pscvx} we find that 
\begin{equation*}\label{E:lower-bd-g-eps}
\inf\limits_{w\in U_z}\mbox{Re}\,g^{\epsilon}(w, z)> 0\quad \mbox{for any} \ \
0<\epsilon<\epsilon_0.
\end{equation*}
From this it follows that $\eta^{\,\epsilon}$ is a generating form for $D$; it is clear
from \eqref{E:smooth} that the coefficients of $\eta^{\,\epsilon}$ are in $C^1(U_z)$.
\end{proof}
 lemma \ref{L:gen-form-pscvx-eps} shows that $\eta^{\epsilon}$ satisfies the hypotheses of 
 proposition \ref{P:repr-hol-bndry}; as a consequence we obtain the following results:
\begin{Prop}\label{P:repr-hol-eps-bndry} Let $D$
be a bounded strongly pseudo-convex domain. Then, for any $0<\epsilon<\epsilon_0$ 
we have
\begin{equation*}\label{E:repr-hol-eps-bndry}
f(z) =\!\!
\int\limits_{w\in \bndry D}\!\!\!\!\!
f(w)\, j^*\Omega_0\!\left(\eta^{\epsilon}\right)\!(w, z)\quad \mbox{for any}\ 
f\in \vartheta (D)\cap C(\overline{D}),\ 
z\in D
\end{equation*}
where $\epsilon_0$ and $\eta^\epsilon$ are as in lemma \ref{L:gen-form-pscvx-eps}.
\end{Prop}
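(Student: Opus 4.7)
The plan is to obtain the identity as a direct application of Proposition \ref{P:repr-hol-bndry}, once I verify that $\eta^\epsilon$ fulfills the hypotheses of that earlier reproducing formula. Since $D$ is strongly pseudo-convex, it is of class $C^2$ and hence of class $C^1$, so the boundary-regularity hypothesis required by Proposition \ref{P:repr-hol-bndry} is automatic; the work reduces to producing, for each fixed $z\in D$ and each $0<\epsilon<\epsilon_0$, an open set $U_z\subset\mathbb C^n\setminus\{z\}$ containing $\bndry D$ such that $\eta^\epsilon(\cdot,z)$ is generating at $z$ relative to $D$ with coefficients in $C^1(U_z)$.

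This is precisely the content of Lemma \ref{L:gen-form-pscvx-eps}: the set $U_z$ defined by \eqref{D:U-z} (with $\delta_0$ in place of $\widetilde\delta_0$) contains $\bndry D$, is disjoint from $\{z\}$, and $\eta^\epsilon(\cdot,z)$ is generating at $z$ relative to $D$ with $C^1$ coefficients on $U_z$. The $C^1$ regularity is the whole point of the smoothing carried out in Section \ref{S:local-holom}: the unregularized form of Corollary \ref{C:gen-form-pscvx} involves second derivatives of $\rho\in C^2$ and hence has only continuous coefficients in $w$, which is insufficient for Proposition \ref{P:repr-hol-bndry}. Replacing $\dee^2\rho/\dee\zeta_j\dee\zeta_k$ by the smooth approximant $\tau_{j,k}^\epsilon$ restores the missing derivative, and the choice $\epsilon<\epsilon_0$ preserves a lower bound on $2\mathrm{Re}\,g^\epsilon$ strong enough that the denominator $g^\epsilon(w,z)$ remains non-vanishing on $U_z$, so that $\eta^\epsilon$ is well-defined and $C^1$ there.

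With the hypotheses of Proposition \ref{P:repr-hol-bndry} verified, direct invocation gives
\[
f(z) \;=\; \int_{w\in\bndry D} f(w)\, j^*\Omega_0(\eta^\epsilon)(w,z)
\]
for every $f\in\vartheta(D)\cap C(\overline D)$; since $z\in D$ and $0<\epsilon<\epsilon_0$ were arbitrary, this is the claim. There is essentially no new obstacle at this stage: all of the work has been absorbed into the construction of $\eta^\epsilon$ and into Lemma \ref{L:gen-form-pscvx-eps}, so I expect the author's proof to be no more than a one- or two-line citation of Proposition \ref{P:repr-hol-bndry} through that lemma.
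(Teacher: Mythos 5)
Your proposal is correct and is exactly the paper's argument: the authors state immediately after Lemma \ref{L:gen-form-pscvx-eps} that it shows $\eta^{\epsilon}$ satisfies the hypotheses of Proposition \ref{P:repr-hol-bndry}, and they give no further proof of Proposition \ref{P:repr-hol-eps-bndry}. Your prediction that the proof is a one-line citation through that lemma is accurate, and your identification of the $C^1$ regularity of the coefficients (achieved by the smoothing $\tau_{j,k}^\epsilon$) as the key hypothesis being secured is on point.
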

\begin{Prop}\label{P:repr-hol-eps-interior} Let $D=\{\rho(w)<0\}$ be a bounded strongly pseudo-convex domain. Let 
\begin{equation*}\label{D:h-eps}
\widetilde\eta^{\,\epsilon} (w, z)\ :=\ \frac{g^\epsilon(w, z)}{g^\epsilon(w, z)-\rho(w)}\, \eta^{\,\epsilon}(w, z),\quad w\in \overline D,\ \ z\in D.
\end{equation*}
where $\eta^\epsilon$ is as in lemma \ref{L:gen-form-pscvx-eps}.
 Then, for any $0<\epsilon<\epsilon_0$ 
we have
\begin{equation*}\label{E:repr-hol-eps-interior}
f(z) =
\frac{1}{(2\pi i)^n}\!\!\!
\int\limits_{w\in D}\!\!\!\!
f(w)(\deebar_w\widetilde\eta^{\,\epsilon})^{n}(w, z)\quad 
\mbox{for any}\ f\in \vartheta (D)\cap L^1(D),\ z\in D.
\end{equation*}
\end{Prop}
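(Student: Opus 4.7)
The plan is to deduce this proposition from Proposition~\ref{P:repr-hol-interior} by verifying that $\widetilde\eta^{\,\epsilon}$ is a legitimate candidate for the $\widetilde\eta$ in that proposition's hypothesis, taking the generating form $\eta^{\,\epsilon}$ of Lemma~\ref{L:gen-form-pscvx-eps} as the underlying $\eta$. Since Proposition~\ref{P:repr-hol-eps-bndry} already establishes the reproducing formula at the boundary level, all that is needed is the ``interior/boundary matching'' condition \eqref{E:extension} together with the required $C^1$-regularity of $\widetilde\eta^{\,\epsilon}(\cdot,z)$ on $\overline D$.

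First, I would check that for each fixed $z\in D$ the coefficients of $\widetilde\eta^{\,\epsilon}(\cdot,z)$ are well defined and of class $C^1$ on $\overline D$. Multiplying by $g^{\epsilon}$ in the numerator cancels the $g^{\epsilon}$ in the denominator of $\eta^{\,\epsilon}$, so that the components of $\widetilde\eta^{\,\epsilon}$ are
\[
\widetilde\eta^{\,\epsilon}_{j}(w,z)=\frac{\chi_1(w,z)\Delta_j^{\epsilon}(w,z)+(1-\chi_1(w,z))(\overline{w}_j-\overline{z}_j)}{g^{\epsilon}(w,z)-\rho(w)}\, .
\]
Thus the only thing to verify is that $g^{\epsilon}(w,z)-\rho(w)$ stays bounded away from zero as $w$ ranges over $\overline D$. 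This follows from the three-case lower bound on $2\mathrm{Re}\,g^{\epsilon}$ established in the proof of Lemma~\ref{L:gen-form-pscvx-eps}: subtracting $\rho(w)\le 0$ and using that $-\rho(z)>0$ when $z\in D$ shows $\mathrm{Re}(g^{\epsilon}(w,z)-\rho(w))\ge \tfrac12(-\rho(z))+\tfrac12 c_0|w-z|^2$ on the near-diagonal piece, and the other two pieces are already uniformly bounded below. The numerators are manifestly $C^1$ in $w$ by \eqref{E:smooth} and the smoothness of $\chi_1$, so $\widetilde\eta^{\,\epsilon}(\cdot,z)\in C^1_{(1,0)}(\overline D)$ for each fixed $z\in D$.

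Next, the key observation: on $\bndry D$ one has $\rho(w)\equiv 0$, hence the scalar multiplier satisfies
\[
\frac{g^{\epsilon}(w,z)}{g^{\epsilon}(w,z)-\rho(w)}\bigg|_{w\in \bndry D}=1.
\]
Applying Basic Property \ref{BP-1} with the function $g(w,z):=g^{\epsilon}(w,z)/(g^{\epsilon}(w,z)-\rho(w))$ yields
\[
\Omega_0(\widetilde\eta^{\,\epsilon})(w,z)=\left(\frac{g^{\epsilon}(w,z)}{g^{\epsilon}(w,z)-\rho(w)}\right)^{\!n}\!\Omega_0(\eta^{\,\epsilon})(w,z),
\]
and pulling back via $j:\bndry D\hookrightarrow \mathbb C^n$ collapses the scalar factor to $1$. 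This is precisely the matching condition \eqref{E:extension} required by Proposition~\ref{P:repr-hol-interior}.

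Finally, I would invoke Proposition~\ref{P:repr-hol-interior} with $\eta:=\eta^{\,\epsilon}$ and $\widetilde\eta:=\widetilde\eta^{\,\epsilon}$: its conclusion is exactly the claimed identity
\[
f(z)=\frac{1}{(2\pi i)^n}\int\limits_{w\in D}\!\! f(w)\,(\deebar_w\widetilde\eta^{\,\epsilon})^n(w,z),\qquad f\in\vartheta(D)\cap L^1(D),\ z\in D.
\]
There is no real obstacle here beyond spotting the algebraic trick that $\rho$ vanishes on $\bndry D$; the rest is bookkeeping. The only point where one has to be a bit careful is in confirming nonvanishing of $g^{\epsilon}-\rho$ on $\overline D$ (so that $\widetilde\eta^{\,\epsilon}$ is genuinely $C^1$ up to the boundary), which is the step that relies essentially on the strict plurisubharmonicity of $\rho$ encoded in Lemma~\ref{L:levi-pol}.
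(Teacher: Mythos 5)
Your proposal is correct and follows essentially the same route as the paper: verify that $\mathrm{Re}\bigl(g^{\epsilon}(w,z)-\rho(w)\bigr)>0$ on $\overline D$ (so that $\widetilde\eta^{\,\epsilon}(\cdot,z)\in C^1_{1,0}(\overline D)$), use Basic Property \ref{BP-1} together with the vanishing of $\rho$ on $\bndry D$ to obtain the matching condition \eqref{E:extension}, and then invoke proposition \ref{P:repr-hol-interior}. The only difference is that you spell out the lower bound for $\mathrm{Re}\bigl(g^{\epsilon}-\rho\bigr)$ in more detail than the paper, which simply refers back to the proof of lemma \ref{L:gen-form-pscvx-eps}.
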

\begin{proof}
We claim that $\widetilde\eta^{\,\epsilon}$ 
satisfies the hypotheses of  proposition \ref{P:repr-hol-interior} for any $0<\epsilon<\epsilon_0$.
Indeed, proceeding as in the proof of lemma \ref{L:gen-form-pscvx-eps} we find that
\begin{equation*}\label{E:ineq-closure}
\mbox{Re}\bigg(g^\epsilon(w, z)-\rho(w)\bigg)>0\quad \mbox{for any}\ w\in\overline{D},\ 
\mbox{ for any}\ 
\ z\in D
\end{equation*}
and for any $0<\epsilon<\epsilon_0$;
from this it follows that 
 $$
 \widetilde\eta^{\,\epsilon}(\cdot, z)\in\ C^1_{1, 0}(\overline{D})\quad \mbox{for any}\ 0<\epsilon<\epsilon_0.
 $$
Moreover, as a consequence of basic property \ref{BP-1} we have
$$
\Omega_0(\widetilde\eta^{\,\epsilon})(\cdot, z) = 
\left(\frac{g^\epsilon(\cdot, z)}{g^\epsilon(\cdot, z)-\rho(\cdot)}\right)^{\!\!n}
\!
\Omega_0(\eta^{\,\epsilon})(\cdot, z)\quad \mbox{for any}\ 0<\epsilon<\epsilon_0,
$$
but this grants
$$
j^*\Omega_0(\widetilde\eta^{\,\epsilon})(\cdot, z)  =
j^*\Omega_0(\eta^{\,\epsilon})(\cdot, z) \quad \mbox{for any}\ 0<\epsilon<\epsilon_0.
$$
The conclusion now follows from proposition \ref{P:repr-hol-interior}.
\end{proof}
\bigskip

\section{Correction terms}\label{S:correction}
 propositions \ref{P:repr-hol-eps-bndry} and \ref{P:repr-hol-eps-interior} have a fundamental limitation: it is that these propositions  employ kernels, namely 
  $j^*\Omega_0(\eta^{\,\epsilon})(w, z)$ and 
$(\deebar_w\widetilde\eta^{\,\epsilon})^{n}(w, z)$,
 that are only \emph{locally} holomorphic as functions of $z$, that is, they are holomorphic only for $z\in 
 \mathbb B_{\epsilon_0/2}(w)$. In this section we address this issue by constructing for each of these kernels a ``correction'' term obtained by solving an ad-hoc $\deebar$-problem in the $z$-variable.
 
Throughout this section we shift our focus from the $w$-variable to $z$, that is: we fix $w\in \overline{D}$, we regard $z$ as a variable and we define the ``parabolic'' region

\begin{equation*}
\mathcal P_w := \{z\ |\ \rho (z) + \rho (w)< c_0|w-z|^2\}.
\end{equation*}
\smallskip

\noindent 
 The region $\mathcal P_w$ has the following properties:

\begin{equation*}
w\in\overline{D}\ \Rightarrow \ D\subset \mathcal P_w\ ;
\end{equation*}
\begin{equation*}
w\in\bndry D \ \Rightarrow \ z:= w\in \bndry  \mathcal P_w.
\end{equation*}
\smallskip

\noindent As a consequence of these properties we have that
\begin{equation*}\label{E:intersection}
\mathcal P_w \cap \mathbb B_{\epsilon_0/2}(w) \neq \emptyset
\end{equation*}
\begin{Lem}\label{L:parabolic}
Let $D=\{z\ |\ \rho(z)<0\}$ be a bounded strongly pseudo-convex domain.
Then, there is $\mu_0 = \mu_0(c_0)>0$ such that 
\begin{equation}\label{E:mu-0}
D_{\mu_0}=\{z\ |\ \rho(z)<\mu_0\}\subset \mathcal P_w\cup \mathbb B_{\epsilon_0/2}(w)
\end{equation}
for any (fixed) $w\in \overline{D}$.
 Furthermore,
 there is a bounded strongly pseudo-convex $\Omega$ of class 
 $C^\infty$ such that 
 \begin{equation*}
 D_{\mu_0/2}=\{z\ |\ \rho(z)<\mu_0/2\}\ \subset\  \Omega\  \subset\  D_{\mu_0}=\{z\ |\ \rho(z)<\mu_0\}
 \end{equation*}
 where $\mu_0>0$ is as in \eqref{E:mu-0}.
 \end{Lem}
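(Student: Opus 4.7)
The plan is to treat the two assertions separately. For the inclusion \eqref{E:mu-0}, the argument is a direct calculation: if $z\notin \mathbb B_{\epsilon_0/2}(w)$ then $c_0|w-z|^2\geq c_0\epsilon_0^2/4$, while for any $z\in D_{\mu_0}$ and any $w\in\overline{D}$ one has
$$\rho(z)+\rho(w)<\mu_0+0=\mu_0,$$
since $\rho(w)\leq 0$. Thus any choice $\mu_0\leq c_0\epsilon_0^2/4$ — in particular the constraint $0<\mu_0<c_0\epsilon_0^2/8$ already fixed in \eqref{D:mu_0} — forces $\rho(z)+\rho(w)<c_0|w-z|^2$, i.e.\ $z\in\mathcal P_w$, which settles the first inclusion.

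For the smooth approximation $\Omega$, my approach is to mollify the strictly plurisubharmonic defining function $\rho$. Since $\rho\in C^2$ has Levi form bounded below by $2c_0$ on a neighborhood of $\overline D$ (recall \eqref{E:strongly-L-pscvx-III}), convolving with a standard mollifier $\phi_\delta$ produces $\rho_\delta:=\rho*\phi_\delta\in C^\infty$ for which $\|\rho_\delta-\rho\|_{C^2}\to 0$ uniformly on any compact subset of the open neighborhood of $\overline{D_{\mu_0}}$ on which $\rho$ is defined. For $\delta$ small enough this simultaneously guarantees
\begin{enumerate}
\item[\textbf{(i)}] $\sup|\rho_\delta-\rho|<\mu_0/10$ on a neighborhood of $\overline{D_{\mu_0}}$;
\item[\textbf{(ii)}] $L_w(\rho_\delta,\xi)\geq c_0|\xi|^2$ on the same neighborhood, since strict positivity of the Levi form is an open condition in $C^2$ (as recalled at the end of Section \ref{S:convexity}).
\end{enumerate}
By Sard's theorem, almost every $c\in(\mu_0/2+\mu_0/10,\,\mu_0-\mu_0/10)$ is a regular value of $\rho_\delta$; fixing one such $c$ and setting $\Omega:=\{\rho_\delta<c\}$ yields a smoothly bounded domain. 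The inclusion $D_{\mu_0/2}\subset\Omega\subset D_{\mu_0}$ follows from (i), and strong pseudo-convexity of $\Omega$ follows from (ii), since $\rho_\delta$ is then a strictly plurisubharmonic defining function for $\Omega$.

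The main (and essentially only) delicate point is verifying that $C^2$-closeness transfers the Levi-form bound from $\rho$ to $\rho_\delta$. This is routine: $\partial_{j\bar k}\rho_\delta=(\partial_{j\bar k}\rho)*\phi_\delta$ converges uniformly on compacts to $\partial_{j\bar k}\rho$ by continuity of the second partials, so the hermitian form $L_w(\rho_\delta,\cdot)$ converges uniformly to $L_w(\rho,\cdot)$ on, say, the unit sphere of $\mathbb C^n$ crossed with a compact neighborhood of $\overline{D_{\mu_0}}$; positivity then persists for $\delta$ small. Once (i) and (ii) are in hand, the remainder of the argument is entirely elementary.
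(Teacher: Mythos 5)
Your proof is correct and follows essentially the same route as the paper's: the first inclusion is the identical direct computation with $\mu_0$ constrained as in \eqref{D:mu_0}, and for the second conclusion you simply realize the paper's unspecified smooth $C^2$-approximation of $\rho$ by explicit mollification and then take a sublevel set. Your extra appeal to Sard's theorem to pick a regular value $c$ is a nice touch, since it makes explicit why $\bndry\Omega$ is a genuine $C^\infty$ hypersurface --- a point the paper's choice of the fixed level $3\mu_0/4$ leaves to the reader as an exercise.
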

 \begin{figure}[h!]
 \centering 
 \includegraphics[width=0.5 \textwidth]
{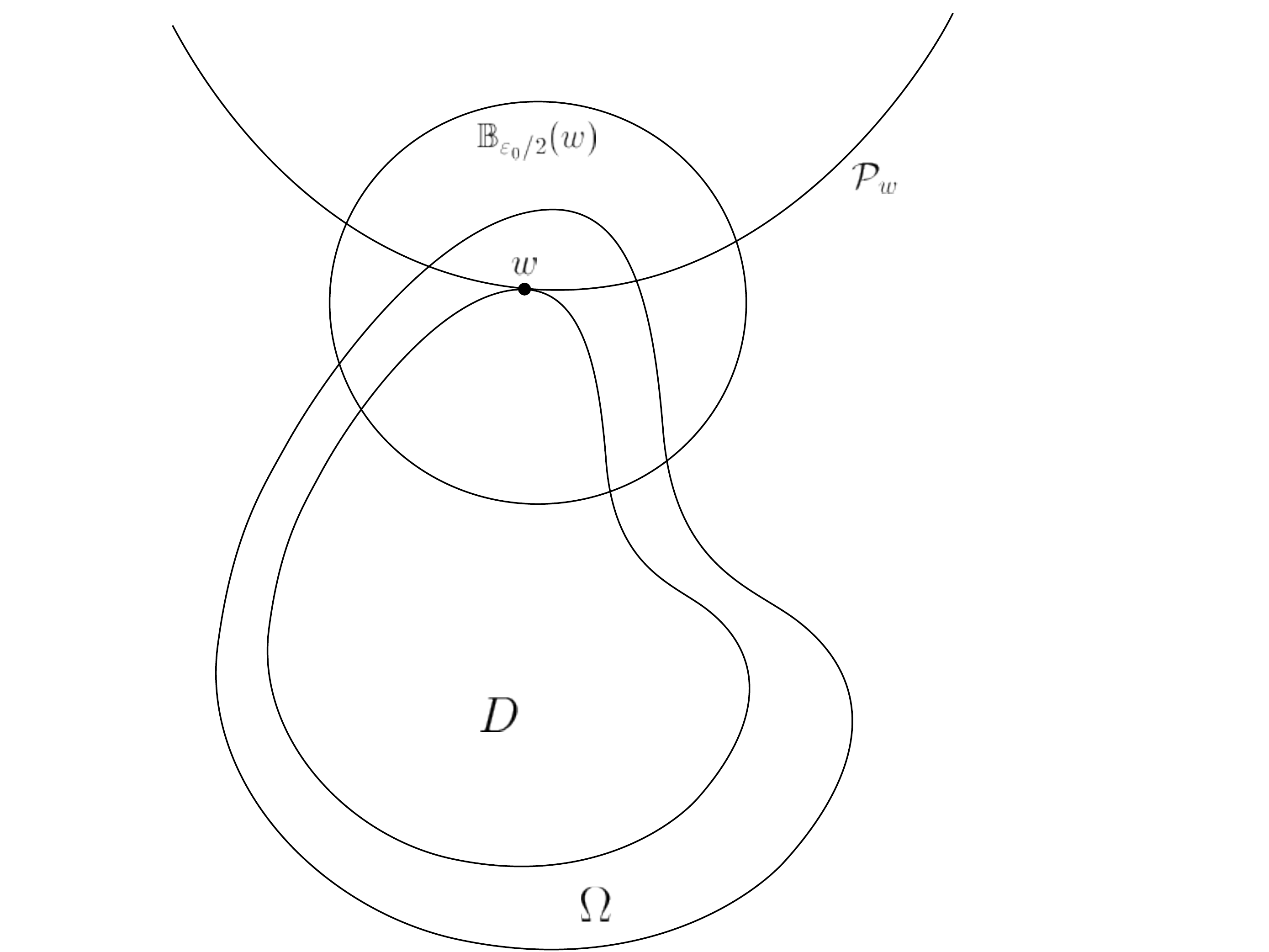}
\caption{The region $\mathcal P_w$ in the case when $w\in\bndry D$.}
\label{fig:Figure1}
\end{figure}
  
\emph{Proof of lemma \ref{L:parabolic}}. 
For the first conclusion, we claim that it suffices to choose $\mu_0 =\mu_0(c_0)$ as in
\eqref{D:mu_0}. Indeed, given
 $z\in D_{\mu_0}$, if $|w-z|\geq \epsilon_0/2$ then $\rho(z)\leq c_0|w-z|^2/2$ and since 
 $\rho(w)\leq 0$ (as $w\in \overline{D}$) it follows that $z\in \mathcal P_w$. On the other hand, 
 if $|w-z|< \epsilon_0/2$ then of course $z\in \mathbb B_{\epsilon_0/2}(w)$.

To prove the second conclusion note that, since $\rho$ (the defining function of $D$)
is of class $C^2$ and is strictly plurishubharmonic in a neighborhood of $\overline{D}$, there is 
 $\widetilde\rho\in C^\infty (U(\overline{D}))$ such that
 
$$
\|\widetilde\rho -\rho\|_{C^2(U(\overline{D}))}\leq \mu_0/8
$$
and
$$
L_z(\widetilde\rho, \xi)>0\quad \mbox{for any}\ z\in U'(\overline{D})\ \ \mbox{and for any }\ \ \xi\in\mathbb C^n,
$$
see \eqref{E:strongly-L-pscvx-II} and \eqref{E:strongly-L-pscvx-I}. Define
\begin{equation*}\label{D:Omega}
\Omega := \left\{z\ \bigg|\ \widetilde\rho(z) - \frac{3\mu_0}{4}<0\right\}
\end{equation*}
Then $\Omega$ is smooth and strongly pseudo-convex; we leave it as an exercise
 for the reader to verify that $\Omega$ satisfies the desired inclusions: $D_{\mu_0/2}\subset\Omega\subset D_{\mu_0}$.
\qed
\smallskip

lemma \ref{L:parabolic} shows that (the smooth and strongly pseudo-convex domain) $\Omega$ has the following properties, see Figure \ref{fig:Figure1}:

$$
\overline{D}\subset \Omega,\quad\mbox{and}\quad  \overline{\Omega}\ \subset
 \mathcal P_w\cup \mathbb B_{\epsilon_0/2}(w),\quad \mbox{for every}\ \ w\in\overline{D}.
$$

\noindent We now set up two $\overline{\partial}$-problems on $\Omega$.
 For the first $\deebar$-problem, we begin by observing that  if $w$ is in $\bndry D$ and $z$
 is in $\mathcal P_w$ then $\mbox{Re}\,g^\epsilon (w, z)>0$
 (that  this must be so can be seen from the inequalities for Re\,$g^{\epsilon}(w, z)$ that were obtained in the proof of lemma \ref{L:gen-form-pscvx-eps}), and so the coefficients of $\eta^{\,\epsilon}(w, \cdot)$ are in 
   $C^\infty(\mathcal P_w)$ whenever $w\in\bndry D$.
 Now fix $w\in \bndry D$ arbitrarily
 and denote by $\H(w, z) = \H_\epsilon (w, z)$ the following double form, which is of type $(0, 1)$ in $z$, and of type $(n,n-1)$ 
in $w$ 
\begin{equation} \label{E:def-d-bar-data-bndry}
\H(w, z)\ =\
\Bigg\{\!\!
\begin{array}{lll}
- \overline{\partial}_z \Omega_0(\eta^{\,\epsilon}) (w, z),
  &\text{if}\ z \in \mathcal{P}_w\\
 \quad &\\
 \quad\  0, &\text{if}\  z\in \mathbb B_{\epsilon_0/2}(w)
 \end{array}
\end{equation}
Now for each fixed $w\in\bndry D$, the coefficients 
of $\Omega_0(w, z)$ are holomorphic in $z$ for $z \in \mathbb B_{\epsilon_0/2}(w)$ 
and so $\H(w, z)$ is defined consistently in 
$\mathcal{P}_w \cup \mathbb B_{\epsilon_0/2}(w)$. It is also clear
that $\H(w, z)$ is $C^\infty$ for $z \in \mathcal{P}_w \cup \mathbb{B}_{\epsilon_0/2}(w)$, and as such it depends continuously on $w \in \bndry D$. Moreover we have that $\overline{\partial}_z \H(w, z) = 0$, for $z \in \mathcal{P}_w \cup \mathbb{B}_{\epsilon_0/2}(w) , \ w \in \bndry D$.
\smallskip

 For the second $\deebar$-problem,  we begin by observing that  if $w$ is in $\overline D$ and $z$
 is in $\mathcal P_w$ then $\mbox{Re}\left(g^\epsilon (w, z)-\rho (w)\right)>0$
 (that  this must be so can again be seen from the inequalities for Re\,$g^{\epsilon}(w, z)$ in the proof of lemma \ref{L:gen-form-pscvx-eps}), and so the coefficients of $\widetilde\eta^{\,\epsilon}(w, \cdot)$ are in 
   $C^\infty(\mathcal P_w)$ whenever $w\in\overline D$.
Fixing $w \in \overline{D}$ arbitrarily, we denote by 
 $\F(w, z) = \F_\epsilon (w, z)$ the following double form, which is of type $(0, 1)$ in $z$, and of type $(n,n)$ 
in $w$ 
\begin{equation*} \label{E:def-d-bar-data-interior}
\F(w, z)\ =\
\Bigg\{\!\!
\begin{array}{lll}
- \overline{\partial}_z (\deebar_w\widetilde\eta^{\,\epsilon})^n (w, z),
  &\text{if}\ z \in \mathcal{P}_w\\
 \quad &\\
 \quad\  0, &\text{if}\  z\in \mathbb B_{\epsilon_0/2}(w)
 \end{array}
\end{equation*}
Now for each fixed $w\in\overline{D}$, the coefficients 
of $\widetilde\eta^{\,\epsilon}(w, z)$ are holomorphic 
in $z$ for $z \in \mathbb B_{\epsilon_0/2}(w)$ 
and so $\F(w, z)$ is defined consistently in 
$\mathcal{P}_w \cup \mathbb B_{\epsilon_0/2}(w)$. It is also clear 
that $\F(w, z)$ is $C^\infty$ for $z \in \mathcal{P}_w \cup 
 \mathbb B_{\epsilon_0/2}(w)$, and as such it depends continuously on $w \in \overline{D}$. Moreover we have that
 $\overline{\partial}_z \F(w, z) = 0$, for $z \in \mathcal{P}_w \cup  \mathbb B_{\epsilon_0/2}(w) , \ w \in \overline{D}$.
\smallskip

Now let $\s= \s_z$ be the solution operator, giving the normal solution of the problem $\overline{\partial} u = \alpha$ in $\Omega$, via the $\overline{\partial}$-Neumann problem, so that $u = \s(\alpha)$ satisfies the above whenever $\alpha$ is a $(0, 1)$-form with $\overline{\partial} \alpha = 0$. We set
 \begin{equation}\label{E:def-correction-szego}
 \Sz^2_\epsilon (w, z) = \s_z (\H(w, \cdot) ),\quad w\in\bndry D
 \end{equation}
and
 \begin{equation*}\label{E:def-correction-bergman}
 \\B^2_\epsilon (w, z) = \s_z (\F(w, \cdot) ),\quad w\in\overline{D}.
 \end{equation*}
  Then by the regularity properties of $\s$, 
for which see e.g., \cite[chapters 4 and 5]{CS}, or \cite{FK}, we have that 
$\Sz^2_\epsilon (w, z)$ is in $C^\infty (\overline{\Omega})$, as a function of $z$, and is continuous for $w \in \bndry D$. Moreover $\overline{\partial}_z \left( \Sz^2_\epsilon (w, z ) \right) 
= -\overline\dee_z\Omega_0(\eta^{\, \epsilon}) (w, z)= 0$, for $z\in D$ (recall that $D\subset \mathcal P_w$)
so 
$$\overline{\partial}_z \left( \Omega_0(\eta^{\epsilon}) + \Sz^2_\epsilon) \right)\!\!(w, z) = 0
\quad \mbox{for} \ z \in D\quad \mbox{and}\ \  w\in \bndry D.$$

\noindent We similarly have that 
$\B^2_\epsilon (w, z)$ is in $C^\infty (\overline{\Omega})$, as a function of $z$, and is continuous for $w \in \overline{D}$ and, furthermore
$$\overline{\partial}_z \left( (\deebar_w\widetilde\eta^{\,\epsilon})^n + \\B^2_\epsilon) \right)\!\!(w, z) = 0
\quad \mbox{for} \ z \in D\quad \mbox{and}\ \  w\in \overline D.$$
\bigskip

\section{reproducing formulas: globally  holomorphic kernels} \label{S:global-hol}
  At last, in this section we complete the construction of a number of integral operators
  that satisfy all three of the fundamental conditions (a) - (c) that were presented in Section \ref{S:CI-n}. The crucial step in all these constructions  is to produce integral kernels that are globally holomorphic in $D$ as functions of $z$. For strongly pseudo-convex domains, this goal is achieved by adding to each of the (locally holomorphic) Cauchy-Fantappi\'e forms that were produced in Section \ref{S:local-holom} the ad-hoc ``correction'' term that was constructed in Section \ref{S:correction}; the resulting two families of operators are denoted $\{\mathbf \Sz_\epsilon\}_\epsilon$ (acting on $C(\bndry D)$) and $\{\mathbf\B_\epsilon\}_\epsilon$ (acting on $L^1(D)$).
   In the case of strongly $\mathbb C$-linearly convex domains of class $C^2$, there is no need for
    ``correction'': a natural, globally holomorphic Cauchy-Fantappi\'e form is readily available that
    gives rise to an operator acting on $C(\bndry D)$ (even on $L^1(\bndry D)$), called the Cauchy-Leray Integral 
    $\mathbf\Sz_{L}$ and, in the more restrictive setting of strongly convex domains, also to an operator $\mathbf\B_L$ that acts on $L^1(D)$.  (As we shall see in Section \ref{S:main}, in the special case when the domain is the unit ball, 
          the Cauchy-Leray integral $\mathbf\Sz_{L}$ agrees with the Cauchy-Szeg\"o projection $\mathbf S$, while the operator $\mathbf\B_L$ agrees with the Bergman projection $\mathbf\B$.)
All the operators that are produced in this section satisfy, by their very construction, conditions (a) and (c) 
    in Section \ref{S:CI-n}, and  we show in propositions 
    \ref{P:repr-hol-eps-bndry-corr} through \ref{P:repr-CL-interior} that 
    they also satisfy condition (b)
     (the reproducing property for holomorphic functions). 
\subsection{Strongly pseudo-convex domains.} For  $\eta_\epsilon$ is as in proposition \ref{P:repr-hol-eps-bndry} we now write
\begin{equation*}
\Sz^1_\epsilon (w, z) = \Omega_0(\eta^{\epsilon})(w, z)
\end{equation*}
and let
\begin{equation*}
\Sz_\epsilon (w, z) = j^*\!\!\left(\Sz^1_\epsilon (w, z) + \Sz^2_\epsilon (w, z)\right)
\end{equation*}
and we define the operator
\begin{equation}\label{D:HR-integral}
\mathbf \Sz_\epsilon f(z) =
\int\limits_{w\in\bndry D}\!\!\! \!f(w)\,\Sz_\epsilon (w, z),\quad z\in D,\ \ f\in C(\bndry D).
\end{equation}
\begin{Prop}\label{P:repr-hol-eps-bndry-corr}
 Let $D$
be a bounded strongly pseudo-convex domain.
Then, for any $0<\epsilon<\epsilon_0$ 
we have
\begin{equation*}\label{E:repr-hol-eps-bndry-corr}
f(z) = \mathbf \Sz_\epsilon f(z),
\quad \mbox{for any}\ 
f\in \vartheta (D)\cap C(\overline{D}),\ 
z\in D.
\end{equation*}
\end{Prop}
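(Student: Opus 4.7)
\emph{Proof proposal.} My plan is to split the kernel as $\Sz_\epsilon = j^*\Sz^1_\epsilon + j^*\Sz^2_\epsilon$ and to recognize that, by Proposition \ref{P:repr-hol-eps-bndry} applied to $\eta^{\,\epsilon}$, the first summand already reproduces $f$:
$$
f(z) \;=\; \int\limits_{w\in\bndry D} f(w)\, j^*\Sz^1_\epsilon(w,z),\qquad z\in D,\ f\in\vartheta(D)\cap C(\overline D).
$$
The proposition therefore reduces to establishing the vanishing identity
$$
\int\limits_{w\in\bndry D} f(w)\, j^*\Sz^2_\epsilon(w,z) \;=\; 0 \qquad \text{for each fixed } z\in D.
$$

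To prove this, I would apply Stokes' theorem on $D$ to the form $f(w)\,\Sz^2_\epsilon(w,z)$, which has $w$-bidegree $(n,n-1)$. Because $f$ is holomorphic, $\deebar_w f=0$, and because $\dee_w f\wedge \Sz^2_\epsilon$ has $w$-bidegree $(n+1,n-1)=0$ on $\mathbb C^n$, the integrand collapses via Leibniz:
$$
\int\limits_{\bndry D} f(w)\, j^*\Sz^2_\epsilon(w,z) \;=\; \int\limits_{D} f(w)\, \deebar_w \Sz^2_\epsilon(w,z).
$$
So the task is to show that the integrand on the right is identically zero, i.e.\ that $\deebar_w\Sz^2_\epsilon(w,z)\equiv 0$ for $w\in D$.

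For this last step I would commute $\deebar_w$ past the solution operator $\s_z$, which acts purely in the $z$-variable, yielding
$$
\deebar_w \Sz^2_\epsilon(w,z) \;=\; \s_z\bigl(\deebar_w\H(w,\cdot)\bigr).
$$
On $\mathcal P_w$, one has $\H(w,z) = -\deebar_z\Omega_0(\eta^{\,\epsilon})(w,z)$; since $\deebar_w$ and $\deebar_z$ commute (up to sign) on smooth data, $\deebar_w\H = \pm\,\deebar_z\deebar_w\Omega_0(\eta^{\,\epsilon})$. Now $\Omega_0(\eta^{\,\epsilon})$ has $w$-bidegree $(n,n-1)$, so $d_w\Omega_0(\eta^{\,\epsilon})=\deebar_w\Omega_0(\eta^{\,\epsilon})$, and Basic Property \ref{BP-2} in its equivalent formulation $d_w\Omega_0(\eta^{\,\epsilon})=0$ on $U_z$ forces $\deebar_w\Omega_0(\eta^{\,\epsilon})=0$. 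On the complementary piece $\mathbb B_{\epsilon_0/2}(w)$ we have $\H\equiv 0$ by construction, so $\deebar_w\H=0$ there as well. Hence $\s_z(\deebar_w\H)=0$ and the desired vanishing follows.

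The main technical obstacle is the regularity bookkeeping required to rigorously justify the two manipulations: applying Stokes to $f\,\Sz^2_\epsilon$ as a form in $w$ on $\overline D$, and pulling $\deebar_w$ across $\s_z$. Both rest on the smoothness of $\Omega$ together with the regularity of the $\deebar$-Neumann solution operator $\s$ cited in Section \ref{S:correction}; they also rely on the $\epsilon$-smoothing of the second derivatives of $\rho$, which is precisely what makes $\eta^{\,\epsilon}$ of class $C^1$ in $w$ on $U_z$ and hence makes the $C^2$-approximation argument from the proof of Proposition \ref{P:repr-hol-bndry} available here. Should the joint regularity be delicate, I would first prove the identity for $f\in\vartheta(U(\overline D))$ (where one can approximate $\eta^{\,\epsilon}$ by $(1,0)$-forms with $C^2$ coefficients as in that proof, so that Basic Property \ref{BP-2} applies literally) and then pass to the general $f\in\vartheta(D)\cap C(\overline D)$ via the exhaustion $D_k\nearrow D$ of \eqref{D:D-k}.
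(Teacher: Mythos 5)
Your opening reduction --- peeling off $j^*\Sz^1_\epsilon$ via proposition \ref{P:repr-hol-eps-bndry} and reducing the claim to $\int_{\bndry D} f\, j^*\Sz^2_\epsilon(\cdot,z)=0$ --- is exactly the paper's first step. But the way you propose to prove the vanishing has two genuine gaps. First, you apply Stokes' theorem on $D$ in the $w$-variable to $f(w)\,\Sz^2_\epsilon(w,z)$. This requires $\Sz^2_\epsilon(\cdot,z)$, and hence the $\deebar$-data $\H(w,\cdot)$, to be defined and $C^1$ for $w$ ranging over all of $\overline D$; but the construction in Section \ref{S:correction} only defines $\H(w,\cdot)$ and $\Sz^2_\epsilon(w,z)=\s_z(\H(w,\cdot))$ for $w\in\bndry D$. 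The smoothness of $\eta^{\,\epsilon}(w,\cdot)$ on $\mathcal P_w$ rests on $\mathrm{Re}\,g^\epsilon(w,\lam)>0$ there, and the inequalities in the proof of lemma \ref{L:gen-form-pscvx-eps} only give $2\,\mathrm{Re}\,g^\epsilon(w,\lam)\geq \rho(w)+\mu_0$ in the range $\epsilon_0/2\leq|w-\lam|<\widetilde\epsilon_0$: this lower bound is positive when $\rho(w)=0$ but becomes useless once $w$ moves into $D$ past depth $\mu_0$, where $g^\epsilon(w,\cdot)$ may vanish at points of $\overline\Omega$ outside $\mathbb B_{\epsilon_0/2}(w)$. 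So there is no smooth extension of $\Sz^2_\epsilon(\cdot,z)$ to the interior on which to run Stokes (this is precisely why the interior kernel uses the modified form $\widetilde\eta^{\,\epsilon}$, whose denominator carries the extra $-\rho(w)$). Second, your key cancellation invokes $\deebar_w\Omega_0(\eta^{\,\epsilon})=0$ via the reformulation $d_w\Omega_0(\eta)=(\deebar_w\eta)^n$; the paper states explicitly that this reformulation requires the coefficients of $\eta(\cdot,z)$ to be in $C^2$, whereas $\eta^{\,\epsilon}(\cdot,z)$ is only $C^1$ in $w$ (since $\rho$ is only $C^2$). Writing $\deebar_w\H=\pm\,\deebar_z\deebar_w\Omega_0(\eta^{\,\epsilon})$ thus asks for two $w$-derivatives of $\eta^{\,\epsilon}$ that do not exist, and the $C^1$-approximation you invoke does not control them.

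The paper's proof avoids both problems by never letting $w$ leave $\bndry D$: Fubini and the realization of $\s_z$ as integrals over $\Omega$ and $\bndry\Omega$ reduce everything to showing $\int_{M_\lam} f(w)\,j^*\deebar_\lam\Omega_0(\eta^{\epsilon})(w,\lam)=0$ for each fixed $\lam\in\overline\Omega$, where $M_\lam=\{w\in\bndry D:|w-\lam|\geq\epsilon_0/2\}$. One then approximates $\eta^{\,\epsilon}$ by forms $\eta^\epsilon_\ell$ with $C^2$ coefficients (built from $\rho_\ell\in C^3$), uses basic property \ref{BP-4} in the form $\deebar_\lam\Omega_0(\eta^\epsilon_\ell)=-d_w\Omega_1(\eta^\epsilon_\ell)$ --- note that $\Omega_1$ involves only first derivatives --- and applies Stokes on the $(2n-1)$-dimensional manifold $M_\lam$; the resulting boundary integral over $\bndry M_\lam=\bndry D\cap\{|w-\lam|=\epsilon_0/2\}$ vanishes because $\Omega_1$ contains a factor $\deebar_\lam\eta^\epsilon_\ell$, which is zero where the coefficients are holomorphic in $\lam$. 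If you want to salvage your outline, replace the interior Stokes argument with this boundary-manifold Stokes argument, and route the extra derivative through $\Omega_1$ rather than through $\deebar_w\Omega_0$.
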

\begin{proof}
By proposition \ref{P:repr-hol-eps-bndry}, for any $f\in \vartheta (D)\cap C(\overline{D})$ we have
$$
\int\limits_{w\in \bndry D}\!\!\!\!\!
f(w)\, \Sz_\epsilon(w, z) = 
f(z) +\!\!\! 
\int\limits_{w\in \bndry D}\!\!\!\!\!
f(w)\, j^*\Sz^2_\epsilon(w, z)\ \mbox{for any}\ 
z\in D,
$$
and so it suffices to show that
\begin{equation*}
\int\limits_{w\in \bndry D}\!\!\!\!\!
f(w)\, j^*\!\Sz^2_\epsilon(w, z)\, =\, 0\  \mbox{for any}\ 
z\in D.
\end{equation*}
By Fubini's theorem and the definition of $\Sz^2_\epsilon$, see \eqref{E:def-correction-szego},  we have
\begin{equation*}
\int\limits_{w\in \bndry D}\!\!\!\!\!
f(w)\, j^*\Sz^2_\epsilon(w, z) \,=\,
\mathcal S_z\!\!\left(\ \int\limits_{w\in \bndry D}\!\!\!\!\!
f(w)\, j^*H(w, \cdot)\right)
\end{equation*}
where $H(w,\cdot)$ is as in \eqref{E:def-d-bar-data-bndry}. Since the solution operator $\mathcal S_z$ is realized as a combinations of integrals over $\Omega$ and $\bndry \Omega$, the desired conclusion will be a consequence of the following claim:
\begin{equation*}\label{E:conclusion-bndry-corr}
\int\limits_{w\in \bndry D}\!\!\!\!\!
f(w)\, j^*H(w, \lam)\,=\,0\quad\mbox{for any}\ \ \lam\in \overline\Omega,
\end{equation*}
and since $\overline\Omega\subset\mathcal P_w$ for any $w\in\bndry D$, proving the latter
amounts to showing that
\begin{equation}\label{E:conclusion-bndry-corr}
\int\limits_{w\in M_\lam}\!\!\!\!\!
f(w)\, j^*\deebar_\lam\Omega_0(\eta^{\epsilon})(w, \lam)\,=\,0\quad\mbox{for any}\ \ \lam\in \overline\Omega,
\end{equation}
where we have set
\begin{equation}\label{D:M}
M_\lam=\{w\in\bndry D\ |\ |w-\lam|\geq\epsilon_0/2\},
\end{equation}
see \eqref{E:def-d-bar-data-bndry} and Figure \ref{fig:Figure2} below. To this end, we fix $\lam\in\overline\Omega$ arbitrarily; we claim that there is a sequence of forms $(\eta^\epsilon_\ell(\cdot, \lam))_\ell$ with the following properties:
\begin{itemize}
\item[{\em a.}\ ] $\eta^\epsilon_\ell(\cdot, \lam)$
 is generating at $\lam$ \ relative to $D$;
\item[{\em b.}\ ] $\eta^\epsilon_\ell(\cdot, \lam)$ 
has coefficients in $C^2(U_\lam)$ with $ U_\lam$
  as in definition \ref{D:gen-form};
 \item[{\em c.}\ ] as $\ell\to\infty$,  we have 
 that 
 $$j^*\Omega_0(\eta^\epsilon_\ell)(\cdot, \lam)\to j^*\Omega_0(\eta^\epsilon)(\cdot, \lam)\quad
 \mbox{uniformly on}\ \  \bndry D;
 $$
 \item[{\em d.}\ ] the coefficients of $\eta^\epsilon_\ell(w, \lam)$ are holomorphic in 
 $ \lam\in\mathbb B_{\epsilon_0/2}(w)$ for any $w\in \bndry D$.
\end{itemize}
Note that \eqref{E:conclusion-bndry-corr} will follow from item {\em c.} above if we can prove that
\begin{equation}\label{E:conclusion-bndry-corr-ell}
\int\limits_{w\in M_\lam}
\!\!\!\!\!
f(w)\, j^*\deebar_\lam\Omega_0(\eta^{\epsilon}_\ell)(w, \lam)\,=\,0\quad
\mbox{for any}\ \ \ell.
\end{equation}
We postpone the construction of $\eta^\epsilon_\ell(\cdot, \lam)$ to later below, and instead proceed to proving \eqref{E:conclusion-bndry-corr-ell} assuming the existence of the 
$\{\eta^\epsilon_\ell(\cdot, \lam)\}_\ell$.
 On account
 of items {\em a.} and {\em b.} above along with basic property \ref{BP-4} as stated in
 \eqref{E:BP-4-aux}, proving
  \eqref{E:conclusion-bndry-corr-ell} is equivalent to showing that
\begin{equation*}\label{E:conclusion-bndry-corr-ell-aux}
\int\limits_{w\in M_\lam}
\!\!\!\!\!
f(w)\, j^*\deebar_w\Omega_1(\eta^{\epsilon}_\ell)(w, \lam)\,=\,0\quad
\mbox{for any}\ \ \ell.
\end{equation*}
 To this end, we first consider the case when $f\in\vartheta (D)\cap C^1(\overline D)$, as in this case we have that
 $$
 f(w) j^*\deebar_w\Omega_1(\eta^{\epsilon}_\ell)(w, \lam) =
 j^*\deebar_w\left(f\,\Omega_1(\eta^{\epsilon}_\ell)\right)\!(w, \lam)
= j^*\!d_w\left(f\,\Omega_1(\eta^{\epsilon}_\ell)\right)\!(w, \lam)
$$
(where in the last identity we have used the fact that $\deebar_w\Omega_1=d_w\Omega_1$ because $\Omega_1(\eta^{\epsilon}_\ell)$ is of type
$(n, n-2)$ in $w$). But the latter equals
$$
{\tt d}_wj^*\!\left(f\,\Omega_1(\eta^{\epsilon}_\ell)\right)\!(w, \lam)
 $$
 where ${\tt d}_w$ denotes the exterior derivative operator for $M_\lam$ viewed as a real manifold of dimension $2n-1$. Applying Stokes' theorem on $M_\lam$ to the form 
 $\alpha(w):= j^*\!\left(f\,\Omega_1(\eta^{\epsilon}_\ell)\right)\!(w, \lam)\in
  C^1_{n, n-2}(M_\lam)$ we obtain
 $$
 \int\limits_{w\in M_\lam}
\!\!\!\!\!
f(w)\, j^*\deebar_w\Omega_1(\eta^{\epsilon}_\ell)(w, \lam)\,=\,
\int\limits_{w\in \bndry M_\lam}
\!\!\!\!\!
f(w)\, j^*\Omega_1(\eta^{\epsilon}_\ell)(w, \lam)\,
 $$
 but
 $$
 j^*\Omega_1(\eta^{\epsilon}_\ell)(w, \lam) =0\quad \mbox{for any}\ 
 w\in\bndry M_\lam=\bndry D\cap\{|w-\lam|=\epsilon_0/2\}
 $$
 because the coefficients of $\eta^\epsilon_\ell(w,\lam)$ are holomorphic in 
 $\lam\in \mathbb B_{\epsilon/2}(w)$ for any $\bndry D$, see
  \eqref{D:CF-1} and item {\em d.} above.
  This concludes the proof of proposition \ref{P:repr-hol-eps-bndry-corr} in the case when
  $f\in \vartheta (D)\cap C^1(\overline D)$. 
  
  To prove the proposition in the case when $f\in\vartheta(D)\cap C^0(\overline D)$,
   we fix $z\in D$ and
choose $\delta = \delta (z)>0$ such that 
$$
z\in D_{-\delta} = \{\rho <-\delta \}\ \ \mathrm{for\ any}\ \ \delta\leq \delta(z). 
$$
Then we have that
$$
f\in\vartheta (D_{-\delta})\cap C^1(\overline D_{-\delta})\quad \mathrm{for\ any}\ \ \delta\leq \delta(z)
$$
and so by the previous argument we have
\begin{equation}\label{E:I-delta}
\int\limits_{w\in\bndry D_{-\delta}}\!\!\!\!\!\!\!\! f(w)j_{-\delta}^*\Sz^2_{\epsilon}(w, z) = 0\quad 
\mathrm{for\ any}\ \ \delta\leq \delta(z),
\end{equation}
where $j^*_{-\delta}$ denotes the pullback under the inclusion:
$\bndry D_{-\delta}\hookrightarrow \mathbb C^n$. For $\delta$ sufficiently small there is a natural
one-to-one and onto projection along the inner normal direction:
$$
\Lambda_\delta: \bndry D\to \bndry D_{-\delta},
$$
and because $D$ is of class $C^2$ one can show that this projection tends 
  in
the $C^1$-norm to the identity $\mathbf{1}_{\bndry D}$, that is we have that
$$
\|\mathbf{1}_{\bndry D} -\Lambda_\delta\|_{C^1(\bndry D)}\to 0\quad \mbox{as}\ \ \delta\to 0.
$$
Using this projection one may then express the integral on $\bndry D_{-\delta}$ in identity \eqref{E:I-delta} as an integral
on $\bndry D$ for an integrand that now also depends on $\Lambda_{\delta}$ and its Jacobian,
and it follows from the above considerations that
$$
\int\limits_{w\in\bndry D_{-\delta}}\!\!\!\!\!\!\!\! f(w)j_{-\delta}^*\Sz^2_{\epsilon}(w, z) \ \ 
\to 
\int\limits_{w\in\bndry D}\!\!\!\!\! f(w)j^*\Sz^2_{\epsilon}(w, z)\quad \mbox{as}\ \ \delta\to 0.
$$



We are left to construct, for each fixed $\lam\in\overline\Omega$,  
the sequence
$\{\eta^{\,\epsilon}_\ell(\cdot, \lam)\}_\ell$
 that was invoked earlier on.
To this end, set 
\begin{equation*}
U:=D\cup \bigcup\limits_{z\in D} U_z
\end{equation*}
where $U_z$ is the open neighborhood of $\bndry D$ that was determined in lemma
\ref{L:gen-form-pscvx-eps}. 
Consider a sequence of real-valued functions $\{\rho_\ell\}_\ell\subset C^3(\mathbb C^n)$ such that
\begin{equation*}\label{E:C-2-conv}
\|\rho_\ell-\rho\|_{C^1(U)}\to 0\quad \mbox{as\ }\ \ell\to\infty,
\end{equation*}
and, for $\lam\in\overline\Omega$ fixed arbitrarily, set
 \begin{equation*}\label{E:smooth-ell}
\Delta_{j,\,\ell}^\epsilon(w, \lam)\ :=\ \frac{\dee\rho_\ell}{\dee\zeta_j}(w) -\frac{1}{2}
\sum\limits_{k=1}^n\tau_{j, k}^\epsilon(w)(w_k-\lam_k),\quad j=1,\ldots, n;
\end{equation*}

  \begin{equation*}
 \Delta^{\!\epsilon}_\ell(w, \lam) := \sum\limits_{j=1}^n \Delta_{j,\,\ell}^\epsilon(w, \lam)\,(w_j-\lam_j);
\end{equation*}
 
 and, for $\chi_1$ as in \eqref{D:chi-0}:

\begin{equation*}
g^{\epsilon}_\ell(w, \lam):= \chi_1(w, \lam)\Delta^{\!\epsilon}_\ell (w,\lam) +(1-\chi_1(w, \lam))|w-\lam|^2;
\end{equation*}

\begin{equation*}
\eta_{j,\, \ell}^{\epsilon}(w, \lam)\ := \ 
\frac{1}{g^\epsilon_\ell(w, \lam)}\bigg(\chi_1(w, \lam)\Delta_{j,\,\ell}^{\!\epsilon}(w, \lam) + (1-\chi_1(w, \lam))(\overline{w}_j-\overline{\lam}_j)\!\bigg)
\end{equation*}
and, finally
\begin{equation*}
\eta^\epsilon_\ell (w, \lam) :=\sum\limits_{j=1}^n\eta_{j,\,\ell}^\epsilon(w, \lam)\,dw_j\, ,
\end{equation*}
We leave it as an exercise for the reader to verify that 
$\{\eta^{\,\epsilon}_\ell(\cdot, \lam)\}_\ell$ has the desired properties.
\end{proof}

 \begin{figure}[h!]
 \centering 
 \includegraphics[width=0.5 \textwidth]
{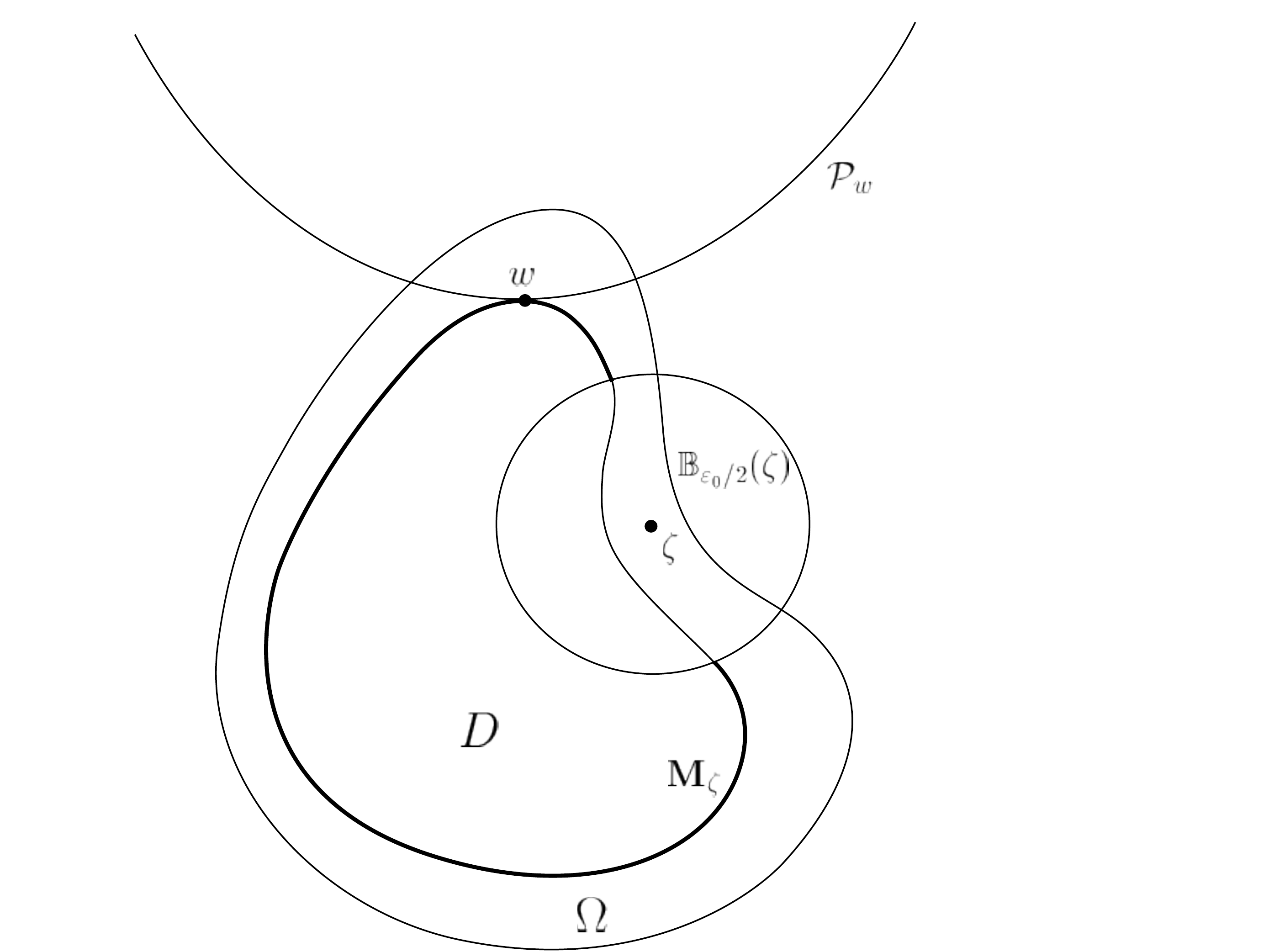}
\caption{The manifold $M_\zeta$ in the proof of proposition \ref{P:repr-hol-eps-bndry-corr}.}
\label{fig:Figure2}
\end{figure}

Next,  for $\widetilde\eta^{\,\epsilon}$ is as in proposition \ref{P:repr-hol-eps-interior}, we write

\begin{equation*}
 \B^1_\epsilon(w, z) = \frac{1}{(2\pi i)^n}(\deebar_w\widetilde\eta^{\,\epsilon})^n
\end{equation*}
and
\begin{equation*}
\B_\epsilon (w, z) := \left( \B^1_\epsilon + \B^2_\epsilon\right)\!(w, z),\quad w\in\overline D,\quad z\in \overline{\Omega},
\end{equation*}

and we define the operator
\begin{equation}\label{D:HRL-op}
\mathbf B_\epsilon f(z) =\int\limits_{w\in D}\!\!\! f(w)\, \B_\epsilon (w, z), \quad z\in D, \quad f\in L^1(D).
\end{equation}
\begin{Prop}\label{P:repr-hol-eps-interior-corr}  Let $D$
be a bounded strongly pseudo-convex domain.
Then, for any $0<\epsilon<\epsilon_0$ 
we have
\begin{equation*}\label{E:repr-hol-eps-interior-corr}
f(z) = \mathbf B_\epsilon f(z),
\quad \mbox{for any}\ 
f\in \vartheta (D)\cap L^1(D),\ 
z\in D.
\end{equation*}
\end{Prop}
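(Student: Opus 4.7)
The plan is to follow the architecture of the proof of Proposition \ref{P:repr-hol-eps-bndry-corr}, with Proposition \ref{P:repr-hol-eps-interior} replacing Proposition \ref{P:repr-hol-eps-bndry} as the uncorrected starting point. Since Proposition \ref{P:repr-hol-eps-interior} already gives
\begin{equation*}
\int\limits_{w\in D}\!\!\!\!f(w)\,\B^1_\epsilon(w,z)\,=\,f(z),\qquad z\in D,
\end{equation*}
the task reduces to showing $\int_D f(w)\,\B^2_\epsilon(w,z)=0$ for every $z\in D$ and every $f\in\vartheta(D)\cap L^1(D)$. Since $\B^2_\epsilon(w,z)=\s_z(\F(w,\cdot))$ and $\s_z$ is realized as a combination of integrals over $\Omega$ and $\bndry\Omega$, Fubini reduces this in turn to the pointwise identity
\begin{equation*}
I(\lam):=\int\limits_{w\in D}\!\!\!\!f(w)\,\F(w,\lam)\,=\,0\quad\mbox{for every}\ \lam\in\overline\Omega.
\end{equation*}
By the definition of $\F$ together with the inclusion $\overline\Omega\subset\mathcal P_w\cup\mathbb B_{\epsilon_0/2}(w)$ from Lemma \ref{L:parabolic}, this amounts to proving
\begin{equation*}
\int\limits_{w\in N_\lam}\!\!\!\!\!f(w)\,\deebar_\lam(\deebar_w\widetilde\eta^{\,\epsilon})^n(w,\lam)\,=\,0,\qquad N_\lam:=D\cap\{|w-\lam|>\epsilon_0/2\}.
\end{equation*}

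To obtain the key identity I would first replace $\widetilde\eta^{\,\epsilon}$ by a sequence $\{\widetilde\eta^{\,\epsilon}_\ell\}$ of $(1,0)$-forms with coefficients in $C^2(\overline D)$, built from a $C^3$ smoothing $\rho_\ell$ of $\rho$ exactly as in the construction at the end of the proof of Proposition \ref{P:repr-hol-eps-bndry-corr}; these converge to $\widetilde\eta^{\,\epsilon}(\cdot,\lam)$ in $C^1(\overline{N_\lam})$ and, crucially, retain the property that their coefficients are holomorphic in $\lam\in\mathbb B_{\epsilon_0/2}(w)$ for each fixed $w\in\overline D$. For $\widetilde\eta^{\,\epsilon}_\ell\in C^2$ one has $(2\pi i)^{-n}(\deebar_w\widetilde\eta^{\,\epsilon}_\ell)^n=\deebar_w\Omega_0(\widetilde\eta^{\,\epsilon}_\ell)$; combining this with $\deebar_w f=0$, the anticommutativity $\deebar_\lam\deebar_w=-\deebar_w\deebar_\lam$, and the vanishing of $\dee_w$ on $(n,n-1)$-forms in $w$, yields
\begin{equation*}
f(w)\,\deebar_\lam(\deebar_w\widetilde\eta^{\,\epsilon}_\ell)^n\,=\,-(2\pi i)^n\,d_w\!\bigl(f(w)\,\deebar_\lam\Omega_0(\widetilde\eta^{\,\epsilon}_\ell)\bigr).
\end{equation*}
Stokes' theorem on $N_\lam$ then reduces the integral to contributions over the two components of $\bndry N_\lam$: the sphere $\bndry\mathbb B_{\epsilon_0/2}(\lam)$ and the boundary $\bndry D$. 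The sphere contribution vanishes pointwise because on a neighborhood of $\{|w-\lam|=\epsilon_0/2\}$ the coefficients of $\widetilde\eta^{\,\epsilon}_\ell(w,\cdot)$ are holomorphic, whence $\deebar_\lam\Omega_0(\widetilde\eta^{\,\epsilon}_\ell)\equiv 0$ there. For the $\bndry D$ contribution I would use that $\rho=0$ on $\bndry D$ makes the factor $g^\epsilon_\ell/(g^\epsilon_\ell-\rho_\ell)$ equal to $1$, so Basic Property \ref{BP-1} gives $j^*\Omega_0(\widetilde\eta^{\,\epsilon}_\ell)=j^*\Omega_0(\eta^{\,\epsilon}_\ell)$; since $\eta^{\,\epsilon}_\ell$ \emph{is} generating and $C^2$, Basic Property \ref{BP-4} converts the integrand to $-j^*d_w\Omega_1(\eta^{\,\epsilon}_\ell)$, and a second application of Stokes on the closed manifold $\bndry D$, together with the observation that $df\wedge\Omega_1(\eta^{\,\epsilon}_\ell)=0$ for holomorphic $f$ (since $\dee_w f\wedge\Omega_1$ has $w$-bidegree $(n+1,n-2)=0$ and $\deebar_w f=0$), forces this contribution to vanish as well. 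Passing $\ell\to\infty$ using the $C^1$-convergence then produces $I(\lam)=0$.

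The main obstacle will be the careful coordination between the two forms in play: the interior reproducing formula is attached to $\widetilde\eta^{\,\epsilon}$, which is \emph{not} generating (so Basic Property \ref{BP-4} does not apply to it directly), while the decisive calculation on $\bndry D$ requires the generating form $\eta^{\,\epsilon}$; the bridge is the identity $j^*\Omega_0(\widetilde\eta^{\,\epsilon})=j^*\Omega_0(\eta^{\,\epsilon})$ on $\bndry D$, which holds precisely where it is needed. Once the argument is established for $f\in\vartheta(U(\overline D))$, the extension to general $f\in\vartheta(D)\cap L^1(D)$ is accomplished via the exhaustion $D_k=\{\rho<-1/k\}$, applying the previous step inside each $D_k$ and passing to the limit by the Lebesgue dominated convergence theorem, as in the endgame of the proof of Proposition \ref{P:repr-hol-interior}.
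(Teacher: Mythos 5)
Your reduction is exactly the paper's: Proposition \ref{P:repr-hol-eps-interior} disposes of $\B^1_\epsilon$, so everything rests on showing $\int_D f\,\B^2_\epsilon(\cdot,z)=0$ --- and at this point the paper simply cites \cite[proposition 3.2]{LS-2}, so what you are really supplying is that external argument. Your plan of reducing it, via Fubini and Stokes on $N_\lam$, to the boundary identity \eqref{E:conclusion-bndry-corr-ell} already established in the proof of Proposition \ref{P:repr-hol-eps-bndry-corr} is sound, and the key algebraic steps (exactness of $f\,\deebar_\lam(\deebar_w\widetilde\eta^{\,\epsilon}_\ell)^n$ in $w$, vanishing on the sphere by holomorphy in $\lam$, the bridge $j^*\Omega_0(\widetilde\eta^{\,\epsilon})=j^*\Omega_0(\eta^{\,\epsilon})$ on $\bndry D$) are correct. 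One small repair: if you build $\widetilde\eta^{\,\epsilon}_\ell$ with $\rho_\ell$ in the denominator shift, the factor $g^\epsilon_\ell/(g^\epsilon_\ell-\rho_\ell)$ is \emph{not} $1$ on $\bndry D$, since the smoothing $\rho_\ell$ need not vanish where $\rho$ does; keep the original $\rho(w)$ in that one factor (this still gives $C^2$ coefficients, as only $\rho$ itself and not its second derivatives enters there), or carry the resulting $o(1)$ error through the limit in $\ell$.

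The genuine gap is the endgame for general $f\in\vartheta(D)\cap L^1(D)$. Your Stokes argument terminates in a boundary integral of $f$ over $\bndry D$, so it requires $f$ to be at least continuous up to the boundary; for $f$ merely in $\vartheta L^1$ you propose to run it on $D_k=\{\rho<-1/k\}$ and invoke dominated convergence ``as in Proposition \ref{P:repr-hol-interior}''. But the two situations are not parallel: there the identity being extended relates two solid integrals of a fixed integrand, whereas here the decisive cancellation on the boundary piece uses $j^*\Omega_0(\widetilde\eta^{\,\epsilon}_\ell)=j^*\Omega_0(\eta^{\,\epsilon}_\ell)$, which holds precisely because $\rho=0$ on $\bndry D$. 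On $\bndry D_k$ one has $\rho=-1/k$, the factor becomes $g^\epsilon_\ell/(g^\epsilon_\ell+1/k)\neq 1$, and the $\bndry D_k$ contribution is no longer zero but of size $O(k^{-1})\int_{\bndry D_k}|f|\,d\sigma_k$. For $f\in L^1(D)$ this need not tend to $0$ for every $k$; one only knows $\int_0^\delta\bigl(\int_{\bndry D_t}|f|\,d\sigma_t\bigr)\,dt<\infty$, hence $t\int_{\bndry D_t}|f|\,d\sigma_t\to0$ along a subsequence, which --- combined with dominated convergence giving $\int_{D_k}f\,\F(\cdot,\lam)\to\int_{D}f\,\F(\cdot,\lam)$ along the full sequence --- does close the argument. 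That subsequence step (or an alternative, such as density in $\vartheta L^1(D)$ of functions holomorphic near $\overline D$) is the missing ingredient your one-line appeal to dominated convergence elides.
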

\begin{proof}
By proposition \ref{P:repr-hol-eps-interior}, for any $f\in \vartheta (D)\cap L^1(D)$ we have
$$
\int\limits_{w\in D}\!\!\!\!
f(w)\B_\epsilon(w, z) = 
f(z) +\!\!\! 
\int\limits_{w\in D}\!\!\!\!
f(w)\B^2_\epsilon(w, z)\ \mbox{for any}\ 
z\in D,
$$
and so it suffices to show that
\begin{equation*}
\int\limits_{w\in D}\!\!\!\!
f(w)\B^2_\epsilon(w, z)\, =\, 0\  \mbox{for any}\ 
z\in D.
\end{equation*}
For the proof of this assertion we refer to \cite[proposition 3.2]{LS-2}.
\end{proof}

\bigskip

\subsection{}{\bf Strictly $\mathbb C$-linearly convex domains: the Cauchy-Leray integral.}\label{SS:spc}
 Let $D$ be a bounded, strictly $\mathbb C$-linearly convex domain.
 We claim that if $\rho$ is (any) defining function for such a domain,
 and if $U$ is an open neighborhood of $\bndry D$ such that $\nabla\rho (w)\neq 0$ for any $w\in U$, then 
\begin{equation}\label{E:gen-ball}
\eta(w, z) := \frac{\dee\rho(w)}{\langle\dee\rho (w), w-z\rangle}
\end{equation}
is a generating form for $D$;
indeed, by 
lemma \ref{L:strictly-C-lin-cvx} for any $z\in D$ 
there is an open set $U_z\subset \mathbb C^n\setminus\{z\}$
 such
 that $\langle\dee\rho (w), w-z\rangle\neq 0$ for any $w\in U_z$ and 
 $\bndry D\subset U_z$; thus the coefficients of 
  $\eta (\cdot, z)$ are in $C(U_z)$ and \eqref{D:generating-2} holds. It is clear 
  from \eqref{E:gen-ball} that
 $\langle\eta (w, z), w-z\rangle = 1$ for any $w\in U_z$, so \eqref{D:generating-1} holds for any $z\in D$, as well.
It follows that proposition \ref{P:repr-hol-bndry} applies to any strictly $\mathbb C$-linearly convex domain $D$  with
 $\eta$ chosen as above under the further assumption that $D$ be of class $C^2$ (which is required to ensure that the coefficients of $\eta (\cdot, z)$ are in $C^1(U_z)$). 
  The form
\begin{equation}\label{E:szego}
\Sz_L(w, z)\! =\! j^*\Omega_0\!\left(\frac{\dee\rho (w)}{\langle\dee\rho (w), w-z\rangle}\right) 
\! =\! 
j^*\!\!\left(\frac{\dee\rho (w)\wedge (\deebar\dee\rho)^{n-1}(w)}{(2\pi i\langle\dee\rho (w), w-z\rangle)^n}\right)
\end{equation}
is called the {\em Cauchy-Leray kernel for $D$}.
It is clear that
   the coefficients of 
the Cauchy-Leray kernel
 are \emph{globally} holomorphic with respect to $z\in D$:
 indeed the denominator $j^*\langle \dee\rho(w), w-z\rangle^n$ is polynomial in the variable $z$, and by the strict $\mathbb C$-linear convexity of $D$ we have that 
 $j^*\langle \dee\rho(w), w-z\rangle^n\neq 0$ for any $z\in D$ and for any $w\in\bndry D$,
   see \eqref{E:strictly-C-lin-cvx}. The resulting integral operator:
  \begin{equation}\label{D:CL-smooth}
  \mathbf \Sz_L f(z) =  
    \int\limits_{w\in\bndry D}\!\!\!\!\! f(w)\, C_L(w, z)
  \ \ z\in D,
  \end{equation}
  is called the {\em Cauchy-Leray Integral}.
Under the further assumption that $D$ be strictly convex (as opposed to strictly 
{\em $\mathbb C$-linearly} convex), for each fixed $z\in D$ one may
extend
 $\eta (\cdot, z)$ holomorphically to the interior of $D$ as follows
\begin{equation}\label{D:h}
   \widetilde\eta (\cdot, z)\!:=\!
    \left(\!\frac{\langle\dee\rho(\cdot), \cdot-z\rangle}{\langle\dee\rho(\cdot), \cdot-z\rangle-\rho(\cdot)}\!\right)\!
  \eta(\cdot, z) =   
  \frac{\dee\rho(\cdot)}{\langle\dee\rho(\cdot), \cdot-z\rangle-\rho(\cdot)}
\end{equation} 
  The following lemma shows that if  $D$ is sufficiently smooth (again of class $C^2$)
   then $\widetilde\eta$  satisfies the hypotheses of proposition
    \ref{P:repr-hol-interior}, and so in particular the operator
    \begin{equation*}\label{D:solid-CL}
    \mathbf \B_L f(z) =\int\limits_{w\in D}\!\!\! f(w)\,\B_L(w, z)
       \end{equation*}
  with 
  \begin{equation}\label{D:bergman-ker}
  \B_L(w, z)=\frac{1}{(2\pi i)^n}(\deebar_w\widetilde\eta)^n(w, z)
  \end{equation}
 and  $\widetilde \eta$ given by \eqref{D:h},  reproduces holomorphic functions\footnote{Note that $\widetilde\eta$ does not satisfy the stronger condition   \eqref{E:extension-stronger} that was discussed earlier.}. 
    
    \begin{Lem} If $D=\{\rho<0\}\subset\mathbb C^n$
     is strictly convex and of class $C^2$, then for each fixed $z\in D$ we have that 
     $\widetilde\eta (\cdot, z)$ given by \eqref{D:h} has coefficients in 
     $C^1(\overline{D})$
        and satisfies the hypotheses of proposition \ref{P:repr-hol-interior}.
       \end{Lem}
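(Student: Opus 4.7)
The plan is to verify directly the two hypotheses of Proposition \ref{P:repr-hol-interior}: first, that the coefficients of $\widetilde\eta(\cdot, z)$ lie in $C^1(\overline D)$; and second, that $j^*\Omega_0(\widetilde\eta)(\cdot, z) = j^*\Omega_0(\eta)(\cdot, z)$ on $\bndry D$, where $\eta$ is the Cauchy-Leray generating form \eqref{E:gen-ball}.

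For the regularity, since $\rho \in C^2(\mathbb C^n)$ the numerator $\dee\rho$ in \eqref{D:h} already has $C^1$ coefficients, so everything reduces to showing that the denominator $\Phi(w) := \langle\dee\rho(w), w-z\rangle - \rho(w)$ does not vanish for $w \in \overline D$ (with $z\in D$ fixed). This is the one step that genuinely uses the strict convexity hypothesis. I would invoke the defining inequality of strict convexity for the real-valued $C^2$ function $\rho$, namely
$$
\rho(z) > \rho(w) + 2\,\mathrm{Re}\,\langle\dee\rho(w), z-w\rangle \quad \text{for } w \neq z,
$$
which rearranges to
$$
2\,\mathrm{Re}\,\Phi(w) > -\rho(w) - \rho(z).
$$
For $w \in \overline D$ one has $\rho(w)\leq 0$, and for $z \in D$ one has $\rho(z) < 0$, so the right-hand side is strictly positive. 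The remaining case $w = z$ is trivial since $\Phi(z) = -\rho(z) > 0$. Thus $\mathrm{Re}\,\Phi(w) > 0$ throughout $\overline D$, so $\Phi$ is bounded away from zero on $\overline D$, and the quotient $\dee\rho/\Phi$ has coefficients in $C^1(\overline D)$.

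For the boundary identity, the key observation is that $\widetilde\eta$ is a scalar multiple of $\eta$, namely
$$
\widetilde\eta(w, z) = g(w) \cdot \eta(w, z), \qquad g(w) := \frac{\langle\dee\rho(w), w-z\rangle}{\langle\dee\rho(w), w-z\rangle - \rho(w)},
$$
so by Basic Property \ref{BP-1} we immediately obtain $\Omega_0(\widetilde\eta) = g^n\, \Omega_0(\eta)$. On $\bndry D$ one has $\rho(w) = 0$ and hence $g(w) = 1$, so pulling back by $j$ gives precisely $j^*\Omega_0(\widetilde\eta)(\cdot,z) = j^*\Omega_0(\eta)(\cdot,z)$. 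The main (indeed only) obstacle is the non-vanishing of $\Phi$ on $\overline D$, which is handled in one line by the strict convexity inequality above; once that is settled, both required conclusions follow formally.
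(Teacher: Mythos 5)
Your proof is correct and follows the same overall architecture as the paper's: reduce everything to the strict positivity of $\mathrm{Re}\bigl(\langle\dee\rho(w),w-z\rangle-\rho(w)\bigr)$ on $\overline D$, and then get the boundary identity \eqref{E:extension} from Basic Property \ref{BP-1} together with $\rho|_{\bndry D}=0$ — that second half is verbatim what the paper does. The only divergence is in how the positivity of the denominator is obtained. The paper first asserts $\mathrm{Re}\langle\dee\rho(w),w-z\rangle>0$ for all $w\in\overline D\setminus\{z\}$ (citing H\"ormander) and then adds the nonnegative term $-\rho(w)$, noting it is strictly positive at $w=z$; you instead apply the first-order convexity inequality $\rho(z)>\rho(w)+2\,\mathrm{Re}\langle\dee\rho(w),z-w\rangle$ and rearrange to get $2\,\mathrm{Re}\,\Phi(w)>-\rho(w)-\rho(z)>0$ in one stroke. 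Your version is actually the more robust of the two: the paper's intermediate claim fails as literally stated at interior critical points of $\rho$ (e.g.\ $w=0$ for the ball with $\rho=|w|^2-1$), whereas your combined inequality survives there because $-\rho(w)$ picks up the slack. One caveat applies to both arguments and is worth making explicit: the gradient inequality you invoke is a property of $\rho$ as a convex \emph{function} on all of $\overline D$, while the paper's notion of strict convexity of the \emph{domain} (Remark \ref{R:classical-cvx}) only constrains $\dee\rho$ at boundary points $w$; for an ill-chosen defining function the denominator of \eqref{D:h} can in fact vanish inside $D$, so one must know (or arrange, as the H\"ormander reference implicitly does) that $\rho$ is convex on $\overline D$ before your inequality — or the paper's — is available.
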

   \begin{proof}
   In order to prove the first assertion it suffices to show that
   \begin{equation}\label{E:Lem}
   \mbox{Re}\left(\langle\dee\rho(w), w-z\rangle\right)
   -\rho (w) > 0\quad \mbox{for any }\ w\in\overline{D},\ \ z\in D.
   \end{equation}
   Indeed, one first observes that if $D$ is strictly convex and sufficiently smooth
   then
   $$
       \mbox{Re}\langle\dee\rho(w), w-z\rangle>0\quad \mbox{for any}\  \ w\in \overline{D}\setminus\{z\}
   $$
 (see \cite{Ho}  for the proof of this fact) so that $\mbox{Re}\langle\dee\rho(w), w-z\rangle$ is non-negative in $\overline{D}$ and it vanishes only at $w= z$. On the other other hand 
 the term $-\rho(w)$ is non-negative for any $w\in \overline{D}$, and if $w=z\in D$ then $-\rho(w)=-\rho(z)>0$. This proves \eqref{E:Lem} 
and it follows that the coefficients of 
$\widetilde\eta(\cdot, z)$ are in $C^1(\overline{D})$. By basic property \ref{BP-1} we have
 \begin{equation*}
   \Omega_0(\widetilde\eta) (\cdot, z) =
   \left(\!\frac{\langle\dee\rho(\cdot), \cdot-z\rangle}{\langle\dee\rho(\cdot), \cdot-z\rangle-\rho(\cdot)}\!\right)^{\!n}\!\Omega_0(\eta)(\cdot, z);
\end{equation*}
 it is now immediate to
 verify that
 $
 j^*\Omega_0(\widetilde\eta) (\cdot, z) = 
  j^*\Omega_0(\eta) (\cdot, z),
  $
 so that $\widetilde\eta$ satisfies \eqref{E:extension}, as desired.
   \end{proof} 
We summarize these results in the following two propositions:
\begin{Prop}\label{P:repr-CL-bndry} Suppose that $D$ is a bounded, strictly $\mathbb C$-linearly convex domain of class $C^2$. Then, with same notations as above we have
\begin{equation*}
f(z) = \mathbf\Sz_{L}f(z),\quad z\in D, \quad f\in \vartheta(D)\cap C(\overline D).
\end{equation*}
\end{Prop}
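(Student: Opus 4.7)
The plan is to invoke Proposition \ref{P:repr-hol-bndry} with the specific generating form $\eta(w,z) = \dee\rho(w)/\langle\dee\rho(w), w-z\rangle$ appearing in \eqref{E:gen-ball}. Essentially all the hypothesis-checking has already been done in the discussion leading up to the statement, so the proof amounts to organizing these verifications.

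First I would note that $D$ being of class $C^2$ ensures that $\dee\rho$ has coefficients of class $C^1$ in a neighborhood $U$ of $\bndry D$ where $\nabla\rho$ is non-vanishing. Second, for a fixed $z\in D$, Lemma \ref{L:strictly-C-lin-cvx} (applied in the strictly $\mathbb C$-linearly convex case) produces an open set $U_z\subset \mathbb C^n\setminus\{z\}$ with $\bndry D\subset U_z$ on which $\langle\dee\rho(w), w-z\rangle\neq 0$; by shrinking if necessary we may take $U_z\subset U$. On $U_z$, the denominator $\langle\dee\rho(w), w-z\rangle$ is a non-vanishing $C^1$ function of $w$, so the coefficients of $\eta(\cdot, z)$ lie in $C^1(U_z)$, verifying one of the two hypotheses of Proposition \ref{P:repr-hol-bndry}.

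Next I would verify that $\eta$ is indeed generating at $z$ relative to $D$ in the sense of Definition \ref{D:gen-form}: condition \eqref{D:generating-2} is exactly the containment $\bndry D\subset U_z$ noted above, and condition \eqref{D:generating-1} follows directly from the formula \eqref{E:gen-ball}, since
\[
\langle\eta(w,z), w-z\rangle = \frac{\langle\dee\rho(w), w-z\rangle}{\langle\dee\rho(w), w-z\rangle} = 1\quad \text{for every } w\in U_z.
\]

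With both hypotheses of Proposition \ref{P:repr-hol-bndry} now in place, applying that proposition yields, for every $f\in\vartheta(D)\cap C(\overline{D})$,
\[
f(z) = \int_{w\in\bndry D} f(w)\, j^*\Omega_0(\eta)(w,z) = \int_{w\in\bndry D} f(w)\, \Sz_L(w,z) = \mathbf\Sz_L f(z),
\]
where the middle equality is simply the definition of $\Sz_L$ in \eqref{E:szego} and the last is the definition \eqref{D:CL-smooth}. There is no substantial obstacle here; the only subtlety, already handled by Lemma \ref{L:strictly-C-lin-cvx}, is that strict $\mathbb C$-linear convexity gives non-vanishing of $\langle\dee\rho(w), w-z\rangle$ not only for $w\in\bndry D$ but on a full open neighborhood $U_z$ of $\bndry D$ in $\mathbb C^n\setminus\{z\}$, which is what is needed to fit the framework of Definition \ref{D:gen-form} and hence of Proposition \ref{P:repr-hol-bndry}.
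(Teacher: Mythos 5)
Your proposal is correct and follows exactly the route the paper takes: the discussion preceding the statement verifies, via Lemma \ref{L:strictly-C-lin-cvx} and the $C^2$ regularity of $\rho$, that the form \eqref{E:gen-ball} is generating at each $z\in D$ with $C^1$ coefficients on $U_z$, and the proposition is then a direct application of Proposition \ref{P:repr-hol-bndry} together with the definitions \eqref{E:szego} and \eqref{D:CL-smooth}. No gaps.
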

\begin{Prop}\label{P:repr-CL-interior} Suppose that $D$ is a bounded, strictly convex domain of class $C^2$. Then, with same notations as above we have that
\begin{equation*}
f(z) = \mathbf\B_{L}f(z),\quad z\in D, \quad f\in \vartheta(D)\cap L^1(D).
\end{equation*}
\end{Prop}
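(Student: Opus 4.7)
The plan is to apply Proposition \ref{P:repr-hol-interior} directly, using the preceding lemma which already certifies that $\widetilde\eta$ meets the needed hypotheses. First I would note that since a strictly convex domain of class $C^2$ is \emph{a fortiori} strictly $\mathbb C$-linearly convex, the Cauchy-Leray generating form $\eta(w,z) = \dee\rho(w)/\langle\dee\rho(w),w-z\rangle$ from \eqref{E:gen-ball} is a bona-fide generating form for $D$: by Lemma \ref{L:strictly-C-lin-cvx} there is an open neighborhood $U_z\subset\mathbb C^n\setminus\{z\}$ of $\bndry D$ on which $\langle\dee\rho(w),w-z\rangle\neq 0$, and since $\rho\in C^2$, the coefficients of $\eta(\cdot,z)$ lie in $C^1(U_z)$. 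Hence $\eta$ fulfills the hypotheses of Proposition \ref{P:repr-hol-bndry}.

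Next I would invoke the lemma immediately preceding the proposition: under the strict convexity assumption, $\widetilde\eta(\cdot,z)$ defined by \eqref{D:h} has coefficients in $C^1(\overline D)$ (because $\mathrm{Re}\langle\dee\rho(w),w-z\rangle-\rho(w)>0$ on $\overline D$ for $z\in D$) and satisfies the boundary matching identity
\[
 j^*\Omega_0(\widetilde\eta)(\cdot,z) \;=\; j^*\Omega_0(\eta)(\cdot,z),
\]
which is precisely condition \eqref{E:extension} of Proposition \ref{P:repr-hol-interior}. With these two ingredients, Proposition \ref{P:repr-hol-interior} applies to $\widetilde\eta$ and yields
\[
 f(z) \;=\; \frac{1}{(2\pi i)^n}\int_{w\in D} f(w)\,(\deebar_w\widetilde\eta)^n(w,z)
\]
for every $f\in\vartheta(D)\cap L^1(D)$ and $z\in D$. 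Recognizing the right-hand side as $\mathbf{B}_L f(z)$ via the definition \eqref{D:bergman-ker} of $\B_L$, the proposition follows.

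There is essentially no obstacle at the level of the proposition itself; all of the real work has been absorbed into the lemma (which handles the $C^1(\overline D)$ regularity via the strict convexity estimate $\mathrm{Re}\langle\dee\rho,w-z\rangle>0$ on $\overline D\setminus\{z\}$) and into Proposition \ref{P:repr-hol-interior} (which in turn handles the passage from $f\in\vartheta(U(\overline D))$ to $f\in\vartheta(D)\cap L^1(D)$ by the exhaustion $D_k=\{\rho<-1/k\}$ together with dominated convergence). The only point worth double-checking in the write-up is that the integrability of $(\deebar_w\widetilde\eta)^n(\cdot,z)$ on $D$ (needed to apply dominated convergence when passing from the $D_k$ identity to the $D$ identity) follows from the lower bound $|\langle\dee\rho(w),w-z\rangle-\rho(w)|\gtrsim 1$ uniformly on $\overline D$ for each fixed $z\in D$, which is exactly the content of \eqref{E:Lem} in the preceding lemma.
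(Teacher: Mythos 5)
Your proposal is correct and follows exactly the route the paper takes: the statement is explicitly presented as a summary of the preceding lemma (which verifies that $\widetilde\eta$ from \eqref{D:h} has $C^1(\overline D)$ coefficients and satisfies \eqref{E:extension}) combined with Proposition \ref{P:repr-hol-interior}, after identifying the resulting integral with $\mathbf B_L f(z)$ via \eqref{D:bergman-ker}. Your closing remark about integrability of $(\deebar_w\widetilde\eta)^n(\cdot,z)$ is a sensible sanity check but is already covered by the $C^1(\overline D)$ regularity established in the lemma.
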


\bigskip

\section{$L^p$ estimates}\label{S:main}

In this section we discuss $L^p$-regularity of the Cauchy-Leray integral and of the Cauchy-Szeg\"o and Bergman projections for the domains under consideration. Detailed proofs of the results concerning the Bergman projection, theorem \ref{T:Bergman-bdd} and corollary \ref{C:abs-Bergman} below,  can be found in \cite{LS-2}. 
The statements concerning the Cauchy-Leray integral and the Cauchy-Szeg\"o projection
 (theorems \ref{T:CL-bdd-smooth} and \ref{T:Szego-bdd} below,
 and
 theorem \ref{T:CL-bdd-C11} in the next section)  are the subject of a series of forthcoming papers; here we will limit ourselves to presenting an outline of the main 
points of interest in their proofs.

We begin by recalling the defining properties of the Bergman and Cauchy-Szeg\"o projections and of
their corresponding function spaces.\\

\subsection{The Bergman Projection}
Let
$
D\subset\mathbb C^n
$
be a bounded connected open set.

\begin{Def}
For any $1\leq q<\infty$ the {\em Bergman space $\vartheta L^q(D)$} is
$$
\vartheta L^q(D) =\vartheta (D)\cap L^q(D, dV).
$$
\end{Def}
The following inequality
$$
\sup\limits_{z\in\mathcal K}|F(z)| \leq C(\mathcal K)\|F\|_{L^p(D, dV)}
$$
 which is valid for any compact subset $\mathcal K\subset D$
  and for any holomorphic function 
 $F\in\vartheta(D)$, shows that the Bergman space 
 is a closed subspace of $L^q(D, dV)$. This inequality also shows that
  the point evaluation:
$$
ev_z(f):= f(z),\quad z\in D
$$
is a bounded linear functional on the Bergman space 
(take $\mathcal K := \{z\}$). 
In the special case $q=2$, classical arguments
from the theory of Hilbert spaces grant the existence of an orthogonal projection,
called the {\em Bergman projection for $D$}
$$
\Bop: L^2(D)\to \vartheta L^2(D)
$$
that enjoys the following properties
\begin{equation*}\label{E:bergman-ker-5}
\Bop f(z) = f(z),\quad f\in \vartheta L^2(D),\ \ z\in D
\end{equation*}
\begin{equation*}\label{E:bergman-1}
\Bop^*=\Bop
\end{equation*}
\begin{equation*}\label{E:bergman-2}
 \|\Bop f\|_{L^2(D, dV)}\leq \|f\|_{L^2(D,\, dV)},\quad f\in L^2(D,\, dV)
 \end{equation*}
\begin{equation*}\label{E:bergman-ker-1}
 \Bop f(z) = \int\limits_{w\in D}\!\!\! f(w)\,\mathcal B(w, z)\,dV(w),\quad z\in D,\ \ f\in L^2(D, dV)
 \end{equation*}
 where $dV$ denotes Lebesgue measure for $\mathbb C^n$.
%
The function $\mathcal B(w, z)$ is holomorphic with respect to $z\in D$; it
is called the 
{\em Bergman kernel function}. The Bergman kernel function depends on the domain and is known explicitly only for very special domains, such as the unit ball, see e.g. \cite{Ru}:
\begin{equation}\label{E:bergman-ker-disc}
\mathcal B(w, z) = \frac{n!}{\pi^n(1-[z, w])^{n+1}}\,,\quad (w, z)\in\mathbb B_1(0) \times\mathbb B_1(0)
\end{equation}
here 
$\(z, w\) := \sum\limits_{j=1}^nz_j\cdot\overline{w}_j$ is the hermitian product  for $\mathbb C^n$.\\

\subsection{The Cauchy-Szeg\"o projection.} 
Let
$
D\subset\mathbb C^n
$
be a bounded connected open set with 
sufficiently smooth
 boundary. For such a domain, various notions of Hardy spaces of holomorphic functions can be obtained by considering (suitably interpreted) boundary
 values of functions  that are holomorphic in $D$ and whose restriction to the boundary of $D$ has some integrability, see \cite{S}. 
 While a number of such definitions can be given, here we 
 adopt the following

\begin{Def}
For any $1\leq q<\infty$ the {\em Hardy Space $H^q(\bndry D, d\sigma)$} is the closure in $L^q(\bndry D, d\sigma)$ of the restriction to the boundary of the functions holomorphic in a neighborhood of $\overline{D}$.
In the special case when $q=2$ the orthogonal projection
$$
\mathbf S: L^2(\bndry D, d\sigma)\to H^2(\bndry D, d\sigma)
$$
is called the {\em The Cauchy-Szeg\"o Projection
  for $D$}.
\end{Def}
The Cauchy-Szeg\"o projection has the following basic properties:

\begin{equation*}\label{E:szego-1}
\mathbf S^*=\mathbf S
\end{equation*}
\begin{equation*}\label{E:szego-2}
 \|\mathbf S f\|_{L^2(\bndry D, \, d\sigma)}\leq \|f\|_{L^2(\bndry D, \, d\sigma)}\, ,\quad f\in L^2(\bndry D, d\sigma)
 \end{equation*}
\begin{equation*}\label{E:szego-ker-1}
 \mathbf S f(z) = \int\limits_{w\in \bndry D}\!\!\!\! \mathcal S(w, z)\,f(w)d\sigma(w),\quad z\in \bndry D.
 \end{equation*}
The function $\mathcal S(w, z)$, initially defined for $z\in\bndry D$, extends holomorphically to
$z\in D$; it is called the 
{\em Cauchy-Szeg\"o kernel function}. Like the Bergman kernel function, the Cauchy-Szeg\"o kernel function depends on the domain $D$; for the unit ball we have \cite{Ru}
\begin{equation}\label{E:szego-ker-disc}
\mathcal S(w, z) = \,\frac{(n-1)!}{2\pi^n(1-\(z, w\))^n},\quad 
(w, z)\in \bndry\mathbb B_1(0)\times \bndry 
\mathbb B_1(0)\, .\\
\end{equation}

\subsection{$L^p$-estimates}
We may now state our main results.
\begin{Thm}\label{T:CL-bdd-smooth}
 Suppose $D$ is a bounded domain of class $C^2$ which is strongly $\mathbb{C}$-linearly convex. Then the Cauchy-Leray integral \eqref{D:CL-smooth},  initially defined for $f \in C^1 (\bndry D )$, extends to a bounded operator on $L^p (\bndry D, d\sigma)$, $1 < p < \infty$. 
\end{Thm}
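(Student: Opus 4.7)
The plan is to show that $\mathbf{\Sz}_L$ is bounded on $L^2(\bndry D, d\sigma)$ via a $Tb$-type theorem on a space of homogeneous type, and then to deduce $L^p$-boundedness for $1 < p < \infty$ by standard Calder\'on-Zygmund machinery.

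First I would equip $\bndry D$ with the anisotropic quasi-distance
$$
d(w, z) := |\langle \dee\rho(w), w-z\rangle|^{1/2} + |\langle \dee\rho(z), z-w\rangle|^{1/2},
$$
and, using the $C^2$ regularity of $\rho$ together with the strong $\mathbb C$-linear convexity estimate of Definition \ref{D:strongly-C-lin-cvx}, verify that $d$ is symmetric up to constants and satisfies the quasi-triangle inequality, while $\sigma(B_d(w, r)) \approx r^{2n}$ for small $r$. This places the problem in the setting of the space of homogeneous type $(\bndry D, d, d\sigma)$. With respect to this quasi-metric the kernel \eqref{E:szego} satisfies, using Lemma \ref{L:strictly-C-lin-cvx} and the boundedness of its numerator, the Calder\'on-Zygmund size bound
$$
|\Sz_L(w, z)| \lesssim \frac{1}{d(w, z)^{2n}}.
$$
Since $z$ enters the kernel only through the denominator and does so polynomially, differentiating in $z$ yields the H\"older-type regularity estimate
$$
|\Sz_L(w, z_1) - \Sz_L(w, z_2)| \lesssim \frac{1}{d(w, z_1)^{2n}} \left(\frac{d(z_1, z_2)}{d(w, z_1)}\right)^{\!\alpha}
$$
for some $\alpha > 0$ and all $w$ with $d(w, z_1) \geq 2 c\, d(z_1, z_2)$. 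A corresponding estimate in the $w$-variable is more delicate because the form $\dee\rho \wedge (\deebar\dee\rho)^{n-1}$ has coefficients involving second derivatives of $\rho$, and is therefore only $C^0$; nevertheless, the gain $|\langle \dee\rho(w), w-z\rangle| \gtrsim |w-z|^2$ combined with the modulus of continuity of $\deebar\dee\rho$ is enough to produce a weaker regularity bound in $w$ sufficient for the framework below.

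The crux, and the main obstacle, is the $L^2$-boundedness of $\mathbf{\Sz}_L$ with only $C^2$ boundary. My plan is to apply a $Tb$-theorem on $(\bndry D, d, d\sigma)$ in the spirit of David-Journ\'e-Semmes (or a variant adapted to spaces of homogeneous type), choosing the para-accretive function $b$ from the complex geometry itself: a natural candidate is to take $b\, d\sigma$ to be a fixed scalar multiple of $j^*\!\bigl(\dee\rho \wedge (\deebar\dee\rho)^{n-1}\bigr)$, normalized so that $|b| \approx 1$ on $\bndry D$. For this $b$, the reproducing identity of Proposition \ref{P:repr-CL-bndry} together with a Stokes-theorem computation applied to holomorphic test data constructed from $b$ reduces the verification of $\mathbf{\Sz}_L(b), \mathbf{\Sz}_L^*(b) \in \mathrm{BMO}(d\sigma)$ to boundary oscillation estimates on $d$-balls, which can be controlled by the kernel bounds of the previous step.

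Once $L^2$-boundedness is in hand, the kernel estimates above deliver, via the Calder\'on-Zygmund decomposition in $(\bndry D, d, d\sigma)$, the weak-type $(1,1)$ bound, and Marcinkiewicz interpolation and duality then give $L^p$-boundedness for $1 < p < \infty$. The essential subtlety to emphasize is that, in contrast to the $C^\infty$ setting where one may compare $\mathbf{\Sz}_L$ to an explicit operator on an osculating Siegel-type model domain (for which Fourier analysis on nilpotent groups yields $L^2$-boundedness directly), the $C^2$ regularity forces one to work intrinsically on $\bndry D$ and to exploit the full flexibility of the $Tb$-framework to compensate for the limited smoothness of $\dee\rho$.
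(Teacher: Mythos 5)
Your overall strategy coincides with the one the paper outlines: the quasi-metric $|\langle\dee\rho(w),w-z\rangle|^{1/2}$, the resulting space-of-homogeneous-type structure with balls of measure $\approx r^{2n}$, and the David--Journ\'e--Semmes $T(1)/T(b)$ technology. Moreover your choice of para-accretive function, $b\,d\sigma = c\,j^*\bigl(\dee\rho\wedge(\deebar\dee\rho)^{n-1}\bigr)$, is exactly the paper's Levi--Leray measure $d\mu_\rho$ of \eqref{D:levi-leray}. But you have not drawn the conclusion that makes this choice decisive: once the second-derivative form is absorbed into the measure, the kernel of $\mathbf \Sz_L$ relative to $d\mu_\rho$ is simply $\langle\dee\rho(w),w-z\rangle^{-n}$ (see \eqref{Eq:8}), whose coefficients involve only \emph{first} derivatives of $\rho$ and are therefore $C^1$ in \emph{both} variables, as are those of the adjoint kernel computed in $L^2(d\mu_\rho)$. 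Your proposed fallback --- a ``weaker regularity bound in $w$'' extracted from ``the modulus of continuity of $\deebar\dee\rho$'' --- does not work as stated: for a $C^2$ defining function the second derivatives are merely continuous, with no quantitative modulus, so no H\"older- or Dini-type smoothness in $w$ can be wrung out of the numerator. The correct move is never to differentiate the numerator at all: it lives in the measure, and the standard kernel estimates in both variables are read off from the denominator alone.

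The more serious gap is the cancellation step. Testing $\mathbf \Sz_L$ on the function $1$ (which is holomorphic) via Proposition \ref{P:repr-CL-bndry} gives $\mathbf \Sz_L(1)=1$, which is easy; the hard conditions are the weak boundedness property and the BMO condition for the adjoint, to which the reproducing formula does not apply because the kernel is not holomorphic in the integration variable. Your phrase ``a Stokes-theorem computation applied to holomorphic test data constructed from $b$'' gestures at this but is not an argument. The paper's mechanism is specific and essential: the integration-by-parts identity \eqref{Eq:7} shows that for $n>1$ the kernel is a derivative of the less singular kernel $\langle\dee\rho(w),w-z\rangle^{-(n-1)}$, so that
$\mathbf \Sz_L(f)(z)=c\int_{\bndry D}\frac{df(w)\wedge j^*(\deebar\dee\rho)^{n-1}}{\langle\dee\rho(w),w-z\rangle^{n-1}}+\mathbf E(f)(z)$
with $\mathbf E$ negligible; applied to normalized bump functions this gains a full power of the quasi-distance and is what makes the cancellation hypotheses verifiable. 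Without this (or some substitute) the $T(b)$ hypotheses are unverified and the $L^2$ bound does not follow. The remaining steps --- Calder\'on--Zygmund decomposition in the quasi-metric, weak $(1,1)$, interpolation and duality --- are standard, as you say.
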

It is only the weaker notion of {\em strict}\, $\mathbb{C}$-linear convexity that is needed to define the Cauchy-Leray integral, but to prove the $L^p$ results one needs to assume
{\em strong} $\mathbb{C}$-linear convexity.
\begin{Thm}\label{T:Szego-bdd}
   Under the assumption that the bounded domain $D$ has a $C^2$ boundary and is strongly pseudo-convex, one can assert that $\mathbf S$ extends to a bounded mapping on $L^p (\bndry D , d \sigma )$, when $1 < p < \infty$. 
\end{Thm}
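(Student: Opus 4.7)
The plan is to prove $L^p$ boundedness of $\mathbf{S}$ by relating it to the corrected Cauchy--Fantappi\'e operator $\mathbf{\Sz}_\epsilon$ constructed in \eqref{D:HR-integral}, via the Kerzman--Stein identity. Since $\mathbf{\Sz}_\epsilon$ has a globally holomorphic kernel in $z$ and reproduces holomorphic functions (Proposition \ref{P:repr-hol-eps-bndry-corr}), a standard computation using $\mathbf{S}^* = \mathbf{S}$ and the reproducing property gives, on the dense subspace of boundary values of holomorphic functions smooth up to $\bndry D$,
\begin{equation*}
\mathbf{S}(I - \mathbb{A}_\epsilon) = \mathbf{\Sz}_\epsilon, \quad \text{where} \quad \mathbb{A}_\epsilon = \mathbf{\Sz}_\epsilon^* - \mathbf{\Sz}_\epsilon,
\end{equation*}
the analogue of \eqref{Eq:1}. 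Provided one can (i) establish $L^p$-boundedness of $\mathbf{\Sz}_\epsilon$, and (ii) show that $\mathbb{A}_\epsilon$ is a well-behaved remainder (e.g.\ compact on $L^p$, or of operator norm less than $1$ for $\epsilon$ small), then $I - \mathbb{A}_\epsilon$ is invertible on $L^p(\bndry D, d\sigma)$ for $1 < p < \infty$, and one recovers $\mathbf{S} = \mathbf{\Sz}_\epsilon (I - \mathbb{A}_\epsilon)^{-1}$ as a bounded operator.

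The hard part will be step (i): the $L^p$-boundedness of $\mathbf{\Sz}_\epsilon$. The key is to interpret $\bndry D$, equipped with Euclidean surface measure $d\sigma$, as a space of homogeneous type in the sense of Coifman--Weiss, with respect to the \emph{non-isotropic} (Kor\'anyi-type) quasi-metric
\begin{equation*}
d(w,\zeta) = |\langle \dee\rho(w), w-\zeta\rangle|^{1/2} + |\text{complex-tangential part of } w-\zeta|,
\end{equation*}
whose existence and geometric properties arise precisely from strong pseudo-convexity (through the lower bounds for $\mathrm{Re}\,g^\epsilon$ in the proof of Lemma \ref{L:gen-form-pscvx-eps}). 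On this quasi-metric space the natural balls $B(w,r)$ have volume $\sigma(B(w,r)) \approx r^{2n}$, and a careful analysis of the kernel $\Sz_\epsilon(w,z)$ shows it satisfies a size bound of Calder\'on--Zygmund type, $|\Sz_\epsilon(w,z)| \lesssim \sigma(B(w,d(w,z)))^{-1}$, together with H\"older-type regularity in both variables. One then invokes a $T(1)$ (or $T(b)$) theorem in this homogeneous setting to upgrade $L^2$-boundedness to $L^p$. The $L^2$-boundedness itself follows either from the $T(1)$ cancellation condition ($\mathbf{\Sz}_\epsilon 1$ and $\mathbf{\Sz}_\epsilon^* 1$ lie in BMO) or from the observation that $\mathbf{\Sz}_\epsilon$ already reproduces holomorphic functions and its kernel is close, in an appropriate sense, to the Cauchy--Szeg\"o kernel on Kor\'anyi balls.

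For step (ii), the crucial observation is that $\mathbb{A}_\epsilon$ has kernel $\Sz_\epsilon^*(w,z) - \Sz_\epsilon(w,z) = \overline{\Sz_\epsilon(z,w)} - \Sz_\epsilon(w,z)$. In this difference the leading order singularities (coming from $g^\epsilon(w,z)^{-n}$ vs.\ $\overline{g^\epsilon(z,w)}^{-n}$) nearly cancel, because on the boundary the Levi polynomial $\Delta(w,z)$ is ``almost symmetric'' up to terms of order $|w-z|^3$. Quantifying this cancellation, one obtains a kernel bound for $\mathbb{A}_\epsilon$ strictly less singular than Calder\'on--Zygmund at the diagonal---improving by one power in the non-isotropic distance---which translates to compactness of $\mathbb{A}_\epsilon$ on $L^p(\bndry D,d\sigma)$. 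Alternatively, by choosing $\epsilon$ small one obtains $\|\mathbb{A}_\epsilon\|_{L^p \to L^p} < 1$, allowing a Neumann series argument.

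Combining steps (i) and (ii), one concludes that $\mathbf{S}$, defined \emph{a priori} only on $L^2$, extends to a bounded operator on $L^p(\bndry D, d\sigma)$ for $1 < p < \infty$, with the crucial input being strong pseudo-convexity (used twice: to construct $\mathbf{\Sz}_\epsilon$ with globally holomorphic kernel in Section \ref{S:global-hol}, and to equip $\bndry D$ with the Kor\'anyi quasi-metric in which the singular integral theory can be carried out).
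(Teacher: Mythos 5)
Your overall architecture (the Kerzman--Stein identity $\mathbf S(I-\mathbb A_\epsilon)=\mathbf \Sz_\epsilon$, $T(1)$-theorem bounds for $\mathbf \Sz_\epsilon$ in the non-isotropic quasi-metric, then inversion) is the same as the paper's, but your treatment of step (ii) contains a genuine gap under the stated hypothesis that $\bndry D$ is merely $C^2$. Your primary route rests on the claim that the Levi polynomial is ``almost symmetric up to terms of order $|w-z|^3$,'' so that the kernel of $\mathbb A_\epsilon=\mathbf\Sz_\epsilon^*-\mathbf\Sz_\epsilon$ gains a full power of the non-isotropic distance and $\mathbb A_\epsilon$ is compact. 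That cubic gain requires control of the modulus of continuity of the \emph{second} derivatives of $\rho$ (i.e.\ $C^{2,\alpha}$ or better); for a $C^2$ defining function the antisymmetric part of $\Delta(w,z)$ is only $o(|w-z|^2)$, with no rate, so the difference kernel is still a genuine Calder\'on--Zygmund singularity and compactness fails. This is exactly why the paper works with the whole family $\{g^\epsilon\}$ built from the smoothed coefficients $\tau^\epsilon_{j,k}$: the symmetry estimate \eqref{Eq:10} gives $|g^\epsilon(w,z)-\overline{g^{\,\epsilon}}(z,w)|\le \epsilon\, c\,|w-z|^2$ near the diagonal --- a \emph{small constant} times the critical power, not a better power.

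Your fallback (``choose $\epsilon$ small so that $\|\mathbb A_\epsilon\|_{L^p\to L^p}<1$'') also does not work as stated, because smallness cannot be achieved for all of $\mathbb A_\epsilon$ at once: the estimate \eqref{Eq:10} only holds for $|w-z|<\delta_\epsilon$, and the off-diagonal remainder, while given by a bounded kernel, has a bound that may blow up as $\epsilon\to 0$. The paper therefore splits $\mathbf\Sz_\epsilon-\mathbf\Sz_\epsilon^*=\mathbf A_\epsilon+\mathbf R_\epsilon$ (Proposition \ref{P:split}), inverts only $\mathbf I-\mathbf A_\epsilon$ after choosing $\epsilon=\epsilon(p)$ with $\epsilon\, c_p\le 1/2$, and handles the leftover term $\mathbf S\mathbf R_\epsilon$ separately, using that $\mathbf R_\epsilon$ maps $L^1\to L^\infty$ (hence $L^p\to L^2$ on the bounded set $\bndry D$) and that $\mathbf S$ is bounded on $L^2$ by definition, with a duality argument for $p>2$. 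Your write-up is missing this decomposition, the $p$-dependent choice of $\epsilon$ (which is why a single operator, as opposed to the family $\{\mathbf\Sz_\epsilon\}_\epsilon$, will not do), and the separate treatment of the bounded-kernel remainder; these are the essential new ingredients that make the argument go through at the $C^2$ level of regularity.
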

\begin{Thm}\label{T:Bergman-bdd}
   Under the same assumptions on $D$ it follows that the operator $\mathbf B$ extends to a bounded operator on $L^p (D , dV)$ for $1 < p < \infty$.
\end{Thm}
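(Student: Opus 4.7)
The plan is to exploit the reproducing identity of Proposition \ref{P:repr-hol-eps-interior-corr} to compare $\Bop$ with the Cauchy-Fantappi\'e-type operator $\mathbf B_\epsilon$ via a Kerzman-Stein-style identity, and to then reduce the $L^p$ theory of $\Bop$ to that of $\mathbf B_\epsilon$. Specifically, since $\Bop f\in\vartheta L^2(D)$ for any $f\in L^2(D,dV)$, Proposition \ref{P:repr-hol-eps-interior-corr} gives $\mathbf B_\epsilon\Bop=\Bop$. On the other hand the kernel $\B_\epsilon(w,z)$ is holomorphic in $z\in D$ by construction (Section \ref{S:global-hol}), so $\mathbf B_\epsilon f\in\vartheta L^2(D)$ for $f\in L^2(D,dV)$ (once $\mathbf B_\epsilon$ is shown to be $L^2$-bounded), whence $\Bop\mathbf B_\epsilon=\mathbf B_\epsilon$. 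Taking adjoints and using $\Bop^*=\Bop$, this yields the identity
\begin{equation*}
\Bop\bigl(I-\mathbb A_\epsilon\bigr)\,=\,\mathbf B_\epsilon,\qquad \mathbb A_\epsilon\, :=\, \mathbf B_\epsilon^{\!*}-\mathbf B_\epsilon,
\end{equation*}
which is the interior analogue of \eqref{Eq:1}.

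The first main step is to prove that $\mathbf B_\epsilon$ is bounded on $L^p(D,dV)$ for every $1<p<\infty$ and every $0<\epsilon<\epsilon_0$. Writing $\B_\epsilon=\B_\epsilon^1+\B_\epsilon^2$ as in Section \ref{S:correction}, the correction term $\B_\epsilon^2(w,z)$ is smooth up to $\overline D\times\overline D$ by the regularity of the $\deebar$-Neumann solution operator $\s_z$, and contributes a bounded operator on every $L^p$. The essential work concerns $\B_\epsilon^1(w,z)=(2\pi i)^{-n}(\deebar_w\widetilde\eta^{\,\epsilon})^n(w,z)$. By Lemma \ref{L:denom-aux} and its $\epsilon$-analogue in the proof of Lemma \ref{L:gen-form-pscvx-eps}, one has the basic estimate
\begin{equation*}
|g^\epsilon(w,z)-\rho(w)|\,\gtrsim\,|\rho(w)|+|\rho(z)|+|\mathrm{Im}\,\Delta^{\!\epsilon}(w,z)|+|w-z|^2,
\end{equation*}
which realizes the non-isotropic pseudoballs adapted to the strongly pseudo-convex geometry. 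Using this, the leading bound $|\B_\epsilon^1(w,z)|\lesssim |g^\epsilon(w,z)-\rho(w)|^{-n-1}$ together with the improved bounds for the tangential and mixed components (as in \cite[\S III-IV]{Ra}) reduces the $L^p$-boundedness to a Schur test with weight $|\rho(w)|^{-a}$ for a suitable $a$, the verification of which is the standard strongly pseudo-convex integral computation in non-isotropic polar coordinates. This step is worked out in \cite{LS-2}.

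The second main step is to show that $\mathbb A_\epsilon=\mathbf B_\epsilon^{\!*}-\mathbf B_\epsilon$ is compact on $L^p(D,dV)$, and even has operator norm tending to $0$ as $\epsilon\to 0^{+}$, so that $I-\mathbb A_\epsilon$ is invertible on $L^p(D,dV)$ by a Neumann series once $\epsilon=\epsilon(p)$ is chosen small. The key point here is cancellation of the principal singularity: the kernel of $\mathbb A_\epsilon$ is
\begin{equation*}
\overline{\B_\epsilon(z,w)}-\B_\epsilon(w,z),
\end{equation*}
and the construction of $g^\epsilon$ is designed precisely so that the leading $|g^\epsilon|^{-n-1}$-singularities of the two terms coincide when $w$ and $z$ are exchanged, up to a factor controlled by $\epsilon$ and by the $C^2$-defect of $\rho$ in the Levi polynomial expansion; the remainder is weakly singular, hence defines a compact operator on every $L^p$. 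This is the technical heart of the argument. Once these two ingredients are in place, the identity $\Bop=\mathbf B_\epsilon(I-\mathbb A_\epsilon)^{-1}$ (valid a priori on the dense subspace of functions where both sides act, and extended by continuity) gives the desired $L^p$-boundedness of $\Bop$.

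The main obstacle is the first step: proving the $L^p$-boundedness of $\mathbf B_\epsilon^1$ with only $C^2$ boundary. The tangential cancellations that make the Schur estimate work are classical under higher regularity, but under the minimal $C^2$ hypothesis one cannot integrate by parts freely against $\rho$, so the extra derivatives appearing in $(\deebar_w\widetilde\eta^{\,\epsilon})^n$ must be handled by the smoothing regularization $\tau^\epsilon_{j,k}$ introduced in \eqref{E:smooth}, and the dependence on $\epsilon$ of the resulting constants must be tracked carefully so as not to destroy the smallness of $\mathbb A_\epsilon$ needed in the second step.
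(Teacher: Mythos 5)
Your overall architecture is the paper's: the Kerzman--Stein identity $\mathbf B(\mathbf I-(\mathbf B_\epsilon^{\,*}-\mathbf B_\epsilon))=\mathbf B_\epsilon$, the $L^p$-boundedness of the family $\{\mathbf B_\epsilon\}$ (which, as the paper notes, is proved in \cite{LS-2} by absolutely convergent kernel estimates rather than $T(1)$ machinery, consistent with your Schur-test description), and then an inversion step. The identity you derive is correct, and your first main step is essentially the one carried out in \cite{LS-2}.

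The genuine gap is in your second step. You assert that $\mathbb A_\epsilon=\mathbf B_\epsilon^{\,*}-\mathbf B_\epsilon$ has operator norm tending to $0$ as $\epsilon\to 0^+$ (and is compact), and you invert $\mathbf I-\mathbb A_\epsilon$ by a Neumann series. But the symmetry estimate \eqref{Eq:10} controls the cancellation between $g^\epsilon(w,z)$ and $\overline{g^{\,\epsilon}}(z,w)$ only for $|w-z|<\delta_\epsilon$; off the diagonal there is no cancellation at all, and the resulting piece has a kernel that is merely bounded, with a bound that (as the paper explicitly warns after Proposition \ref{P:split}) may grow to infinity as $\epsilon\to 0$. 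So the full difference $\mathbb A_\epsilon$ is \emph{not} small in operator norm, and your Neumann series does not close. (Compactness of the remainder would not rescue the argument either: a compact perturbation makes $\mathbf I-\mathbb A_\epsilon$ Fredholm of index zero, but you would still need to prove injectivity on $L^p$ for $p\neq 2$, which you do not do; and the near-diagonal piece, being $\epsilon$ times a genuinely singular kernel, is small but has no reason to be compact.) The paper's route is to split $\mathbf B_\epsilon^{\,*}-\mathbf B_\epsilon=-(\mathbf A_\epsilon+\mathbf R_\epsilon)$ with $\|\mathbf A_\epsilon\|_{L^p\to L^p}\leq \epsilon\, c_p$ (near-diagonal, where \eqref{Eq:10} applies) and $\mathbf R_\epsilon$ with bounded kernel, to rewrite the identity as $\mathbf B(\mathbf I-\mathbf A_\epsilon)=\mathbf B_\epsilon+\mathbf B\mathbf R_\epsilon$, to invert only $\mathbf I-\mathbf A_\epsilon$ after choosing $\epsilon=\epsilon(p)$ with $\epsilon c_p\leq 1/2$, and then to dispose of $\mathbf B\mathbf R_\epsilon$ separately: since $\mathbf R_\epsilon$ maps $L^p$ into $L^2$ (bounded kernel on a bounded domain) and $\mathbf B$ is bounded on $L^2$, the composition $\mathbf B\mathbf R_\epsilon$ is bounded on $L^p$ for $p\leq 2$, with the case $p\geq 2$ obtained by duality. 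This rearrangement is the missing idea in your proposal.
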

\indent The following additional results also hold.
\begin{Cor}\label{C:weighted-bdd}
   The result of theorem \ref{T:Szego-bdd} extends to the case when the projection $\mathbf S$ is replaced by the corresponding orthogonal projection $\mathbf S_\omega$, with respect to the Hilbert space $L^2 (\bndry D , \omega d \sigma )$ where $\omega$ is any continuous strictly positive function on $\bndry D$.
\end{Cor}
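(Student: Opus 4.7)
The plan is to adapt the Kerzman--Stein identity \eqref{Eq:1} to the weighted inner product. Since $\omega$ is continuous and strictly positive on the compact set $\bndry D$, there exist constants $0<c\le \omega\le C<\infty$, so the multiplication operators $M_\omega$ and $M_\omega^{-1}$ are bounded on every $L^p(\bndry D,d\sigma)$, and $L^p(\bndry D,\omega d\sigma)$ coincides with $L^p(\bndry D,d\sigma)$ as a Banach space (with equivalent norms). In particular the Hardy space $H^2$ is the same in both settings. Let $\mathbf{C}$ denote the Cauchy-type reproducing operator $\mathbf{\Sz}_\epsilon$ of \eqref{D:HR-integral}, which is $L^p$-bounded for $1<p<\infty$ by Theorem~\ref{T:Szego-bdd} (and $\mathbf{C}^*$ is likewise bounded on $L^p$ by duality). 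Writing $A^{*_\omega}:=M_\omega^{-1}A^*M_\omega$ for the adjoint in $L^2(\omega d\sigma)$ and setting $\mathbb{A}_\omega:=\mathbf{C}^{*_\omega}-\mathbf{C}$, the same short argument that produces \eqref{Eq:1} in the unweighted case goes through: the reproducing property $\mathbf{C}\mathbf{S}_\omega=\mathbf{S}_\omega$ (because $\mathbf{S}_\omega f\in H^2$ and $\mathbf{C}$ reproduces $H^2$ by Proposition~\ref{P:repr-hol-eps-bndry-corr}), together with $\mathbf{S}_\omega^{*_\omega}=\mathbf{S}_\omega$, gives $\mathbf{S}_\omega\mathbf{C}^{*_\omega}=\mathbf{S}_\omega$ and hence
\begin{equation*}
\mathbf{S}_\omega\bigl(I-\mathbb{A}_\omega\bigr)\;=\;\mathbf{C}.
\end{equation*}

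Next I would verify the analytic properties of $\mathbb{A}_\omega$. The decomposition
\begin{equation*}
\mathbb{A}_\omega \;=\; M_\omega^{-1}\,\mathbb{A}\,M_\omega \;+\; M_\omega^{-1}\,[\mathbf{C},M_\omega],\qquad \mathbb{A}:=\mathbf{C}^*-\mathbf{C},
\end{equation*}
exhibits $\mathbb{A}_\omega$ as a sum of two $L^p$-bounded operators for $1<p<\infty$, using the $L^p$-boundedness of $\mathbf{C},\mathbf{C}^*,M_\omega,M_\omega^{-1}$. Moreover a direct computation shows $\mathbb{A}_\omega^{*_\omega}=-\mathbb{A}_\omega$, so $\mathbb{A}_\omega$ is skew-adjoint on the Hilbert space $L^2(\omega d\sigma)$; the identity $\|(I-A)x\|^2=\|x\|^2+\|Ax\|^2$ valid for every skew-adjoint $A$ immediately gives that $I-\mathbb{A}_\omega$ is bounded below, hence invertible, on $L^2(\omega d\sigma)=L^2(d\sigma)$, with $\|(I-\mathbb{A}_\omega)^{-1}\|_{L^2\to L^2}\le 1$.

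The remaining step --- and the main obstacle --- is the extension of invertibility of $I-\mathbb{A}_\omega$ from $L^2$ to $L^p$. My strategy is to show that $\mathbb{A}_\omega$ is in fact \emph{compact} on $L^p(\bndry D,d\sigma)$ for every $1<p<\infty$ and then invoke Fredholm theory. For the first summand $M_\omega^{-1}\mathbb{A}M_\omega$, compactness reduces to compactness of $\mathbb{A}=\mathbf{C}^*-\mathbf{C}$, which is an essential ingredient of the Kerzman--Stein machinery underlying the proof of Theorem~\ref{T:Szego-bdd}: in the antisymmetric combination $\mathbf{C}^*-\mathbf{C}$ the leading singular-integral terms cancel and a net gain of regularity remains. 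For the commutator $[\mathbf{C},M_\omega]$, compactness with the merely continuous symbol $\omega\in C(\bndry D)\subset \mathrm{VMO}(\bndry D)$ requires an Uchiyama-type theorem applied to the Calderón--Zygmund operator $\mathbf{C}$ on the space of homogeneous type $(\bndry D,d\sigma)$. Granted compactness, $I-\mathbb{A}_\omega$ is Fredholm of index zero on $L^p$; injectivity for $p\ge 2$ descends from the $L^2$ invertibility of Step~2 via the embedding $L^p\hookrightarrow L^2$ on the finite-measure space $\bndry D$, while the case $1<p<2$ follows by duality applied to the adjoint (which has the same structural form with $\omega$ replaced by $\omega^{-1}$). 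With $I-\mathbb{A}_\omega$ invertible on $L^p$, the weighted identity yields $\mathbf{S}_\omega=\mathbf{C}(I-\mathbb{A}_\omega)^{-1}$, whence $\mathbf{S}_\omega$ is bounded on $L^p(\bndry D,d\sigma)=L^p(\bndry D,\omega d\sigma)$. The principal technical work is thus the $L^p$-compactness of the two pieces of $\mathbb{A}_\omega$: compactness of $\mathbb{A}$ itself is internal to the proof of Theorem~\ref{T:Szego-bdd} in the $C^2$ strongly pseudo-convex setting, and compactness of the commutator for continuous (not Lipschitz) weights rests on the Uchiyama-type theory for the singular integral $\mathbf{C}$ on the non-Euclidean boundary $\bndry D$.
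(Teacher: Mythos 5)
Your overall framework (a weighted Kerzman--Stein identity with the $\omega$-adjoint, skew-adjointness of $\mathbb A_\omega$ on $L^2(\omega\,d\sigma)$, and the decomposition $\mathbb A_\omega=M_\omega^{-1}\mathbb A M_\omega+M_\omega^{-1}[\mathbf C,M_\omega]$) is sound and is indeed how one adapts the argument for $\mathbf S$ to $\mathbf S_\omega$. The genuine gap is your first pillar: the claimed \emph{compactness} of $\mathbb A=\mathbf C_\epsilon^{\ast}-\mathbf C_\epsilon$ for a fixed $\epsilon$. That is emphatically \emph{not} what the proof of theorem \ref{T:Szego-bdd} provides in the $C^2$ setting. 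Proposition \ref{P:split} gives only the splitting $\mathbf C_\epsilon-\mathbf C_\epsilon^{\ast}=\mathbf A_\epsilon+\mathbf R_\epsilon$, where $\mathbf R_\epsilon$ has a bounded kernel (hence is compact) but $\mathbf A_\epsilon$ is a genuine singular integral operator of the same order as $\mathbf C_\epsilon$ itself: the symmetry estimate \eqref{Eq:10} makes the coefficient of the singularity of size $O(\epsilon)$ near the diagonal, which yields $\|\mathbf A_\epsilon\|_{L^p\to L^p}\leq \epsilon c_p$, but there is no gain of regularity and no cancellation of the ``leading singular-integral terms'' at $C^2$ smoothness --- that cancellation is a Taylor-expansion phenomenon requiring more derivatives of $\rho$. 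This is precisely why remark (1) of the outline insists that a single fixed $\epsilon$ ``will not do'' and that the whole family $\{\mathbf C_\epsilon\}_\epsilon$ with $\epsilon=\epsilon(p)$ is needed. With compactness of $\mathbb A$ unavailable, your Fredholm argument has no starting point as written.

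Two further comments on how the argument is actually closed, which also shows your remaining machinery is heavier than necessary. First, the commutator does not need an Uchiyama-type theorem: the trivial bound $\|[\mathbf C_\epsilon,M_g]\|_{L^p\to L^p}\leq 2\|\mathbf C_\epsilon\|_{L^p\to L^p}\|g\|_{L^\infty}$ together with uniform approximation of the continuous $\omega$ by Lipschitz functions reduces $[\mathbf C_\epsilon,M_\omega]$ to a small-norm piece plus a piece whose kernel carries the extra factor $\omega_k(w)-\omega_k(z)=O(|w-z|)$ and is therefore of the same ``negligible'' type as the error kernel $\mathcal E$ in Section \ref{S:main} --- i.e.\ it fits the same small-plus-smoothing pattern as $\mathbf A_\epsilon+\mathbf R_\epsilon$. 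Second, once everything is in that pattern, one should not try to invert $I-\mathbb A_\omega$ at all (which is what forces you into Fredholm theory and the separate injectivity discussion): instead one writes $\mathbf S_\omega(\mathbf I-\mathbf A_{\epsilon,\omega})=\mathbf C_\epsilon+\mathbf S_\omega\mathbf R_{\epsilon,\omega}$, inverts only $\mathbf I-\mathbf A_{\epsilon,\omega}$ by a Neumann series after choosing $\epsilon$ so small that $\epsilon c_p\leq 1/2$, and absorbs $\mathbf S_\omega\mathbf R_{\epsilon,\omega}$ using the a priori $L^2(\omega\,d\sigma)$-boundedness of $\mathbf S_\omega$ together with the fact that $\mathbf R_{\epsilon,\omega}$ maps $L^p$ into $L^2$ (for $p\leq 2$; the case $p\geq 2$ by duality). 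This sidesteps both compactness and injectivity entirely. Your proposal would need to be restructured along these lines to be correct at the stated level of boundary regularity.
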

\indent 
A similar variant of theorem
\ref{T:Bergman-bdd} holds
for $\mathbf B_\omega$, the orthogonal projection on the sub-space of $L^2 (D , \omega\, dV)$. Here $\omega$ is any strictly positive continuous function on $\overline{D}$. 
\begin{Cor}\label{C:abs-Bergman}
   One also has the $L^p$ boundedness of the operator $| B |$, whose kernel is $| \mathcal B(z,w) |dV(w)$, where $\mathcal B(z,w)$ is the Bergman kernel function.\\
\end{Cor}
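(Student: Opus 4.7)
The plan is to reduce Corollary \ref{C:abs-Bergman} to pointwise size estimates on the Bergman kernel function $\mathcal B(w,z)$ and then apply a non-isotropic Schur test to the resulting positive operator. The starting point is the explicit integral representation of Proposition \ref{P:repr-hol-eps-interior-corr}: since $\mathbf B_\epsilon$ reproduces holomorphic functions and $\mathbf B$ is the orthogonal projection onto $\vartheta L^2(D)$, we have $\mathbf B \mathbf B_\epsilon = \mathbf B_\epsilon$ on $L^2$, and taking adjoints yields the operator identity
\begin{equation*}
\mathbf B \ =\ \mathbf B_\epsilon^*\ +\ \mathbf B\,\mathbb A_\epsilon,\qquad \mathbb A_\epsilon := \mathbf B_\epsilon - \mathbf B_\epsilon^*.
\end{equation*}
Iterating this identity in Neumann-series fashion would express $\mathbf B$, up to a suitably smoothing remainder, in terms of the kernels $\B_{\epsilon}^{1}$, $\B_{\epsilon}^{2}$, and their adjoints, for all of which explicit formulas are available from Sections \ref{S:local-holom} and \ref{S:correction}.

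Next I would establish the pointwise majorant
\begin{equation*}
|\mathcal B(w,z)|\ \lesssim\ \bigl(|\mathrm{Im}\,\Delta^{\epsilon}(w,z)|\ +\ |w-z|^{2}\ +\ |\rho(w)|\ +\ |\rho(z)|\bigr)^{-(n+1)}.
\end{equation*}
The principal component $\B_{\epsilon}^{1} = (2\pi i)^{-n}(\deebar_{w}\widetilde\eta^{\,\epsilon})^{n}$ produces precisely this decay once one invokes the lower bounds on $g^{\epsilon}(w,z)-\rho(w)$ from Lemma \ref{L:denom-aux}, where strong pseudo-convexity is essential. The correction term $\B_{\epsilon}^{2} = \mathcal S_{z}(\F(w,\cdot))$ is controlled by the regularity theory of the $\deebar$-Neumann operator on the smooth strongly pseudo-convex domain $\Omega$ (see \cite{FK}, \cite{CS}), and turns out to be bounded, hence harmless for the majorant.

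With this majorant in hand, the final step is Schur's test applied to the positive integral operator $T f(z) = \int_{D}|\mathcal B(w,z)|\,f(w)\,dV(w)$. Choosing the test function $\phi(\zeta) = (-\rho(\zeta))^{-\alpha}$ with $\alpha$ in an appropriate range depending on $p$, the required inequalities
\begin{equation*}
\int_{D}|\mathcal B(w,z)|\,\phi(w)^{p'}\,dV(w)\ \lesssim\ \phi(z)^{p'}
\end{equation*}
and its dual follow from the volume estimates $|B_{d}(\zeta,r)|\approx r^{n+1}$ for the non-isotropic pseudo-balls adapted to the pseudo-distance $d(w,z) \approx |\mathrm{Im}\,\Delta^{\epsilon}(w,z)| + |w-z|^{2} + |\rho(w)| + |\rho(z)|$, integrated in coordinates adapted to the CR structure of $\bndry D$.

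The main obstacle will be the Neumann-series step: ensuring that $\mathbb A_\epsilon$ is sufficiently smoothing that the iteration both converges and, more importantly, preserves pointwise kernel estimates rather than merely operator-norm bounds. The gain comes from the fact that $\mathcal S_{z}$ produces a kernel $\B_{\epsilon}^{2}$ much more regular than the principal singular kernel $\B_{\epsilon}^{1}$; verifying that this regularity gain propagates through each application of $\mathbb A_{\epsilon}$ and allows the error terms to be absorbed into the same non-isotropic majorant is the technical heart of the argument, carried out in detail in \cite{LS-2}.
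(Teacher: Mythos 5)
Your strategy --- the Kerzman--Stein operator identity, a Neumann-series expansion, a pointwise non-isotropic majorant for $\B^1_\epsilon$ and $\B^2_\epsilon$ obtained from the lower bounds on $g^\epsilon(w,z)-\rho(w)$, and a Schur test with weight $(-\rho)^{-\alpha}$ --- is essentially the route the paper takes: the text itself only sketches this and defers the details to \cite{LS-2}, noting precisely that the Bergman case can be run on absolute values of kernels, so that the $T(1)$ machinery used for the Cauchy--Leray integral and the Cauchy--Szeg\"o projection is not needed. One concrete correction, though: the operator identity you write down is false as stated. From $\mathbf B\,\mathbf B_\epsilon=\mathbf B_\epsilon$ (which follows from the holomorphy in $z$ of the kernel $\B_\epsilon$, i.e.\ condition (c), not from the reproducing property) together with $\mathbf B_\epsilon\,\mathbf B=\mathbf B$ (which \emph{is} the reproducing property of proposition \ref{P:repr-hol-eps-interior-corr} applied to $\mathbf B f\in\vartheta L^2(D)$) and $\mathbf B^*=\mathbf B$, one obtains
\begin{equation*}
\mathbf B \;=\; \mathbf B_\epsilon^* + \mathbb A_\epsilon\,\mathbf B
\;=\; \mathbf B_\epsilon - \mathbf B\,\mathbb A_\epsilon,
\qquad \mathbb A_\epsilon:=\mathbf B_\epsilon-\mathbf B_\epsilon^*,
\end{equation*}
whereas your version $\mathbf B=\mathbf B_\epsilon^*+\mathbf B\,\mathbb A_\epsilon$ would force $\mathbf B_\epsilon+\mathbf B_\epsilon^*=2\mathbf B$, which is not true. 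With the identity corrected, the remainder of your outline matches the argument of \cite{LS-2}, and you have correctly isolated the genuine technical burden for this particular corollary: since the statement concerns the kernel $|\mathcal B(w,z)|$ and not just the operator norm of $\mathbf B$, one must show that the majorant class is stable (with a gain of a factor $\approx\epsilon$ at each step) under composition with the kernel of $\mathbb A_\epsilon$, so that the Neumann series converges to a pointwise bound on $|\mathcal B(w,z)|$ by an integrable majorant to which the Schur test applies.
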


\subsection{Outline of the proofs} We begin by making the following remarks to clarify the background of these results.
\smallskip

\begin{enumerate}

\item The proofs of theorems \ref{T:Szego-bdd} and \ref{T:Bergman-bdd} make use of the whole 
{\em family} of operators $\displaystyle{\{\mathbf \Sz_\epsilon\}_\epsilon}$, $0<\epsilon<\epsilon_0$: in order to obtain $L^p$ estimates for $p$ in the full range $(1, \infty)$
one needs the flexbility to choose $\epsilon =\epsilon (p)$ sufficiently small. 
(A single choice, as in \cite{Ra}, of
$\mathbf \Sz_\epsilon$ for a fixed $\epsilon$, will not do.)
\medskip

   \item There is no simple and direct relation between $\mathbf S$ and $\mathbf S_\omega$, nor between $\mathbf B$ and $\mathbf B_\omega$. Thus the results for general $\omega$ are not immediate consequences of the results for $\omega \equiv 1$.
   \medskip
   
\item When $\bndry D$ and $\omega$ are smooth (i.e. $C^k$ for sufficiently high $k$), the above results have been known for a long time (see e.g., the remarks that were made in Section \ref{S:global-hol} concerning the case when $D$ is the unit ball).
 Moreover when $\bndry D$ and $\omega$ are smooth (and $\bndry D$ is strongly pseudo-convex), there are analogous asymptotic formulas for the kernels in question due to \cite{F}, which allow a proof of theorems \ref{T:Szego-bdd} and \ref{T:Bergman-bdd} in these cases. See also \cite{PS}.
 \medskip
 
\item Another approach to theorem \ref{T:Bergman-bdd} in the case of smooth strongly pseudo-convex domains is via the $\overline{\partial}$-Neumann problem \cite{CS} and \cite{FK}, but we shall not say anything more about this here.
\end{enumerate}
\bigskip

A further point of interest is to work with the \textquotedblleft Levi-Leray\textquotedblright \ measure $d \mu_\rho$ for the boundary of $D$, which we define as follows.
We take the linear functional
\begin{equation}\label{D:levi-leray}
   \ell (f) = \frac{1}{(2 \pi i)^n} \int_{\bndry D}\limits f(w)\, j^*\!\!\left(\partial \rho  \wedge (\overline{\partial} \partial \rho )^{n-1}\right)
\end{equation}
and write $\displaystyle \ell {f} = \int_{\bndry D}\limits\!\! f d\mu_\rho$. We then have
that $d\mu_\rho (w) = \D(w)d\sigma (w)$ where
 $\D(w) = c | \nabla\rho (w)|\det L_w(\rho)$
 via the calculation in \cite{Ra} in the case $\rho$ is of class $C^2$, and we observe that $\D(w) \approx 1$, via \eqref{E:strongly-L-pscvx-III}.\\
 
With this we have that the Cauchy-Leray integral becomes
\begin{equation}\label{Eq:8}
   \mathbf \Sz_L (f)(z) = \int_{\bndry D}\limits \frac{f(w)d\mu_\rho (w)}
   {\langle\dee\rho (w), w-z\rangle^n}
\end{equation} 

 Thus the reason for isolating the measure $d\mu_\rho$ is that 
 the coefficients of the kernel of each of 
 $\mathbf \Sz_L$ and its adjoint (computed with respect to $L^2(\bndry D, d\mu_\rho)$), are 
 $C^1$ functions in both
variables. This would not be the case if we replaced $d\mu_\rho$ by the induced Lebesgue measure $d\sigma$ (and had taken the adjoint of $\mathbf \Sz_L$ with respect to $L^2(\bndry D, d\sigma)$).\\

\indent In studying~(\ref{Eq:8}) we apply the \textquotedblleft T(1)-theorem\textquotedblright \ technique \cite{DJS}, where the underlying geometry is determined by the quasi-metric
 $$|\langle\dee\rho (w), w-z\rangle|^{\frac{1}{2}}$$
 (It is at this juncture that the notion of {\em strong}
  $\mathbb C$-linear convexity, as opposed to {\em strict}  $\,\mathbb C$-linear convexity, is required.)
In this metric, the ball centered at $w$ and reaching to $z$
has $d\mu_\rho$-measure $\approx |\langle\dee\rho (w), w-z\rangle|^n$. \\

The study of \eqref{Eq:8} also requires that we verify cancellation properties in terms of its action on \textquotedblleft bump functions.\textquotedblright \ These matters again differ from the case $n=1$, and in fact there is an unexpected favorable twist: the kernel in \eqref{Eq:8} is an appropriate derivative, as can be surmised
by the observation that on the Heisenberg group one has $\displaystyle ( | z |^2 + it)^{-n} = c^\prime \frac{d}{dt} ( | z |^2 + it )^{-n+1}, \ \text{ if } n > 1$. (However for $n=1$, the corresponding identity involves the logarithm!). Indeed by an integration-by-parts argument 
that is presented in \eqref{Eq:7} below,
 we see that when $n > 1$  and $f$ is of class $C^1$, 
\begin{equation*}
   \mathbf \Sz_L (f)(z) = c \int_{\bndry D}\limits \frac{d\,\!f(w) \wedge j^\ast (\overline{\partial} \partial\rho )^{n-1}}{\langle\dee\rho (w), w-z\rangle^{n-1}} + \mathbf E(f)(z) ,
\end{equation*}
where $$\displaystyle \mathbf E(f)(z) = \int_{\bndry D}\limits\!\! \mathcal E(z,w)f(w)\, d\sigma (w)$$
with $$\mathcal E(z,w) = O( | z-w | \ |\langle\dee\rho (w), w-z\rangle|^{-n} )$$ 

so that the operator $\mathbf E$ is a negligible term.\\

A final point is that the hypotheses of theorem \ref{T:CL-bdd-smooth} are in the nature of best possible. In fact, 
\cite{BaLa} gives examples of Reinhardt domains where the $L^2$ result for the Cauchy-Leray integral fails when a condition near $C^2$ is replaced by $C^{2-\epsilon}$, or \textquotedblleft strong\textquotedblright \ pseudo-convexity is replaced by its \textquotedblleft weak\textquotedblright \ analogue.\\

One more observation concerning the Cauchy-Leray integral is in order.  In the special case when $D$ is the unit ball $\mathbb B_1(0)$, we claim that the operators $\mathbf \Sz_{L}$ and $\mathbf\B_{L}$ agree, respectively, with the Cauchy-Szeg\"o and Bergman projections for $\mathbb B_1(0)$.
Indeed, for such domain the calculations in Section \ref{SS:spc}
 apply with $U_z=\mathbb C^n\setminus\{z\}$ 
and
\begin{equation}\label{E:def-funct-ball}
\rho(w):=|w|^2-1
\end{equation} 
and  by
 the Cauchy-Schwarz inequality we have
$\mbox{Re}\left(\langle\dee\rho (w), w-z\rangle\right)\geq |w|(|w|-|z|)\ \mbox{for any}\ \ w, z\in\mathbb C^n.$ 
Using \eqref{E:def-funct-ball}
and \eqref{E:surface-meas}\footnote{along with the following, easily verified identity: $*\dee\rho(w) =
\dee\rho(w)\wedge (\deebar\dee\rho(w))^{n-1}$.} we find that
$$
\Sz_{L}(w, z)\ =\ 
\frac{(n-1)!}{2\pi^n}\frac{d\sigma(w)}{(1-\(z, w\))^n}\ =\ \mathcal S(w, z)\, d\sigma
$$
which is the Cauchy-Szeg\"o kernel for the ball, see \eqref{E:szego-ker-disc}
Next, we observe that, again for $D=\mathbb B_1(0)$ and with $\rho$ as in 
\eqref{E:def-funct-ball}, we have that
$$
\langle\dee\rho (w), w-z\rangle - \rho(w)\ =\
1-[z, w]\quad \mbox{for any}\ \ w, z\in \mathbb C^n
$$
and from this it follows that \eqref{D:bergman-ker} now reads
$$
\B_L(w, z) \ =\  \frac{n!\, dV(w)}{\pi^n(1-\(z, w\))^{n+1}}\ =\ \mathcal B(w, z)\, dV(w)
$$
which is the Bergman kernel of the ball, see \eqref{E:bergman-ker-disc}.\\

There are three main steps in the proof of theorem \ref{T:Szego-bdd}. \\
\begin{enumerate}
   \item[\em{(i)}\ \ ] Construction of a family of bounded Cauchy Fantappi\'{e}-type integrals 
   $\mathbf \Sz_\epsilon$
   \item[]
\item[\em{(ii)}\ \ ] Estimates for $\mathbf \Sz_\epsilon - \mathbf\Sz_\epsilon^\ast$
\item[]
\item[\em{(iii)}\ \ ] Application of a variant of identity \eqref{Eq:1}\\
\end{enumerate}
{\em Step (i)}. \ The construction of $\mathbf \Sz_\epsilon$ was given in sections 
\ref{S:local-holom} through \ref{S:global-hol}, see \eqref{D:HR-integral}.
One notes that the kernel $\Sz^2_\epsilon(w, z)$ of the correction term that was produced in Section
\ref{S:correction} is \textquotedblleft harmless \textquotedblright \ since it is bounded as $(w, z)$ ranges over $\bndry D\times \overline{D}$. 
Using a methodology similar to the proof of theorem \ref{T:CL-bdd-smooth} one then shows
\begin{equation*}
   \| \mathbf \Sz_\epsilon (f) \|_{L^p} \leq c_{\epsilon , p} \| f \|_{L^p} , \quad 1 < p < \infty .
\end{equation*}
However it is important to point out, that in general the bound $c_{\epsilon , p}$ grows to infinity as $\epsilon \rightarrow 0$, so that the $\mathbf \Sz_\epsilon$ can \underline{not} be genuine approximations of $\mathbf S$. Nevertheless we shall see below that in a sense the $\mathbf \Sz_\epsilon$ gives us critical information about $\mathbf S$.\\

\noindent 
{\em Step (ii).} \ Here the goal is the following splitting:
\begin{Prop}\label{P:split}
   Given $0<\epsilon<\epsilon_0$, 
   we can write
   \begin{equation*}
      \mathbf \Sz_\epsilon - \mathbf \Sz_\epsilon^\ast= \mathbf A_\epsilon + \mathbf R_\epsilon 
   \end{equation*}
   where
   \begin{equation}\label{E:bound-essential}
   \| \mathbf A_\epsilon \|_{L^p \rightarrow L^p} \leq \epsilon c_p \ , \quad 1 < p < \infty
   \end{equation}
   and the operator $\mathbf R_\epsilon$ has a bounded kernel, hence $\mathbf R_\epsilon$ maps $L^1 (\bndry D )$ to $L^\infty (\bndry D )$.
\end{Prop}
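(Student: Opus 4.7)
The plan is to follow the Kerzman-Stein philosophy adapted to several variables: the leading singular parts of the kernels of $\mathbf{\Sz}_\epsilon$ and its adjoint must cancel by a symmetry argument, and the residual splits naturally into an $\epsilon$-small critical singular integral (the $\mathbf{A}_\epsilon$-piece) plus a genuinely bounded remainder (the $\mathbf{R}_\epsilon$-piece). Taking adjoints with respect to the Leray-Levi measure $d\mu_\rho$---equivalent to $d\sigma$ since $\mathcal{D}(w) \approx 1$ under strong pseudo-convexity---I would first decompose
\begin{equation*}
\mathbf{\Sz}_\epsilon - \mathbf{\Sz}_\epsilon^* = \bigl(\mathbf{\Sz}_\epsilon^1 - (\mathbf{\Sz}_\epsilon^1)^*\bigr) + \bigl(\mathbf{\Sz}_\epsilon^2 - (\mathbf{\Sz}_\epsilon^2)^*\bigr).
\end{equation*}
The second summand has a kernel that is bounded on $\bndry D \times \overline{D}$, by the $\deebar$-Neumann regularity of $\s$ used to construct $\Sz^2_\epsilon$ in Section \ref{S:correction}, so it can be absorbed entirely into $\mathbf{R}_\epsilon$.

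For the main summand the work is at the level of the kernel density. Taylor-expanding $\rho$ at $w$ and using $\rho(w) = \rho(z) = 0$ for $w, z \in \bndry D$, one finds
\begin{equation*}
\Delta^\epsilon(w,z) + \Delta^\epsilon(z,w) = L_w(\rho)(z-w) + O\bigl(|z-w|^3\bigr) + O\bigl(\epsilon |z-w|^2\bigr),
\end{equation*}
where the $\epsilon$-error is controlled by $|\tau^\epsilon_{j,k} - \dee^2\rho/\dee\zeta_j\dee\zeta_k| < \epsilon$. Combined with $2\,\mathrm{Re}\,\Delta^\epsilon(w,z) = L_w(\rho)(z-w) + O\bigl(|z-w|^3\bigr) + O\bigl(\epsilon|z-w|^2\bigr)$ on $\bndry D$, this yields
\begin{equation*}
\Delta^\epsilon(w,z) - \overline{\Delta^\epsilon(z,w)} = O\bigl(|z-w|^3\bigr) + O\bigl(\epsilon|z-w|^2\bigr).
\end{equation*}
Telescoping the difference of $n$-th powers and invoking $\mathrm{Re}\,\Delta^\epsilon(w,z) \gtrsim |w-z|^2$ splits the symmetrized denominator into a term of size $\epsilon|w-z|^{-2n}$ (critical order in the natural quasi-metric $d(w,z) = |\langle \dee\rho(w), w-z\rangle|^{1/2}$) and a sub-critical term of size $|w-z|^{-(2n-1)}$. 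An analogous expansion of the numerator wedge products $\dee\rho \wedge (\deebar\dee\rho)^{n-1}$, restricted to $\bndry D$, must be carried out in parallel.

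The $\epsilon$-multiple of a critical Cauchy-Fantappi\'e-type kernel obtained above is taken to be $\mathbf{A}_\epsilon$, and the bound \eqref{E:bound-essential} follows from the $T(1)$-theorem argument in the quasi-metric $d$---exactly the machinery used for Theorem \ref{T:CL-bdd-smooth}---with constants that do not deteriorate in $\epsilon$ since the underlying singular-integral structure is uniform. The sub-critical pieces, the cut-off contributions from $\{\chi_1<1\}$, and $\mathbf{\Sz}_\epsilon^2 - (\mathbf{\Sz}_\epsilon^2)^*$ are all assigned to $\mathbf{R}_\epsilon$; for this remainder to have a genuinely \emph{bounded} kernel (rather than a weakly singular one of order $|w-z|^{-(2n-1)}$), one must exploit that on $\bndry D$ the vector $z-w$ is predominantly complex-tangential, with Euclidean normal component $O(|w-z|^2)$, so that the residual singularity is in fact pointwise bounded.

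The main obstacle will be this last step: tracking the Kerzman-Stein cancellations through the full $(n,n-1)$-form structure of $\Omega_0(\eta^\epsilon)$ to upgrade ``sub-critical'' to ``bounded'' requires a careful decomposition of the numerator $\dee\rho(w) \wedge (\deebar\dee\rho(w))^{n-1}$ into complex-tangential and complex-normal components, and a matching of these against the corresponding parts of $z-w$. This is the higher-dimensional analogue of the calculation that makes the classical Kerzman-Stein kernel smooth in one variable; the use of the Leray-Levi measure $d\mu_\rho$ rather than $d\sigma$ for the adjoint is essential here, since only then do both $\Sz_\epsilon$ and $\Sz_\epsilon^*$ have $C^1$ kernel coefficients, as emphasized in Section \ref{S:main}.
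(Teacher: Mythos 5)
Your central idea --- the Kerzman--Stein symmetry estimate for the Levi polynomial, i.e.\ that $g^\epsilon(w,z)-\overline{g^{\,\epsilon}(z,w)}$ is $O(\epsilon|w-z|^2)$ modulo higher-order errors --- is exactly the engine of the paper's argument (condition \eqref{Eq:10}), and your observation that the correction term $\Sz^2_\epsilon$ has a bounded kernel and can be absorbed into $\mathbf R_\epsilon$ also matches the paper. One caveat on the estimate itself: since $\rho$ is only of class $C^2$, the Taylor remainder is $o(|w-z|^2)$, not $O(|w-z|^3)$; this is precisely why the paper states the symmetry condition in the form ``for each $\epsilon$ there is a $\delta_\epsilon$ such that the bound holds for $|w-z|<\delta_\epsilon$,'' and your writeup should be adjusted accordingly.

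The genuine gap is in how you organize the splitting. You propose to place the ``sub-critical'' pieces --- those of order $|w-z|^{-(2n-1)}$ coming from the numerator differences and the telescoping of $n$-th powers --- into $\mathbf R_\epsilon$, and you correctly identify that these are weakly singular rather than bounded; but the fix you suggest (exploiting that $z-w$ is predominantly complex-tangential) cannot upgrade a kernel of order $|w-z|^{1-2n}$ to a bounded one on a $(2n-1)$-dimensional boundary, and the proposition requires $\mathbf R_\epsilon$ to have a genuinely bounded kernel so that it maps $L^1$ to $L^\infty$. These sub-critical terms also carry no factor of $\epsilon$, so as you have set things up they cannot be put into $\mathbf A_\epsilon$ either. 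The paper sidesteps this entirely by splitting with a spatial cutoff: $\mathbf A_\epsilon$ is the \emph{full} kernel $H_\epsilon$ of $\mathbf \Sz_\epsilon-\mathbf \Sz_\epsilon^{\ast}$ restricted to $|w-z|<\delta_\epsilon$, and $\mathbf R_\epsilon$ is the remainder, which is trivially bounded (with a bound that may blow up as $\epsilon\to 0$ --- this is explicitly allowed). On the near-diagonal piece the critical part of $H_\epsilon$ gains the factor $\epsilon$ from \eqref{Eq:10} and is handled by the $T(1)$ machinery, while the sub-critical parts contribute operator norm $O(\delta_\epsilon)$ simply because their support has been shrunk; both therefore land inside the bound \eqref{E:bound-essential} once $\delta_\epsilon$ is taken small enough. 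You should reorganize your decomposition along these lines rather than attempting the tangential-component analysis, which is neither needed nor sufficient.
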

\indent We note that in fact the bound of the kernel of $\mathbf R_\epsilon$ may grow to infinity as $\epsilon \rightarrow 0$. \\

To prove proposition \ref{P:split} we first verify an important \textquotedblleft symmetry\textquotedblright \ condition: for each $\epsilon$, there is a $\delta_\epsilon$, so that 
\begin{equation}\label{Eq:10}
  | g^\epsilon (w, z) - \overline{g^{\,\epsilon}} (z, w) |\ \leq\ \epsilon\, c\, | w -z|^2 ,      
  \quad \mbox{if}\quad | w-z | < \delta_\epsilon\, .                                                            \end{equation} 

Here $g_\epsilon (w, z)$ is as in \eqref{D:g-eps}.
With this one proceeds as follows. Suppose $H_\epsilon (z, w)$ is the kernel of the operator $\mathbf \Sz_\epsilon - \mathbf \Sz_\epsilon^\ast$. Then we take $\mathbf A_\epsilon$ and 
$\mathbf R_\epsilon$ to be the operators with kernels respectively $\chi_\delta (w-z) H_\epsilon (w, z)$ and $(1 - \chi_\delta (w-z)) H_\epsilon (w, z)$, where $\chi_\delta (w-z)$
is as in \eqref{D:g-eps}
  and $\delta = \delta_\epsilon$, chosen acccording to 
 \eqref{Eq:10}.\\

\noindent
  {\em Step (iii). }\  We conclude the proof of theorem \ref{T:Szego-bdd} by using an identity similar to \eqref{Eq:1}:
\begin{equation*}\label{Eq:11}
   \mathbf S (\mathbf I - (\mathbf \Sz_\epsilon^\ast - \mathbf \Sz_\epsilon )) = \mathbf \Sz_\epsilon \, 
\end{equation*} 
Hence 
$$\mathbf S(\mathbf I - \mathbf A_\epsilon ) = \mathbf \Sz_\epsilon + \mathbf S
\mathbf R_\epsilon$$
Now for each $p$, take $\epsilon > 0$ so that for the bound $c_p$ as in \eqref{E:bound-essential}
$$\epsilon\, c_p \leq \frac{1}{2}.$$
Then $\mathbf I - \mathbf A_\epsilon$ is invertible and we have
\begin{equation*}
   \mathbf S = \left( \mathbf \Sz_\epsilon + \mathbf  S\mathbf R_\epsilon \right)
    \left( \mathbf I - \mathbf A_\epsilon \right)^{-1}
\end{equation*}
Since $(\mathbf I - \mathbf A_\epsilon )^{-1}$ is bounded on $L^p$, and also $\mathbf \Sz_\epsilon$, it sufficies to see that $\mathbf S\mathbf R_\epsilon$ is also bounded on $L^p$. Assume for the moment that $p \leq 2$. Then since $\mathbf R_\epsilon$ maps $L^1$ to $L^\infty$, it also maps $L^p$ to $L^2$ (this follows from the inclusions of Lebesgue spaces, which hold in this setting because $D$ is bounded), while $\mathbf S$ maps $L^2$ to itself, yielding the fact that $\mathbf S\mathbf R_\epsilon$ is bounded on $L^p$. The case $2 \leq p$ is obtained by dualizing this argument.\\

The proof of theorem \ref{T:Bergman-bdd} can be found in \cite{LS-2}: it has an outline similar
to the proof of theorem \ref{T:Szego-bdd} with the operators $\mathbf \B_\epsilon$, see \eqref{D:HRL-op}, now in place of the $\mathbf\Sz_\epsilon$, but the details are simpler since we are dealing with operators that converge absolutely (as suggested by corollary \ref{C:abs-Bergman}). Thus one can avoid the delicate $T(1)$-theorem machinery and make instead absolutely convergent integral estimates. 
\bigskip

\section{the Cauchy-Leray integral revisited}\label{S:last}

 For domains with
  boundary regularity
    below the $C^2$ category there is no canonical
    notion of strong pseudo-convexity - much less a working
analog of the Cauchy-type operators $\mathbf\Sz_\epsilon$
and $\mathbf\B_\epsilon$ that were introduced in the previous sections. By contrast, the Cauchy-Leray integral can be defined for 
less regular domains, but the 
definitions and the proofs are substantially more delicate than the
 $C^2$ framework of theorem \ref{T:CL-bdd-smooth}.
 \begin{Def}\label{D:domain-C-1-1}
 Given a bounded domain 
$D\subset \mathbb C^n$, we say that {\em $D$ is of class $C^{1, 1}$} if $D$ has a defining
function (in the sense of definition \ref{D:domain-C-k}) that is of class $C^{1,1}$ in a neighborhood  $U$ of $\bndry D$; that is, $\rho$ is of class $C^1$ and its (real) partial derivatives 
$\dee\rho/\dee x_j$ 
are Lipschitz functions with respect to the Euclidean distance in $\mathbb C^n \equiv \mathbb R^{2n}$:
$$
\left|\frac{\dee\rho}{\dee x_j}(w) - \frac{\dee\rho}{\dee x_j}(\zeta)\right| \leq C|w-\zeta|\quad
w, \zeta\in U,\ \ j=1,\ldots, 2n.
$$
\end{Def}\smallskip

\begin{Thm}\label{T:CL-bdd-C11}
   Suppose $D$ is a bounded domain of class $C^{1,1}$ which is strongly $\mathbb{C}$-linearly convex. Then there is a natural definition of the Cauchy-Leray integral \eqref{D:CL-smooth}, 
     so that the mapping $f \mapsto \mathbf \Sz_L(f)$ initially defined for $f \in C^1 (\bndry D )$, extends to a bounded operator on $L^p (\bndry D, d\sigma)$ for $1 < p < \infty$. 
\end{Thm}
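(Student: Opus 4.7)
The plan is to extend Theorem \ref{T:CL-bdd-smooth} to the $C^{1,1}$ setting by (i) making sense of the Cauchy-Leray integral at this reduced regularity, (ii) preserving the underlying homogeneous-space structure, and (iii) adapting the $T(1)$ machinery of the $C^2$ proof. First I would verify that the natural kernel
$$\Sz_L(w,z) = \frac{(n-1)!}{(2\pi i)^n}\cdot\frac{j^{*}\!\left(\dee\rho\,\wedge\,(\deebar\dee\rho)^{n-1}\right)}{\langle\dee\rho(w),\,w-z\rangle^n}$$
remains meaningful: since $\rho\in C^{1,1}$, Rademacher's theorem gives that $\dee^{2}\rho/\dee\zeta_j\dee\overline\zeta_k$ exists a.e.\ and lies in $L^\infty(U)$, so $\deebar\dee\rho$ is a well-defined bounded $(1,1)$-form and $j^{*}\!(\dee\rho\wedge(\deebar\dee\rho)^{n-1})$ is an $L^\infty$-density on $\bndry D$. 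Writing $d\mu_\rho=\mathcal{D}(w)\,d\sigma(w)$ as in \eqref{D:levi-leray}, one now has $\mathcal D\in L^\infty(\bndry D)$ with $\mathcal D\gtrsim 1$ a.e., the latter being a direct consequence of strong $\mathbb C$-linear convexity; in particular $L^p(\bndry D,d\sigma)$ and $L^p(\bndry D, d\mu_\rho)$ are comparable.

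Next I would confirm that $(\bndry D,\ |\langle\dee\rho(w),w-z\rangle|^{1/2},\ d\mu_\rho)$ is a space of homogeneous type. The two-sided estimates on the balls in this quasi-metric and the doubling property require only first-order information on $\rho$ together with the strong $\mathbb C$-linear convexity inequality \eqref{E:strongly-C-lin-cvx}; the upper bound $|\langle\dee\rho(w),w-z\rangle|\lesssim |w-z|(|\nabla\rho(w)| + \mathrm{Lip}(\nabla\rho)\,|w-z|)$ is a first-order Taylor estimate that uses exactly the Lipschitz regularity of $\nabla\rho$ and no more. So this structural setup transfers to $C^{1,1}$ with bounds depending only on the Lipschitz constant of $\nabla\rho$ and the $\mathbb C$-linear convexity constant.

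Then I would reproduce the integration-by-parts identity
$$
\mathbf{\Sz}_L(f)(z)\ =\ c\!\int_{\bndry D}\!\!\frac{d\,f(w)\wedge j^{*}(\deebar\dee\rho)^{n-1}}{\langle\dee\rho(w),w-z\rangle^{n-1}}\,+\,\mathbf{E}(f)(z)
$$
by first establishing it for smooth approximations $\rho_k\to\rho$ in $C^{1,\alpha}$ (any $\alpha<1$) via standard mollification, applying Stokes' theorem on $\bndry D_k$ at the smooth level, and passing to the limit using the uniform $L^\infty$-bound on $\deebar\dee\rho_k$ together with dominated convergence. The resulting reduced operator has a kernel that is weakly singular of order $2n-1$ (in real dimension), and the error term $\mathbf{E}$ has integrable kernel $O(|w-z|\,|\langle\dee\rho,w-z\rangle|^{-n})$. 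The two cancellation conditions, namely $\mathbf{\Sz}_L(1),\,\mathbf{\Sz}_L^{*}(1)\in\mathrm{BMO}$ relative to the quasi-metric structure, follow as in the $C^2$ case, since they depend only on the size of $\mathbf{E}$ and on the fact that $d(1)=0$ annihilates the principal term.

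The main obstacle is the reduced regularity of the kernel itself. The classical $T(1)$ theorem on a space of homogeneous type requires a H\"older-type smoothness of the kernel in each variable, and in the $C^{2}$ setting this came from the $C^{1}$ character of the Levi-Leray density together with the holomorphicity of $\langle\dee\rho(w),w-z\rangle^{-n}$ in $z$. Here the density $\mathcal D$ is merely $L^\infty$ and no pointwise smoothness in $w$ can be expected. The key will be to invoke a variant of the $T(1)$ theorem that asks only for smoothness in the $z$-variable (which is available in closed form, since the $z$-dependence is still holomorphic and polynomial-in-denominator) together with an averaged or weak kernel regularity in $w$, in the spirit of the David–Journ\'e–Semmes framework. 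Equivalently, one can carry out the approximation strategy by proving a quantitative version of Theorem \ref{T:CL-bdd-smooth} with $L^p$-bounds depending only on $\mathrm{diam}(D)$, $\|\nabla\rho\|_{\mathrm{Lip}}$, and the strong $\mathbb C$-linear convexity constant, and then applying this uniformly to $\rho_k$ before passing to the limit. Either route reduces the $C^{1,1}$ theorem to tracking the quantitative dependence of the $C^2$ proof on its geometric constants, and this tracking, together with the verification that the $T(1)$-cancellation is not degraded by the loss of one derivative, constitutes the essential technical work.
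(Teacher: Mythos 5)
Your overall architecture (the Levi--Leray measure, the quasi-metric $|\langle\dee\rho(w),w-z\rangle|^{1/2}$, the integration-by-parts reduction of the order of the singularity, and a $T(1)$-type argument with constants tracked through a smooth approximation) is consistent with what the paper indicates; the detailed $L^p$ proof is in any case deferred to a separate paper. But there is a genuine gap at the very first step, and it is precisely the point the paper singles out as the main difficulty. Rademacher's theorem gives that the second derivatives of $\rho$ exist a.e.\ with respect to $2n$-dimensional Lebesgue measure and lie in $L^\infty(U)$; this does \emph{not} make $j^*(\dee\rho\wedge(\deebar\dee\rho)^{n-1})$ meaningful, because $\bndry D$ has $2n$-dimensional Lebesgue measure zero, so an a.e.-defined $L^\infty$ function on $U$ has no trace on $\bndry D$ (a $C^{1,1}$ function need not be twice differentiable at any point of a prescribed hypersurface). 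The same problem undermines your limiting argument: mollification gives convergence of $\nabla^2\rho_k$ a.e.\ with respect to Lebesgue measure in $\mathbb C^n$, which says nothing about pointwise convergence $d\sigma$-a.e.\ on $\bndry D$, so the appeal to dominated convergence in the boundary integrals is not justified as stated.

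The paper's resolution is a restriction principle: only the \emph{tangential} part of the Hessian enters the pullback, and tangential second derivatives can be given a meaning $\sigma$-a.e.\ on $\bndry D$ (in effect by applying Rademacher on the $(2n-1)$-dimensional manifold to the Lipschitz field $\nabla\rho$ restricted to $\bndry D$). Concretely, one shows that there is a \emph{unique} $2$-form $j^*(\deebar\dee\rho)$ with $L^\infty(d\sigma)$ coefficients satisfying the integration-by-parts identity \eqref{Eq:7} against all $C^1$ test forms $\Psi$ of degree $2n-3$, and that for a suitable smoothing sequence, uniformly bounded in $C^{1,1}$, the tangential Hessians converge $d\sigma$-a.e.\ on $\bndry D$ with a limit independent of the approximating sequence. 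This approximation lemma is the substantive new ingredient of the $C^{1,1}$ case; your proposal assumes its conclusion without supplying it, and the subsequent claims ($\mathcal D\gtrsim 1$ a.e.\ on $\bndry D$, passage to the limit in the reduced kernel, the cancellation conditions) all depend on it.
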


Note that in comparison with theorems \ref{T:Szego-bdd} and \ref{T:Bergman-bdd}, here our hypotheses about the nature of convexity are stronger, but the regularity of the boundary is weaker.\\

First, we explain the main difficulty in defining the Cauchy-Leray integral in the case of $C^{1,1}$ domains. It arises from the fact that the definitions \eqref{E:szego} and  \eqref{D:levi-leray} involve {\em second} derivatives of the defining function $\rho$. However $\rho$ is only assumed to be of class $C^{1,1}$, so that these derivatives are $L^{\infty}$ functions on $\mathbb{C}^n$, and as such not defined on $\bndry D$ which has 2$n$-dimensional Lebesgue measure zero.
What gets us out of this quandary is that here in effect not all second derivatives are involved but only those that are \textquotedblleft tangential\textquotedblright \ to $\bndry D$. Matters are made precise by the following \textquotedblleft restriction\textquotedblright \ principle and its variants. \\

Suppose $F \in C^{1,1}(\mathbb{C}^n )$ and we want to define $\displaystyle \overline{\partial} \partial F \bigg|_{\bndry D}$. We note that if $F$ were of class $C^2$ we would have 
\begin{equation}\label{Eq:7}
       \int_{\bndry D}\limits j^\ast (\overline{\partial} \partial F) \wedge \Psi = - \int_{\bndry D}\limits j^\ast (\partial F) \wedge d \Psi ,                                                                                                                                                \end{equation} 
where $\Psi$ is any $2n - 3$ form of class $C^1$, and here $j^\ast$ is the induced mapping to forms on $\bndry D$. 
\begin{Prop}
   For $F \in C^{1,1} (\mathbb{C}^n )$, there exists a unique 2-form $j^\ast (\overline{\partial} \partial F)$ in $\bndry D$ with $L^\infty (d \sigma )$ coefficients so that \eqref{Eq:7} holds.\\
\end{Prop}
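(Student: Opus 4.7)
I would dispense with uniqueness first: if two 2-forms $\omega_1,\omega_2$ on $\bndry D$ with $L^\infty(d\sigma)$ coefficients both satisfy \eqref{Eq:7} against every $C^1$ test form $\Psi$ of degree $2n-3$, subtracting gives
$\int_{\bndry D}(\omega_1-\omega_2)\wedge \Psi = 0$ for every such $\Psi$. Working in a local frame on $\bndry D$, the wedge $(\omega_1-\omega_2)\wedge \Psi$ can be arranged to single out any prescribed coefficient of $\omega_1-\omega_2$ against an arbitrary $C^1$ scalar multiplier. Since $C^1(\bndry D)$ is dense in $L^1(\bndry D,d\sigma)$, the duality $L^1(d\sigma)^*=L^\infty(d\sigma)$ forces $\omega_1=\omega_2$ almost everywhere.

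\textbf{Existence.} The plan is to regularize $F$ by convolution with a radial mollifier $\phi_\epsilon$, producing $F_\epsilon := F*\phi_\epsilon \in C^\infty$ on a fixed neighborhood of $\overline D$ (for $\epsilon$ small). Because $F \in C^{1,1}(\mathbb C^n)$, the second derivatives $\partial^2F/\partial\zeta_j\partial\overline\zeta_k$ exist almost everywhere as bounded measurable functions, and the corresponding coefficients of $\deebar\partial F_\epsilon$ are dominated uniformly in $\epsilon$ by $\|F\|_{C^{1,1}}$ on that neighborhood. Their pullbacks to $\bndry D$ are then continuous and uniformly bounded in $L^\infty(\bndry D,d\sigma)$, so by Banach-Alaoglu I can extract a subsequence for which $j^*(\deebar\partial F_\epsilon)$ converges weak-$*$ coefficient-by-coefficient to a 2-form $\omega$ with $L^\infty(d\sigma)$ coefficients. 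For each fixed $C^1$ test form $\Psi$ of degree $2n-3$, identity \eqref{Eq:7} holds classically for the smooth $F_\epsilon$; the right-hand side passes to the limit because $F_\epsilon \to F$ in $C^1$ uniformly on a neighborhood of $\bndry D$ (so $j^*(\partial F_\epsilon) \to j^*(\partial F)$ uniformly), while the left-hand side passes to the limit by weak-$*$ convergence. This yields \eqref{Eq:7} for $\omega$, and then $j^*(\deebar\partial F) := \omega$ is the required form; uniqueness shows it is independent of the mollifying sequence.

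\textbf{Main obstacle.} The delicate point is the uniform $L^\infty$ bound for $j^*(\deebar\partial F_\epsilon)$ on $\bndry D$. Because $\bndry D$ has measure zero in $\mathbb C^n$, pointwise values of the merely a.e.-defined second derivatives of $F$ have no intrinsic meaning, so one must argue purely at the mollified level. In local coordinates the coefficients of $j^*(\deebar\partial F_\epsilon)(w)$ are linear combinations of $\partial^2F_\epsilon/\partial\zeta_j\partial\overline\zeta_k(w)$ whose coefficients are polynomial in the first derivatives of $\rho$; since $\rho\in C^{1,1}$, these multipliers are bounded uniformly on $\bndry D$, and $\|\partial^2F_\epsilon/\partial\zeta_j\partial\overline\zeta_k\|_{L^\infty}\le\|\partial^2F/\partial\zeta_j\partial\overline\zeta_k\|_{L^\infty}$ because mollification is a contraction on $L^\infty$. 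This gives the $\epsilon$-independent bound that drives the weak-$*$ compactness argument. The fact that only \emph{tangential} combinations of second derivatives ultimately survive in $\omega$ is reflected precisely in the right-hand side of \eqref{Eq:7}, which involves only $\partial F$ and $d\Psi$ along $\bndry D$, and this is what makes the construction robust in the $C^{1,1}$ category.
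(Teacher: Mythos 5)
Your argument is correct, and it reaches the proposition by a mechanism that is genuinely different from the one the paper sketches at the decisive step. Both you and the paper regularize $F$ by smooth functions with uniformly bounded $C^{1,1}$ norms, apply the classical Stokes identity \eqref{Eq:7} to the approximants, and pass to the limit using the uniform convergence of the first derivatives on $\bndry D$ for the right-hand side. The difference is in how the left-hand side converges: the paper's approximation lemma asserts that the \emph{tangential} Hessians $\triangledown_T^2 F_k$ converge $d\sigma$-almost everywhere on $\bndry D$ (and that the limit is independent of the approximating sequence), which is the delicate point, since a.e.\ statements in $\mathbb C^n$ say nothing about a set of $2n$-dimensional measure zero such as $\bndry D$; you instead invoke only the uniform $L^\infty(d\sigma)$ bound on $j^*(\deebar\dee F_\epsilon)$, extract a weak-$*$ limit by sequential Banach--Alaoglu, and let the uniqueness half of the proposition (the nondegeneracy of the wedge pairing of $2$-forms against $C^1$ forms of degree $2n-3$, plus density of $C^1$ in $L^1(d\sigma)$) show that the limit is independent of the subsequence and of the mollifier. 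Your route is more elementary and fully sufficient for the proposition as stated. What it does not deliver, and what the paper's stronger lemma is really after, is a pointwise $d\sigma$-a.e.\ identification of the tangential second derivatives $\triangledown_T^2 F$ on $\bndry D$: that pointwise object is needed later (for instance to define the Leray--Levi density $\D(w)=c\,|\nabla\rho(w)|\det L_w(\rho)$ and verify $\D(w)\approx 1$, and to run kernel estimates for the Cauchy--Leray integral on $C^{1,1}$ domains), so the paper proves more than the bare existence-and-uniqueness statement requires, at the cost of a harder approximation lemma.
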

\indent This is a consequence of an approximation lemma: There is a sequence $\{ F_n \}$ of $C^\infty$ functions on $\mathbb{C}^n$, that are uniformly bounded in the $C^{1,1} (\mathbb{C}^n )$ norm, so that $F_k \rightarrow F$ and $\triangledown F_k \rightarrow \triangledown F$ uniformly on $\bndry D$, and moreover $\triangledown_T^2 F_n$ converges $(d\sigma )$ a.e. on $\bndry D$. Here $\triangledown_T^2 F$ is the \textquotedblleft tangential\textquotedblright \ restriction of the Hessian $\triangledown^2 F$ of $F$. Moreover the indicated limit, which we may designate as $\triangledown_T^2 F$, is independent of the approximating sequence $\{ F_n \}$.\\

\end{document}